\title{Extending wavelet regularity beyond Gevrey classes}
\author{Filip Tomić, Stefan Tuti\'c, Milica \v Zigi\'c}
\date{}
\def\be{\begin{equation}}
\def\ee{\end{equation}}
\def\t{\tau}
\def\s{\sigma}
\def\dss{\displaystyle}
\newcommand{\ctg}{\operatorname{ctg}}
 \def\E{\mathcal{E}}
\newtheorem{te}{Theorem}[section]
\newtheorem{lema}[te]{Lemma}
\newtheorem{prop}[te]{Proposition}
\newtheorem{rem}[te]{Remark}
\newtheorem{ex}[te]{Example}
\newtheorem{de}[te]{Definition}%[section]
\newtheorem*{theoremnonumber}{Theorem}
\begin{document}
\maketitle
%\section{Abstract}
\begin{abstract}
We present a general construction of smooth orthonormal wavelet $\psi$ which, together with its Fourier transform $\widehat{\psi}$ belongs to the extended Gevrey class $\mathcal{E}_{\sigma}(\mathbb{R})$ for $\sigma > 1$, providing an example that lies beyond all classical Gevrey classes. Our approach uses the idea of invariant cycles to extend the initial Lemari\'e-Meyer support of the low-pass filter $m_0$ from $ [-\frac{2\pi}{3}, \frac{2\pi}{3}]$ to $ [-\frac{4\pi}{5}, \frac{4\pi}{5}]$. This extension allows precise control of the the decay rate of $m_0$ near $\frac{2\pi}{3}$, which yields global decay estimates for $\psi$ and $\hat\psi$. In addition, the decay rates are described using special functions involving the Lambert $W$ function, which plays an important role in our construction.
\end{abstract}

%We construct a non band-limted orthonormal wavelet that belongs to an extended Gevrey class in both the time and frequency domains. We use the idea of invariant cycles to extend the initial Lemari\'e-Meyer support of the low-pass filter. This allows us to control regularity and decay of the wavelet in the Fourier domain. In addition, the decay rates of the wavelet and its Fourier transform are characterized using the Lambert W function, which plays an important role in our construction.

\section{Introduction}

%Wavelet theory provides a powerful framework for the analysis and representation of signals. One of the main tools in time-frequency analysis, signal analysis and engineering is the wavelet series expansion. This approach expresses a signal in terms of wavelet coefficients, which serve as its discrete representation. In this way, it is possible to analyze the signal in both time and frequency, by examining the decay rate of its coefficients.

In this paper, we present a construction of smooth, non-band-limited  orthonormal wavelets $\psi$ whose regularity is weaker than Gevrey regularity both in time and in frequency. This is achieved by controlling the decay of the low-pass filter $m_0$
near the invariant cycle points $\pm 2\pi/3$ \cite{Fukuda}. For this purpose, we use certain flat functions and their extensions to complex domain, see \cite{Javier01, Javier02, Javier03}.
Our methods differ from those used in \cite{TT-02}
where band-limited  wavelets were constructed instead.

\par

The notion of a \emph{multiresolution analysis} (MRA) is one of the fundamental concepts in wavelet theory, introduced by Y.~Meyer~\cite{Meyer} and later developed by S.~Mallat~\cite{Mallat}. MRA is based on a hierarchical family of closed subspaces of $L^2(\mathbb R)$ and involves a \emph{scaling function} $\varphi$, which plays a central role in construction of the orthonormal wavelet bases. Under certain technical assumptions, its Fourier transform $\hat\varphi$ can be represented as an infinite product constructed from a $2\pi$-periodic function, known as the low-pass filter. More precisely, if $\varphi$ is a scaling function and $m_0$ the associated low-pass filter, then  
\begin{equation}
\label{OdnosM0iPhi}
\hat{\varphi}(\xi)=\prod_{j=1}^{\infty} m_0(2^{-j}\xi), \qquad \xi\in\mathbb R,
\end{equation}
and the Fourier transform of the corresponding MRA wavelet $\psi$ is given by  
\begin{equation}
\label{MRAWavelet}
\hat{\psi}(\xi)= e^{i \xi/2}\, \overline{m_0\!\left(\frac{\xi}{2}+\pi\right)}\, \hat{\varphi}\!\left(\frac{\xi}{2}\right), \qquad \xi\in\mathbb R.
\end{equation} In such framework, it is therefore sufficient to construct a low-pass filter with the desired regularity properties, which are then inherited by $\hat{\varphi}$ and $\hat{\psi}$ through \eqref{OdnosM0iPhi} and \eqref{MRAWavelet}. The regularity of $\varphi$ and $\psi$ is then given by Paley--Wiener type theorems, which relate the smoothness of a function to the decay of its Fourier transform. For a more detailed exposition of the general theory of wavelets and their applications, we refer to classical textbooks \cite{Dau, HW, MeyerBook}.

We shall use invariant cycles $\left\{ -\frac{2\pi}{3}, \frac{2\pi}{3}\right\}$ and $\left\{ -\frac{2\pi}{5}, \frac{2\pi}{5}, - \frac{4\pi}{5}, \frac{4\pi}{5}\right\}$ to extend the support of the low-pass filter $m_0$ on the interval $[-\pi,\pi)$. For this purpose, let us recall the definition of an invariant cycle (see \cite{HW1}).

%Examples of invariant cycles are $\left\{\pm \frac{2\pi}{3}\right\}$, generated by $\xi_0 = \frac{2\pi}{3}$, 
%and $\left\{\pm \frac{2\pi}{5}, \pm \frac{4\pi}{5}\right\}$, generated by $\xi_0 = \frac{2\pi}{5}$.

\begin{de}
    Let $\rho: [-\pi,\pi)\to [-\pi,\pi): \xi\mapsto2\xi \ (mod \, 2\pi$). For $\xi\in [-\pi,\pi)$, if there exists $l\in\mathbb{N}$ such that $\rho^{l} (\xi) = \xi$, we call the corresponding orbit $\mathcal{O}(\xi)=\{ \xi,\rho(\xi),\dots, \rho^{l-1} (\xi) \}$ an invariant cycle of length $l$, generated by $\xi$.
\end{de} 
The notion of invariant cycles was introduced by A.~Cohen to characterize orthonormality conditions for wavelets (see \cite{Dau}). 
In \cite{HW1}, it is proved that for a band-limited MRA wavelet, 
$\prod_{\xi\in \mathcal{O}(\xi)} m_0(\xi) = 0$ for every invariant cycle $\mathcal{O}(\xi)$, and furthermore the authors constructed a smooth low-pass filter with $m_0(\pm 2\pi/3)\neq 0$, 
yielding a non-band-limited wavelet regular only up to a finite order. 
In contrast, Lemarié–Meyer (see \cite{HW}) use smooth low-pass filter that vanishes in a neighborhood of $\pm 2\pi/3$, making the corresponding scaling function $\varphi$ 
and wavelet $\psi$ band-limited and thus analytic in time. In our construction,
$m_0(\pm 2\pi/3) = 0$ and $m_0(\xi) \neq 0$ in a neighborhood of $\pm 2\pi/3$
($\xi \neq \pm 2\pi/3$), yielding to non-band-limited  
scaling function $\varphi$ and wavelet $\psi$ with extended Gevrey regularity.

%supported in $[-2\pi/3,2\pi/3]$ on $[-\pi,\pi)$
%(so that $m_0$ ), 

%The regularity of the wavelet in time domain is then measured by the  Paley–Wiener theorems, which links the regularity of a (scaling) function with the decay of its Fourier transform. For more details about general theory of wavelets and its applications we refer to \cite{Dau, HW}. 

%Recently, MRA approach has been used in \cite{Rakic} for the construction of wavelets in the spaces of \emph{Gelfand-Shilov type}. 

%Paley–Wiener theorems characterize the relationship between the regularity of a function (or a distribution) and the decay properties of its Fourier transform. In the theory of ultradifferentiable functions, where the smoothness is described by sequences of positive numbers $M_p$, $p\in \mathbb N_0$, they are linking the regularity of a function with the decay of its Fourier transform through the associated function $T_{M_p}(k)$, defined as
%$$T_{M_p}(k)=\sup_{p\in \mathbb{N}} \ln \frac{k^p}{M_p},\quad k>0.$$

It is well known that a smooth orthonormal wavelet cannot have exponential decay
(see \cite{DH}). Therefore, constructing a smooth wavelet  whose decay is slower than exponential appears to be a challenging task (see \cite{Hernandez, Moritoh}). An example of such a wavelet can be found in \cite{Fukuda}, where the authors employ Gevrey regularity and use the translates of the Gevrey function $e^{-|\xi|^{-\frac{1}{s-1}}}$, $s>1$, to control the decay rate of the low-pass filter near $\frac{2\pi}{3}$.

We employ the ideas from \cite{Fukuda} and consider functions of the form  
\begin{align}
\label{BazicnaFunkcija}
\begin{split}
&f_{\rho , \sigma} (x) = e^{- \rho\, g_{\sigma} \left(1/x\right)}, \qquad  \rho>0,\; \sigma>1, \; x\not=0,\\
&\text{where}\\
&g_{\sigma}(x) = \omega_{\sigma}\!\left( \ln \left(1+|x|\right) \right),\\
&\omega_{\sigma}(x) = x^{\frac{\sigma}{\sigma - 1}} \big/ W^{\frac{1}{\sigma - 1}}(x),\quad \omega_{\sigma}(0)=0,
\end{split}
\end{align}
where $W$ denotes the principal branch of the Lambert $W$ function (see subsection \ref{subsec:preliminaries}). This approach yields a new class of non-band-limited orthonormal wavelets whose decay rate is faster than polynomial but slower than subexponential. Moreover, its regularity in time and frequency domains is captured by the \emph{extended Gevrey classes}.

Extended Gevrey classes of locally smooth functions were introduced in \cite{PTT-01}. They contain the union of all Gevrey classes and therefore describe an intermediate regularity between Gevrey and $C^{\infty}$. The growth of the derivatives of their elements is controlled by the sequence $\{M^{\tau,\sigma}_p\}_{p \in \mathbb{N}_0}$, where
\begin{equation}
\label{DefinicijaNiza}
M^{\tau,\sigma}_p = p^{\tau p^{\sigma}},\quad p \in \mathbb{N},\qquad 
M_0^{\tau,\sigma} = 1,\qquad 
\tau > 0,\ \sigma > 1.
\end{equation}

Since the sequences in \eqref{DefinicijaNiza} do not satisfy Komatsu's condition 
$$\displaystyle (M.2)\quad (\exists\, C>0)\ M_{p+q} \leq C^{p+q+1} M_p M_q,\quad p,q\in \mathbb N,$$ for any choice of $\tau>0$ and $\sigma>1$, the extended Gevrey classes cannot be treated within the classical framework of ultradifferentiable function theory. Nevertheless, it has recently been shown that extended Gevrey classes form an important example of \emph{weight matrix classes}, where regularity is governed by a family of defining sequences (see \cite{Javier01, TT-01}). Applications of the theory can be found in \cite{CL, Javier02, Javier03, Lastra}, where extended Gevrey classes are referred to as PTT-spaces.

%Moreover, their associated function can be estimated in terms of the functions $f_{\rho,\sigma}$ defined in \eqref{BazicnaFunkcija} (see Proposition \ref{TeoremaAsocirana}). 

The main purpose of this paper is to establish new regularity properties of orthonormal wavelets that go beyond the classical theory of ultradifferentiable functions. We refine the result of \cite{TT-02} on band-limited wavelets in extended Gevrey settings by controlling the decay and smoothness of the scaling function in the Fourier domain. At the same time, we obtain a wavelet that is less regular than the one constructed in \cite[Theorem~4.4]{Fukuda}; equivalently, its regularity lies closer to $C^{\infty}$ (see Lemma~\ref{PoslednjaLema}).

Paper is organized as follows: In Subsection \ref{subsec:preliminaries} we fix the notation and discuss the main properties of the Lambert $W$ function. In Section~\ref{subsec:regularityclasses}, we introduce the extended Gevrey classes $\E_{\sigma}(\mathbb R)$, see Definition \ref{definicijaKlase}, discuss their defining sequences \( M^{\tau,\sigma}_p \) and corresponding associated functions. In particular, we prove that functions $f_{\rho,\sigma}$ given in \eqref{BazicnaFunkcija} belong to $\E_{\sigma}(\mathbb R)$, see Theorem \ref{PrimerExtGevrey} and Proposition \ref{proizvoljnorho}. In Section \ref{section:waveletconstruction} we construct the desired wavelet from the low-pass filter $m_0$, by using $f_{\rho,\sigma}$, and prove Theorem \ref{MainResult} as our main result. This implies the following particular result:

\begin{theoremnonumber}
%\label{MainResult}
There exists an orthonormal wavelet $\psi$ such that for arbitrary $\eta>1$
$$\displaystyle \psi\in \E_{2}(\mathbb R)\backslash \bigcup_{1<\sigma'<\frac{2+\eta}{1+\eta}}\E_{\sigma'}(\mathbb R) \quad{\rm and}\quad \displaystyle \hat\psi\in \E_{2}(\mathbb R)\backslash \bigcup_{1<\sigma'<2}\E_{\sigma'}(\mathbb R).$$ 
\end{theoremnonumber} 

To illustrate our construction, we present several graphs of ONW given by Theorem \ref{MainResult} in Subsection \ref{secIlustracije}.

\subsection{Preliminaries}
\label{subsec:preliminaries}

Throughout the paper we use the following notation: $\mathbb{N}$, $\mathbb{N}_0$, $\mathbb{R}_+$, $\mathbb{R}$ and $\mathbb{C}$
denote the sets of natural numbers, non-negative integers, positive real numbers,
real numbers, and complex numbers, respectively. We write $f \asymp g$ to denote that the two functions are
\emph{asymptotically equivalent}, meaning that $f = O(g)$ and $g = O(f)$
as $x \to \infty$ (here $f = O(g)$, $x \to \infty$, means that
$f(x) \leq L\,(g(x) + 1)$ for some $L \geq 1$ and all $x \geq 0$). By $f \prec g$ we mean that $f = o(g)$ as $x \to \infty$, that is,
$f(x)/g(x) \to 0$ as $x \to \infty$. We also write $f\sim g$ when $f(x)/g(x) \to 1$ as $x \to \infty$.

With $\displaystyle{\rm coz} \,f=\{x_0\in \mathbb R\,|\, f(x_0)\not=0\}$ we denote a \emph{cozero} set (complement of a zero set) of the continuous function $f$. Then the \emph{support} of $f$ is ${\rm supp}\, f=\overline{{\rm \operatorname{coz}} \,f}$, where $\overline{X}$ denotes the closure of the set $X$. The \emph{interior} of the set $X$ is denoted by ${\rm int}\,X$. We write $\mathbb K\subset\subset  U$ when $\mathbb K$ is a compact subset of an open set $U$.

%there exists $A_1,A_2>0$ and $B_1,B_2\in \mathbb R$ such that $f(x)\leq A_1 g(x)+B_1$ and $g(x)\leq A_2 f(x)+B_2$ for all $x\in \mathbb R$.

Let $\nu$ be a non-negative, continuous, increasing and even function on $\mathbb R$, with $\nu(0)=0$.
Then (see \cite{BMT}), $\nu$ is of \emph{Braun-Meise-Taylor} type (BMT in short) if the following conditions hold:
\newline
($\alpha$) \hspace{1em} $\displaystyle \nu(2x)=O(\nu(x)),\quad x\to\infty,$
\newline
($\beta$) \hspace{1em} $\displaystyle \nu(x)=O(x),\quad x\to\infty,$
\newline
($\gamma$) \hspace{1em} $\displaystyle \ln x = o(\nu(x)), \quad x \to \infty,\;\;$ 
%i.e.
%$\displaystyle \lim_{t \to \infty} \frac{\ln t}{\omega(t)} = 0$,
\newline
($\delta$) \hspace{1em} $\displaystyle \vartheta(x)=\nu (e^x)\quad$ is convex.

\begin{ex}
\label{exampleBMT}
Some classical examples of BMT functions are
\begin{align*}
%\label{BMTexamples}
\nu (x)=|x|^s,\,0<s\leq 1,\quad\nu (x) = \ln^{s}_+ |x|,\, s>1,\quad \nu(x)=\frac{|x|}{\ln^{s-1} (e+|x|)},\,s>1,  
\end{align*}
for $x\in \mathbb R$ where $\ln_+{|x|}=\max\{0, \ln|x|\}$.
Moreover, it was shown in \cite[Theorem 1]{TT-01} that $\omega_{\sigma}(\ln_+|x|)$, $x\in\mathbb R$, where $\omega_{\sigma}$ is given in \eqref{BazicnaFunkcija}, is asymptotically equivalent to a BMT function. %Clearly, the same conclusion holds for $g_{\sigma}(x)=\omega_{\sigma}(\ln(1 +|x|))$.
\end{ex}

%Note that  by \eqref{PosledicaLambert1.5} it follows that $\omega(t)=W(|t|)$ is not a weight functions since the condition $(\gamma)$ is not satisfied.

Next, we recall some of the basic properties of the \emph{Lambert $W$ function}. It is defined as the multivalued inverse of the function \( z \mapsto z e^{z} \) for \( z \in \mathbb{C} \). We will denote its principal branch by \( W(z) \), \( z \in \mathbb{C} \setminus (-\infty, -e^{-1}] \), which is also denoted by $W_0(z)$ in the literature. Lambert $W$ function splits the complex plane into infinitely many regions. The boundary curve of its principal branch is given by: 
$$\{(-x\, \ctg x,x)\in \mathbb R^2\,|\,-\pi <x<\pi\}.$$ 
Here we list some additional properties of $W$ that we will use in the sequel:
\begin{itemize}
\item[$(W1) \quad$] $W(-e^{-1})=-1$, $W(0)=0$, $W(e)=1$ and $W(x)$ is continuous, increasing and concave on $(-e^{-1},\infty)$.
\vspace{0.2cm}

\item[$(W2) \quad$] $ z=W(z)e^{W(z)}$,  \( z \in \mathbb{C} \setminus (-\infty, -e^{-1}] \).

\item[$(W3) \quad $] $\displaystyle\ln x -\ln(\ln x)\leq W(x)\leq \ln x-\frac{1}{2}\ln (\ln x), \quad x\geq e.$
%\label{sharpestimateLambert}

%\end{equation}
%$W$ can be represented in the form of the absolutely convergent series
%$$
%W(x)=\ln x-\ln (\ln x)+\sum_{k=0}^{\infty}\sum_{m=1}^{\infty}c_{km}\frac{(\ln(\ln x))^m}{(\ln x)^{k+m}},\quad x\geq x_0>e,
%$$
%with suitable constants $c_{km}$ and  $x_0 $, wherefrom  the following  estimates hold:

\item[$(W4) \quad$] Derivatives of Lambert $W$ function are given by
\begin{align*}
    %\label{visiizvodlambert}
    W^{(n)} (x) = \frac{W^n (x)\, p_n (W(x))}{x^n (1+W(x))^{2n-1}}, \quad x>0,\quad n\in\mathbb{N},
\end{align*} where $p_n$ is polynomial satisfying 
$$p_{n+1}(x) = (1+x)p'_n (x) - (nx+3n-1)p_n (x), \quad p_1 (x)=1.$$ In particular,

\begin{align}
\label{prviizvodlambert}
W'(x) = \frac{W(x)}{x (1+W(x))} , \quad x>0.
\end{align}

%The equality in \eqref{sharpestimateLambert} holds if and only if $x=e$.
\end{itemize} Note that $(W1)$ implies that $W(x)>0$ when $x>0$. From $(W3)$ it follows
\begin{equation} \label{PosledicaLambert1.5}
W(x)\sim \ln x, \quad x\to \infty,
\end{equation}
and hence $W(C x)\sim W(x),$ $ x\to \infty$, for any $C>0$. Moreover, by $(W1)$ and $(W2)$ we have
\begin{equation}
\label{LambertU0}
    W(x) \sim x ,\quad x\to 0.
\end{equation} For more details concerning the Lambert $W$ function we refer to \cite{LambF, Mezo}.

We end this preliminary section with the following lemma that describes the behavior of $e^{W(x)}$.

\begin{lema}
\label{LemaLambert}
If $W$ is the principal branch of Lambert $W$ function then 
$$\ln x \prec e^{W(x)} \prec x ,\quad x\to \infty.$$
\end{lema}
\begin{proof}
 By using L'H\^opital's rule, \eqref{prviizvodlambert} and \eqref{PosledicaLambert1.5}, we have
\begin{align*}
    \lim_{x\to +\infty} \frac{\ln x}{e^{W(x)}}
    = \lim_{x\to +\infty} \frac{\frac{1}{x}}{e^{W(x)} \frac{W(x)}{x(1+W(x))}}
    = \lim_{x\to +\infty} \frac{1}{e^{W(x)}} \frac{1+W(x)}{W(x)}
    =0.
\end{align*}
In addition, we use $(W2)$ and \eqref{PosledicaLambert1.5} to obtain
\begin{equation*}
    \lim_{x\to + \infty} \frac{e^{W(x)}}{x}
    = \lim_{x\to + \infty} \frac{e^{W(x)}}{W(x)e^{W(x)}}
    = \lim_{x\to + \infty} \frac{1}{W(x)}
    = 0.
\qedhere
\end{equation*}
\end{proof}

%\begin{equation} \label{PosledicaLambert2}

%\end{equation}

%and by $(W2)$ we have
%$$%\begin{equation} \label{PosledicaLambert1}
%W(x\ln x)=\ln x,\quad x\geq 1.
%$$%\end{equation}

%komanda za stavljanje deo teksta u komentar, da ne pišemo svaki put oznaku za procenat
\iffalse
\begin{rem}
Note that $(W2)$ implies $\displaystyle e^{W(x)}=\frac{x}{W(x)}$ and together with \eqref{PosledicaLambert1.5}, this yields $\ln x \prec e^{W(x)} \prec x , x\to + \infty$. Moreover, since $\displaystyle \omega_{\sigma}(x)=x e^{\frac{1}{\sigma-1}W(x)}$, where $\omega_{\sigma}$ is given in (\ref{BazicnaFunkcija}), we have

\begin{align}
    \label{ponasanjeu0}
    \quad \omega_{\sigma} (x) \sim x , x\to 0 \quad {\rm and}\quad\lim_{x\to 0^+}\omega_{\sigma} (x)=0.
\end{align} In particular, $\omega_{\sigma}$ is continuous on $[0,\infty)$ which implies that $f_{\rho,\sigma}$ is continuous on $\mathbb R$.
\end{rem}
\fi

\section{Regularity classes}
\label{subsec:regularityclasses}

We begin with a lemma that summarizes the main properties of the sequences \( M^{\tau,\sigma}_p \), $p\in\mathbb{N}_0,$ given in \eqref{DefinicijaNiza}. We refer to \cite{PTT-01} for the proof, see also \cite{TT-03} 

%Although these properties will not be explicitly used in what follows, we include them for the sake of the reader.

\begin{lema}
\label{osobineM_p_s}
Let $\tau>0$, $\sigma>1$, $M_0 ^{\tau,\sigma}=1$,
and $M_p ^{\tau,\sigma}=p^{\tau p^{\sigma}}$, $p\in \mathbb N$.
Then there exists constant $C>1$ such that:
\begin{itemize}
\item[$(M.1)$] 
\hspace{1em} 
$ (M_p^{\tau,\sigma})^2\leq M_{p-1}^{\tau,\sigma}M_{p+1}^{\tau,\sigma}, \quad$ $p\in \mathbb N, $ %\medskip
\item[$\widetilde{(M.2)}$]  \hspace{1em}   
$ M_{p+q}^{\tau,\sigma}\leq C^{p^{\sigma} + q^{\sigma}}
M_p^{\tau 2^{\sigma-1},\sigma}M_q^{\tau 2^{\sigma-1},\sigma},\quad$
$ p,q\in \mathbb N_0,\quad$
\medskip
\item[$\widetilde{(M.2)'}$] \hspace{1em}
$ M_{p+1}^{\tau,\sigma}\leq C ^{p^{\sigma}} M_p^{\tau,\sigma}, \quad$ 
$ p\in \mathbb N_0, \quad$ %\medskip
\item[$(M.3)'$] \hspace{1em}
$ \displaystyle   \sum\limits_{p=1}^{\infty}\frac{M_{p-1}^{\tau,\sigma}}{M_p^{\tau,\sigma}} <\infty.
$
\end{itemize} 

In addition, if $\s_2>\s_1>1$ and $\t_0>0$ then for every $h,\tau>0$ there exists $C>0$ such that
\begin{equation}
\label{M4}
h^{p^{\s_1}}M^{\t_0,\s_1}_p\leq C M_p^{\tau,\s_2}.
\end{equation}
%Moreover,
%\[
%\lim_{p \to \infty} \frac{p!^s}{M^{\tau,\sigma}_p} = 0, %\quad \text{for all } s \geq 1.
%\]
\end{lema} 
%The proof of Lemma \ref{osobineM_p_s} we refer to.

\begin{rem}
Let us briefly comment the case $\sigma=1$. Note that $\displaystyle M^{\tau,1}_p = p^{\tau p}$, $p\in\mathbb N$, for $\tau>0$ are Gevrey sequences. Then the conditions $\widetilde{(M.2)'}$ and $\widetilde{(M.2)}$ reduce to the classical Komatsu conditions  ${(M.2)'}$ and ${(M.2)}$, respectively. Moreover, the Gevrey sequences satisfy \emph{non-quasianalyticity} condition ${(M.3)'}$ if and only if $\tau > 1$. 
\end{rem}

Now we can define the extended Gevrey classes $\E_{\tau,\sigma}(\mathbb R)$ for $\tau>0$ and $\sigma>1$.

\begin{de}
\label{definicijaKlase}
Let  $\tau>0$ and $\sigma>1$ and $M^{\tau,\sigma}_p=p^{\tau p^{\sigma}}$ for $p\in \mathbb N$, $M^{\tau,\sigma}_0=1$. A smooth function $\phi$ belongs to $\E_{\tau,\sigma}(\mathbb R)$ if 
\begin{equation}
\label{DerivativeEstimates}
(\forall\, \mathbb K\subset\subset \mathbb R)(\exists C>0)\quad \sup_{x\in \mathbb K}|\phi^{(p)}(x)|\leq C^{p^{\sigma}+1} M^{\tau,\sigma}_p, \,\, p \in \mathbb N_0.
\end{equation}
We denote
\begin{align*} 
%\label{Epsilon_sigma}
\displaystyle \E_{\sigma}(\mathbb R)=\bigcup_{\tau>0} \E_{\tau,\sigma}(\mathbb R).
\end{align*}
\end{de}

\begin{rem} In the definition of $\mathcal{E}_{\tau,\sigma}(\mathbb{R})$ the space $\mathbb{R}$ may be replaced by any open set $U\subseteq\mathbb{R}$. This produces the local classes $\mathcal{E}_{\tau,\sigma}(U)$; their elements satisfy derivative estimates \eqref{DerivativeEstimates} on every compact set $\mathbb{K}\subset\subset U$.

Again for $\sigma=1$ we obtain some of the well-known classes. For instance, \( \mathcal{E}_{1,1}(\mathbb{R}) = \mathcal{A}(\mathbb{R}) \), and \( \mathcal{E}_{\tau,1}(\mathbb{R}) = \mathcal{G}_\tau(\mathbb{R}) \) for \( \tau > 1 \) are the spaces of locally analytic functions and Gevrey functions of order \( \tau \), respectively. Recall, $\phi\in \mathcal G_{\t}(\mathbb R)$ if 

$$(\forall\, \mathbb K\subset\subset \mathbb R)(\exists C>0)\quad \sup_{x\in \mathbb K}|\phi^{(p)}(x)|\leq C^{p+1} p!^{\t}, \,\, p \in \mathbb N_0.$$
\end{rem}

Extended Gevrey classes $\mathcal{E}_{\tau,\sigma}$ for $\sigma > 1$ and $\tau > 0$ were introduced and studied as spaces of locally smooth functions equipped with projective and inductive limit topologies. Since these topologies will not be used in this paper, we omit the details and refer the reader to \cite{TT-03}. Unlike the classical Gevrey classes, note that in the definition of $\mathcal{E}_{\tau,\sigma}$ the geometric factor $C^p$ is replaced by $C^{p^{\sigma}}$, $\sigma > 1$. This modification ensures the stability of the classes under the action of differential operators.

Taking the union with respect to the parameter $\tau$, we obtain the \emph{weight-matrix class} $\mathcal{E}_{\sigma}$ of ultradifferentiable functions, whose derivatives are controlled by the family of sequences $\{ M^{\tau,\sigma}_p \}_{\tau > 0,\, p \in \mathbb{N}_0}$ for $\sigma > 1$. These classes can also be treated within the framework of BMT (Braun--Meise--Taylor) theory (see \cite{TT-01, Javier01}).

%We collect some basic properties of the extended Gevrey classes in the following Proposition (see \cite{PTT-01, PTT-02, PTT-03, TT-01, TT-02, TT-03}), whose proof relies on the properties of the sequences $M^{\tau,\sigma}_p$, $p \in \mathbb{N}_0$, given in Lemma \ref{OsobineKlasa}.

We summarize several basic properties of the extended Gevrey classes in the following Proposition (see \cite{PTT-01, PTT-02, PTT-03, TT-01, TT-02, TT-03}); the proof relies on the properties of the sequences $M^{\tau,\sigma}_p$ stated in Lemma \ref{osobineM_p_s}.

%were introduced and investigated in \cite{PTT-01, PTT-02, PTT-03, TT-01} mainly as two parameter spaces $\E_{\tau,\sigma}$, $\tau>0$, $\sigma>1$. Since they do not fit in the classical theory of ultradifferentiable functions, one must observe intersections and unions with respect to parameter $\tau$. Of course, they inherit properties of $\E_{\tau,\sigma}(\mathbb R)$. Here we will only observe unions since our goal is to contruct a wavelet which is "less regular" in the sense that corresponding classes are closer to $C^{\infty}$. We collect some of the basic properties in the following Proposition.

\begin{prop} 
%(\cite{PTT-01, PTT-03, PTT-031}
\label{OsobineKlasa}
Let $\tau>0$ and $\sigma>1$. 
\begin{itemize} 
\item[$a)$] For $\s_2>\s_1>1$ we have
\begin{equation}
\mathcal{A}(\mathbb R)\subset\bigcup_{\tau>1}{\mathcal G}_\tau ({\mathbb R})\subset\bigcap_{\tau>0} {\mathcal{E}}_{\tau,\s_1}({\mathbb R})\subset  {\mathcal{E}}_{\tau,\s_1}({\mathbb R})\subset {\mathcal{E}}_{\s_1}\subset \E_{\s_2}\subset C^{\infty} ({\mathbb R}). \nonumber
\end{equation} %with strict inclusions.
\item[$b)$] there exists a compactly supported function $\varphi\in {\mathcal{E}}_{\tau,\sigma}({\mathbb R})$. 
\item[$c)$] ${\mathcal{E}}_{\tau,\sigma}({\mathbb R}) $ is closed under the {pointwise} multiplication.
\item[$d)$] ${\mathcal{E}}_{\tau,\sigma}({\mathbb R}) $ is closed under finite order derivation.
\item[$e)$]  $\E_{\tau,\sigma}({\mathbb R})$ is closed under superposition. In particular, if $F(x)\in \mathcal A(\mathbb R)$  and $f(x)\in \E_{\tau,\sigma}({\mathbb R})$ then $F(f(x))\in \E_{\tau,\sigma}({\mathbb R})$.
\item[$f)$] $\E_{\tau,\sigma}({\mathbb R})$ is invariant under translations and dilatations.
\end{itemize}
\end{prop}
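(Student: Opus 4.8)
The plan is to reduce every item to the sequence conditions of the preceding Lemma --- namely $(M.1)$, $\widetilde{(M.2)}$, $\widetilde{(M.2)'}$ and $(M.3)'$ --- to the comparison \eqref{M4}, and to the defining estimate \eqref{DerivativeEstimates}. The workhorse throughout is the pair of elementary facts that $p\le p^{\s}$ and that $t\mapsto t^{\s}$ is superadditive for $\s\ge 1$, so that $\sum_i a_i^{\s}\le\bigl(\sum_i a_i\bigr)^{\s}$. The second fact yields the log-superadditivity $M^{\t,\s}_qM^{\t,\s}_{p-q}\le M^{\t,\s}_p$ with the \emph{same} $\t$ (compare logarithms, using $\ln q,\ln(p-q)\le\ln p$), and the first lets me absorb any factor of the form $A^{p}$, $\binom{p}{q}\le 2^{p}$, $p!\le p^{p}$ or $|\lambda|^{p}$ into a factor $C^{p^{\s}}$. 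Part $vi)$ is then immediate: translation leaves $\phi^{(p)}$ unchanged, while for a dilation $\phi(\lambda\,\cdot)$ the chain rule produces only the harmless factor $|\lambda|^{p}\le\max(|\lambda|,1)^{p^{\s}}$.

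In the chain $i)$ only the two proper inclusions require work. For $\bigcup_{\t>1}\mathcal{G}_\t\subset\bigcap_{\t>0}\E_{\t,\s_1}$ I would compare logarithms: for every fixed $\t'>0$ the Gevrey exponent $\t p\ln p$ is eventually dominated by $\t' p^{\s_1}\ln p$ because $p^{\s_1-1}\to\infty$, and the finitely many small indices are absorbed into the constant in \eqref{DerivativeEstimates}. For $\E_{\s_1}\subset\E_{\s_2}$ with $\s_2>\s_1>1$ I would invoke \eqref{M4}: given $\phi\in\E_{\t_0,\s_1}$, take $h$ equal to the constant in its estimate, and \eqref{M4} rewrites that bound as membership in $\E_{\t,\s_2}$. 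The remaining inclusions are definitional. Part $ii)$ follows from $i)$: non-quasianalyticity (which holds for the Gevrey sequences of order $\t>1$, and is recorded as $(M.3)'$) furnishes a compactly supported Gevrey function, and by the chain it already lies in $\bigcap_{\t>0}\E_{\t,\s}\subset\E_{\t,\s}$.

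Parts $iii)$ and $iv)$ are Leibniz and chain-rule computations on \eqref{DerivativeEstimates}. For the product I expand $(\phi\psi)^{(p)}=\sum_{q=0}^{p}\binom{p}{q}\phi^{(q)}\psi^{(p-q)}$, bound the factors by \eqref{DerivativeEstimates}, and apply $M^{\t,\s}_qM^{\t,\s}_{p-q}\le M^{\t,\s}_p$; the binomials, the $p+1$ summands and the geometric factors $C^{q^{\s}+(p-q)^{\s}}\le C^{p^{\s}}$ contribute only further $C^{p^{\s}}$ terms, so $\phi\psi\in\E_{\t,\s}$. For derivation it suffices to treat $\phi'$: the required bound needs $M^{\t,\s}_{p+1}\le C^{p^{\s}}M^{\t,\s}_p$, which is exactly $\widetilde{(M.2)'}$, together with $(p+1)^{\s}\le 2^{\s}p^{\s}$ to keep the geometric factor of the right shape; finite order then follows by iteration.

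The substantive item is the superposition $v)$, where I expect the real effort. I would differentiate $F\circ f$ by the Fa\`a di Bruno formula, writing $(F\circ f)^{(n)}$ as a sum over $k_1,\dots,k_n\ge 0$ with $\sum_j jk_j=n$ and $m=\sum_j k_j$ of terms $\tfrac{n!}{\prod_j k_j!\,(j!)^{k_j}}\,F^{(m)}(f)\prod_j\bigl(f^{(j)}\bigr)^{k_j}$. Inserting \eqref{DerivativeEstimates} for $f$ and, for the analytic outer function, $|F^{(m)}|\le A^{m+1}m!$, the whole estimate reduces to controlling $\prod_j\bigl(M^{\t,\s}_j\bigr)^{k_j}$. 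This is where the same $\t$ survives: applying $\sum_i a_i^{\s}\le(\sum_i a_i)^{\s}$ to the multiset of parts gives $\sum_j k_j j^{\s}\le n^{\s}$, hence $\prod_j\bigl(M^{\t,\s}_j\bigr)^{k_j}\le M^{\t,\s}_n$ with no inflation of $\t$. It then remains to absorb the combinatorial prefactor, the factor $m!$ (note $m\le n$, so $m!\le n!\le C^{n^{\s}}$) and the number of partitions of $n$ into a single $C^{n^{\s}}$, which is again routine via $p\le p^{\s}$. The main obstacle is precisely to arrange this bookkeeping so that $\t$ is not enlarged: a naive iteration of $\widetilde{(M.2)}$ across the $m$ factors would inflate $\t$ by powers of $2^{\s-1}$, so the superadditive estimate must be used in its place, and this is what delivers the displayed conclusion $F(f)\in\E_{\t,\s}$ with the original $\t$.
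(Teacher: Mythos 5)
Your proposal is correct. Note that the paper itself supplies no argument for this proposition---it is stated with a pointer to \cite{PTT-01, PTT-02, PTT-03, TT-01, TT-02, TT-03} and the remark that the proof rests on the properties of the sequences $M^{\t,\s}_p$ from the lemma of Section~\ref{subsec:regularityclasses}---so your write-up is necessarily a reconstruction; it is a faithful one, and it makes explicit the one point that a naive use of the listed properties would miss. The splitting condition $\widetilde{(M.2)}$ runs in the wrong direction for products and compositions (it bounds $M^{\t,\s}_{p+q}$ from above by a product, at the cost of inflating $\t$ to $2^{\s-1}\t$), so leaning on it would only give closedness of the union $\E_{\s}=\bigcup_{\t>0}\E_{\t,\s}$; what items $iii)$ and $v)$ require, with $\t$ fixed, is the reverse bound $M^{\t,\s}_qM^{\t,\s}_{p-q}\le M^{\t,\s}_p$ and its Fa\`a di Bruno version $\prod_j\bigl(M^{\t,\s}_j\bigr)^{k_j}\le M^{\t,\s}_n$ for $\sum_j jk_j=n$. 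You obtain these from superadditivity of $t\mapsto t^{\s}$; equivalently, they follow from $(M.1)$ together with $M^{\t,\s}_0=1$ (log-convexity yields $M^{\t,\s}_pM^{\t,\s}_q\le M^{\t,\s}_{p+q}$), so your workhorse is in fact within the toolkit the paper points to, just used in the direction that preserves $\t$. The rest of your bookkeeping is sound: binomial coefficients, the Bell-number count $\le n^n$, $m!\le n!$, and dilation factors $|\lambda|^p$ are all absorbed into $C^{p^{\s}}$ because $\sup_p \ln p/p^{\s-1}<\infty$ for $\s>1$; $\widetilde{(M.2)'}$ handles $iv)$ with the same $\t$; \eqref{M4} gives $\E_{\s_1}\subset\E_{\s_2}$ (indeed with room to spare, landing in $\bigcap_{\t>0}\E_{\t,\s_2}$); and $(M.3)'$ via Denjoy--Carleman gives $ii)$. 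One caution regarding $v)$: your argument covers exactly the displayed claim (analytic outer function, where $|F^{(m)}|\le A^{m+1}m!$ contributes no $M$-factor, only $m!\le n!\le C^{n^{\s}}$); if both functions were assumed merely in $\E_{\t,\s}$, the extra factor $M^{\t,\s}_m$ is not absorbed by the same one-line superadditivity estimate, so you were right to restrict to the case the proposition actually asserts.
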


Next we define a function associated to the sequence $M^{\tau,\sigma}_p$, $p\in \mathbb N_0$.

\begin{de} Let $\tau>0$ and $\sigma>1$. Then the associated function to the sequence $M^{\tau,\sigma}_p$, $p\in\mathbb N_0$, is given by 
\begin{align*}
%\label{asociranaProduzena}
\displaystyle T_{\tau,\sigma}(x)=\sup_{p\in \mathbb N_0}\ln\frac{x^{p}}{M^{\tau,\sigma}_p},\quad \;\; x>0.  
\end{align*} 
\end{de}
% For more general definition we refer to  \cite{PTT-03}. 

\begin{rem}
Since
\[
\left(M^{\tau,\sigma}_p\right)^{1/p} = p^{\tau p^{\sigma - 1}},\quad p\in\mathbb N,
\]
is clearly bounded below by a positive constant for any $\tau > 0$ and $\sigma > 1$, it follows that $T_{\tau,\sigma}(x)$ vanishes for sufficiently small $x > 0$ (see \cite{Komatsuultra1}). Thus we may consider $T_{\tau,\sigma}(|x|)$, $x\in \mathbb R$, instead.
%Therefore we may assume $T_{\tau,\sigma}(0)=0$. 
%In what follows, we will assume that $T_{\tau,\sigma}(0) = 0$.
\end{rem}

It turns out that for any fixed $\tau>0$, $T_{\tau,\sigma}(|x|)$ is asymptotically equivalent to the function  $\displaystyle g_{\sigma}(x)=  \frac{\ln^{\frac{\sigma}{\sigma-1}} (1+|x|)}{ W^{\frac{1}{\sigma-1}} (\ln(1+|x|))}$ which appears in \eqref{BazicnaFunkcija}. In particular, following Proposition holds (see  \cite{PTT-03, TT-01}).

\begin{prop}
\label{TeoremaAsocirana}
For any $\tau>0$ and $\sigma>1$ there exists $A_{\sigma},B_{\sigma}>0$ and $\widetilde{A}_{\tau,\sigma}, \widetilde{B}_{\tau,\sigma}\in \mathbb R$ such that for $x>0$
\begin{equation*}
\label{ocenaAsocirana}
  B_{\sigma}\tau^{-\frac{1}{\sigma-1}} \frac{\ln^{\frac{\sigma}{\sigma-1} }(1+x) }{W ^{\frac{1}{\sigma-1}}(\ln (1+x))} +\widetilde{B}_{\tau,\sigma}\leq T_{\tau,\sigma}(x)\leq A_{\sigma}\tau^{-\frac{1}{\sigma-1}} \frac{\ln^{\frac{\sigma}{\sigma-1} }(1+x) }{W ^{\frac{1}{\sigma-1}}(\ln (1+x))} +\widetilde{A}_{\tau,\sigma}.
 \end{equation*} 
%In particular, $T_{\tau,\sigma}(|x|)\asymp \rho\, g_{\sigma}(x),\,x\in\mathbb R,$ for $\rho=\tau^{-\frac{1}{\sigma-1}}$ where $g_{\sigma}$ is given in \eqref{BazicnaFunkcija}.
\end{prop}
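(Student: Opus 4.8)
The plan is to read $T_{\t,\s}$ as a Legendre-type extremal problem and to evaluate it by first relaxing the discrete supremum to a continuous one. Writing $\ln\frac{x^{p}}{M^{\t,\s}_p}=p\ln x-\t p^{\s}\ln p$, I fix $x$ large, set $t=\ln x$, and consider $F(u)=ut-\t u^{\s}\ln u$ for $u>0$. A direct computation gives $F''(u)=-\t u^{\s-2}\big[(\s-1)(\s\ln u+1)+\s\big]<0$ for $u$ large, so $F$ is eventually strictly concave and has a unique interior maximiser $u^{\ast}=u^{\ast}(x)$, with $u^{\ast}\to\infty$ as $x\to\infty$, determined by the stationarity equation $t=\t (u^{\ast})^{\s-1}(\s\ln u^{\ast}+1)$. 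At this point $F(u^{\ast})=\t (u^{\ast})^{\s}\big((\s-1)\ln u^{\ast}+1\big)$.

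The second step is to invert the stationarity equation explicitly via the Lambert $W$ function. Substituting $y=\ln u^{\ast}$, then $z=y+\tfrac1\s$ and $w=(\s-1)z$, the equation turns into $w e^{w}=D\ln x$ with $D=\frac{(\s-1)e^{(\s-1)/\s}}{\t\s}$, hence $w=W(D\ln x)$ by $(W2)$. Carrying this back, and using $(\s-1)\ln u^{\ast}+1=w+\tfrac1\s$, $(u^{\ast})^{\s}=e^{-1}(e^{w})^{\s/(\s-1)}$ and $e^{w}=D\ln x/W(D\ln x)$, I obtain the closed form
$$F(u^{\ast})=\t e^{-1}\,(D\ln x)^{\frac{\s}{\s-1}}\,\frac{W(D\ln x)+\tfrac1\s}{W(D\ln x)^{\frac{\s}{\s-1}}}.$$
As $x\to\infty$ the last factor is asymptotic to $W(D\ln x)^{-1/(\s-1)}$; moreover $W(D\ln x)\sim W(\ln(1+x))$ and $\ln x\sim\ln(1+x)$ (by $(W3)$ and the equivalence $W(Cx)\sim W(x)$). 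A short simplification, in which the factors $e^{-1}$ and $e$ cancel, yields $\t e^{-1}D^{\s/(\s-1)}=\big(\tfrac{\s-1}{\s}\big)^{\s/(\s-1)}\t^{-1/(\s-1)}$ and therefore $F(u^{\ast})\sim\big(\tfrac{\s-1}{\s}\big)^{\s/(\s-1)}\t^{-1/(\s-1)}g_{\s}(x)$. This already exhibits the sharp leading coefficient and, crucially, the exact dependence $\t^{-1/(\s-1)}$ claimed in the Proposition.

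It remains to return from the continuous maximum to $T_{\t,\s}(x)=\sup_{p\in\N_0}F(p)$. The upper bound $T_{\t,\s}(x)\le F(u^{\ast})$ is immediate, and estimating $\frac{W(D\ln x)+1/\s}{W(D\ln x)^{\s/(\s-1)}}\le 2\,W(D\ln x)^{-1/(\s-1)}$ for large $x$ gives the upper inequality with a constant $A_{\s}$ and an additive constant $\widetilde A_{\t,\s}$. For the lower bound I would evaluate $F$ at the integer $p=\lfloor u^{\ast}\rfloor$ and control $F(u^{\ast})-F(p)$ by a second-order Taylor estimate bounded by $\sup_{|\xi-u^{\ast}|\le1}|F''(\xi)|$. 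Since $(u^{\ast})^{\s-1}\asymp \ln x/W(\ln x)$ and $\ln u^{\ast}\asymp W(\ln x)$, one finds $F(u^{\ast})-F(p)=O\big((\ln x)^{(\s-2)/(\s-1)}W(\ln x)^{1/(\s-1)}\big)$, whose ratio to $g_{\s}(x)$ is $\asymp\big(W(\ln x)/\ln x\big)^{2/(\s-1)}\to0$. Thus the discretisation cost is $o(g_{\s})$ and is absorbed by slightly lowering the leading constant to some $B_{\s}$; on any bounded $x$-interval $T_{\t,\s}$ and $g_{\s}$ are bounded, so the additive constants $\widetilde A_{\t,\s},\widetilde B_{\t,\s}$ settle the estimates there, and the two-sided bound follows for all $x>0$.

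I expect the discretisation step to be the main obstacle. The continuous optimisation and the Lambert $W$ inversion are essentially forced by the form of $M^{\t,\s}_p$, whereas verifying that replacing $\sup_{u>0}$ by $\sup_{p\in\N_0}$ costs only a lower-order term — especially when $\s\ge2$, where $F''$ grows and one must check that the gain of the factor $(W(\ln x)/\ln x)^{2/(\s-1)}$ survives all the $W$-corrections — requires the careful asymptotic bookkeeping outlined above.
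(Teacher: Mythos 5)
Your proposal is correct, and there is nothing internal to compare it against: the paper does not prove this Proposition at all, but imports it from the cited references (PTT-03, TT-01), so your argument is a genuinely self-contained replacement for that citation rather than a variant of an in-paper proof. I checked the computations and they hold up: the stationary value $F(u^{*})=\tau (u^{*})^{\sigma}\bigl((\sigma-1)\ln u^{*}+1\bigr)$; the reduction of the stationarity equation to $we^{w}=D\ln x$ with $D=\frac{(\sigma-1)e^{(\sigma-1)/\sigma}}{\tau\sigma}$ via $y=\ln u^{*}$, $z=y+\frac{1}{\sigma}$, $w=(\sigma-1)z$; the identity $\tau e^{-1}D^{\sigma/(\sigma-1)}=\bigl(\tfrac{\sigma-1}{\sigma}\bigr)^{\sigma/(\sigma-1)}\tau^{-1/(\sigma-1)}$, which is exactly what produces the $\tau^{-1/(\sigma-1)}$ dependence demanded by the statement; and the discretization error of relative order $\bigl(W(\ln x)/\ln x\bigr)^{2/(\sigma-1)}\to 0$, which works uniformly for all $\sigma>1$ (for $1<\sigma<2$ the Taylor error even decays, since $(u^{*})^{\sigma-2}\to 0$). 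This route buys more than the statement asks for: it identifies the sharp leading constant $\bigl(\tfrac{\sigma-1}{\sigma}\bigr)^{\sigma/(\sigma-1)}$ on both sides, whereas the Proposition only requires two-sided bounds with possibly different $A_{\sigma},B_{\sigma}$.

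Two points you treat as immediate deserve an explicit sentence in a final write-up. First, $T_{\tau,\sigma}(x)\le F(u^{*})$ uses concavity of $F$ only on $[1,\infty)$; on $(0,1)$ one has $F(u)\le \ln x+\tau/(e\sigma)$, which is dominated by $F(u^{*})$ for large $x$ (or can be absorbed into $\widetilde{A}_{\tau,\sigma}$), and the $p=0$ term only enforces $T_{\tau,\sigma}\ge 0$, which matters for small $x$. Second, since $D$ depends on $\tau$, the equivalences $W(D\ln x)\sim W(\ln(1+x))$ and the absorption of the discretization loss into a halved leading constant hold only for $x\ge x_{0}(\tau,\sigma)$; keeping $A_{\sigma},B_{\sigma}$ free of $\tau$ therefore requires pushing all threshold effects into the additive constants $\widetilde{A}_{\tau,\sigma},\widetilde{B}_{\tau,\sigma}$ — which your final paragraph does arrange, so this is a matter of presentation, not a gap.
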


%The more general of estimates can be found in~\cite{PTT-03}.
%\end{rem}

\begin{rem}
Let us define 
$$\displaystyle h_{\tau,\sigma}(x):= e^{-T_{\tau,\sigma}(1/x)}=\inf_{p\in \mathbb N_0} M^{\tau,\sigma}_p x^p,\quad x>0.$$ Then Proposition \ref{TeoremaAsocirana} implies that for any $\rho>0$ and $\sigma>1$ there exist constants $A,B,\tau_1,\tau_2>0$ such that
\begin{equation}
\label{ocenah}
A h_{\t_1,\sigma}(x)\leq f_{\rho,\sigma }(x)  \leq B h_{\t_2,\sigma}(x) ,\quad x>0,
\end{equation} where $f_{\rho,\sigma}(x)=e^{-\rho g_{\sigma}(1/x)}$ is given in \eqref{BazicnaFunkcija}. 

Note that the right-hand side of \eqref{ocenah} implies that for any given $\rho>0$ there exist $B_1,\tau>0$ such that
\begin{equation}
\label{OcenafNiz}
\sup_{x>0}\frac{f_{\rho,\sigma}(x)}{x^p}\leq B_1 M^{\tau,\sigma}_p,\quad p\in \mathbb N_0.
\end{equation}
\end{rem}

%, and $z=|z|e^{i\theta}$ be the element %$(|z|,\theta)\in(0,\infty)\times\mathbb{R}$. 

%For every $0<\gamma<2$ we identify $S_\gamma$ with the corresponding sector in $\mathbb{C}$.

%For every $R>0$ and $z_0\in\mathbb{C}$, $D(z_0,R)$ stands for the open disc in $\mathbb{C}$ centered at $z_0$ and radius$~R$.

%$\ln (\cdot)$ stands for the principal branch of the complex logarithm, i.e., $\hbox{arg}(\Log (z))\in(-\pi,\pi)$ for every $z\in\mathbb{C}\setminus(-\infty,0]$.

 %\exp \left\{ -ea_{\tau , \sigma} ^{1-\sigma} \omega_{\sigma} (b_{\tau , \sigma}\ln (1+z)) \right\}

 %where are $a_{\tau , \sigma } = \left(\frac{\sigma - 1}{\tau \sigma}\right)^{\frac{1}{\sigma - 1}}$ and $b_{\tau , \sigma} = e^{\frac{\sigma - 1}{\sigma}} \frac{\sigma - 1}{\tau \sigma}$

%\begin{rem}
%Note that (\ref{eq.Bounds_e_h_subsectors_real_axis}) implies

%Moreover, since $\arg \frac{1}{z} = - \arg z$, the sectors defined in the (\ref{defsektora}) are invariant under transformation $z\longmapsto \frac{1}{z}, \, z\neq 0$. Therefore, (\ref{ocena}) remains true if put $z$ instead of $\frac{1}{z}$.
%\end{rem}

The following definition introduces the precise decay rate studied in this paper.

\begin{de}
\label{DefinicijaOpadanje}
Let $\sigma>1$ and  $\displaystyle g_{\sigma}(x)=  \frac{\ln^{\frac{\sigma}{\sigma-1}} (1+|x|)}{ W^{\frac{1}{\sigma-1}} (\ln(1+|x|))}$, $x\in \mathbb R$. A continuous function $\phi$ belongs to $\displaystyle {{\mathbf{\Gamma}}}_{\sigma}\left(\mathbb{R}\right)$ if 
\begin{equation}
\label{NejednakostPaley}
(\exists \rho>0)\,(\exists C>0)\quad\quad |\phi(x)| \leq  C\,e^{-\rho g_{\sigma}(x)},\quad  x \in \mathbb R.
\end{equation} 
\end{de}
We can now state the Paley--Wiener theorem, which characterizes the regularity of $\mathcal{E}_{\sigma}$ in terms of the decay rate given by $\mathbf{\Gamma}_{\sigma}$. For further details, we refer to \cite{PTT-03, TT-03}.

\begin{te}
\label{Thm:Peli-Viner}
Let $\sigma>1$ and ${{\mathbf{\Gamma}}}_{\sigma}$ be as in Definition \ref{DefinicijaOpadanje}.

\begin{itemize}
\item[a)] Let $\varphi$ be a $C^{\infty}$ function with compact support. If $\varphi\in \mathcal E_{\sigma}(\mathbb R)$ then $\widehat\varphi  \in{{\mathbf{\Gamma}}}_{\sigma}(\mathbb R)$.

\item[b)] If the function $\varphi$ is such that $\widehat\varphi\in {\mathbf{\Gamma}}_{\sigma}(\mathbb R)$ 
 then $\varphi\in \mathcal E_{\sigma}(\mathbb R)$.
\end{itemize}
\end{te}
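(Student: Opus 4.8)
The plan is to prove the Paley--Wiener correspondence of Theorem~\ref{Thm:Peli-Viner} by transferring, via the Fourier transform, the decay estimates $\eqref{NejednakostPaley}$ for $\mathbf{\Gamma}_\sigma$ into the derivative bounds $\eqref{DerivativeEstimates}$ defining $\mathcal E_\sigma$, and conversely. The key technical bridge in both directions is the reformulation of the extended Gevrey condition through the function $h_{\tau,\sigma}(x)=e^{-T_{\tau,\sigma}(1/x)}=\inf_{p} M^{\tau,\sigma}_p x^p$ and the two-sided estimate $\eqref{ocenah}$, which links the decay profile $e^{-\rho g_\sigma}$ to the sequence $M^{\tau,\sigma}_p$. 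Thus the heart of each implication is a standard Fourier-analytic argument, but with the geometric factor $C^p$ replaced throughout by $C^{p^\sigma}$, and with the associated function $T_{\tau,\sigma}$ playing the role that the classical associated function plays in Komatsu's theory.

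\medskip

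\emph{Part b) (decay $\Rightarrow$ regularity).} First I would assume $\widehat\varphi\in\mathbf{\Gamma}_\sigma(\mathbb R)$, so $|\widehat\varphi(\xi)|\le C e^{-\rho g_\sigma(\xi)}$ for some $\rho,C>0$. Since by $\eqref{OcenafNiz}$ the profile $f_{\rho,\sigma}(\xi)=e^{-\rho g_\sigma(1/\xi)}$ satisfies $\sup_{\xi>0} f_{\rho,\sigma}(\xi)/\xi^p\le B_1 M^{\tau,\sigma}_p$, applying this after the change $\xi\mapsto 1/\xi$ gives the moment bound $|\xi|^p|\widehat\varphi(\xi)|\le C' M^{\tau,\sigma}_p$ for all $p$ (up to handling $|\xi|\le 1$, where $\widehat\varphi$ is bounded). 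In particular $\widehat\varphi$ and all its polynomial weights are integrable, so $\varphi$ is smooth and I may differentiate under the integral sign: $\varphi^{(p)}(x)=(2\pi)^{-1}\int (i\xi)^p\widehat\varphi(\xi)\,e^{ix\xi}\,d\xi$. Splitting $|\xi^p\widehat\varphi(\xi)|=|\xi^{p+2}\widehat\varphi(\xi)|\cdot|\xi|^{-2}$ and using the moment bound at index $p+2$ together with $\int|\xi|^{-2}\,d\xi<\infty$ away from the origin, I obtain $|\varphi^{(p)}(x)|\le C'' M^{\tau,\sigma}_{p+2}$ uniformly in $x$. Finally $\widetilde{(M.2)'}$ (applied twice) absorbs the shift from $M^{\tau,\sigma}_{p+2}$ to $C^{p^\sigma}M^{\tau,\sigma}_p$, yielding exactly the estimate $\eqref{DerivativeEstimates}$, hence $\varphi\in\mathcal E_\sigma(\mathbb R)$.

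\medskip

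\emph{Part a) (regularity $\Rightarrow$ decay).} Now I would assume $\varphi\in\mathcal E_\sigma(\mathbb R)$ is compactly supported, say $\supp\varphi\subseteq\mathbb K$, so there is $\tau>0$ and $C>0$ with $\sup_{x}|\varphi^{(p)}(x)|\le C^{p^\sigma+1}M^{\tau,\sigma}_p$. Integrating by parts $p$ times in $\widehat\varphi(\xi)=\int_{\mathbb K}\varphi(x)e^{-ix\xi}\,dx$ gives, using compact support to kill the boundary terms, $|\xi|^p|\widehat\varphi(\xi)|\le |\mathbb K|\cdot C^{p^\sigma+1}M^{\tau,\sigma}_p$ for every $p\in\mathbb N_0$. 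Optimizing over $p$ means minimizing the right-hand side as a function of $p$ for fixed $\xi$, i.e.\ computing $\inf_p C^{p^\sigma}M^{\tau,\sigma}_p|\xi|^{-p}$, which is precisely (after absorbing the constant $C^{p^\sigma}$ into a larger $\tau$ via $\eqref{M4}$ or a routine comparison) the quantity $h_{\tau',\sigma}(1/|\xi|)=e^{-T_{\tau',\sigma}(|\xi|)}$. Invoking Proposition~\ref{TeoremaAsocirana} to lower-bound $T_{\tau',\sigma}(|\xi|)$ by $B_\sigma\tau'^{-1/(\sigma-1)}g_\sigma(\xi)+\widetilde B_{\tau',\sigma}$ then produces $|\widehat\varphi(\xi)|\le C'e^{-\rho g_\sigma(\xi)}$ with $\rho=B_\sigma\tau'^{-1/(\sigma-1)}>0$, which is the membership $\widehat\varphi\in\mathbf{\Gamma}_\sigma(\mathbb R)$.

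\medskip

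\textbf{The main obstacle} I anticipate is handling the anomalous factor $C^{p^\sigma}$ correctly when passing between the moment bounds and the associated function. In classical Paley--Wiener theory the factor is geometric and is trivially absorbed; here, because $\widetilde{(M.2)'}$ only gives $M^{\tau,\sigma}_{p+1}\le C^{p^\sigma}M^{\tau,\sigma}_p$, every index shift and every superexponential constant must be controlled through $\eqref{M4}$ at the cost of enlarging $\tau$ (or $\sigma$), and one must verify that this enlargement is harmless since $\mathcal E_\sigma=\bigcup_{\tau>0}\mathcal E_{\tau,\sigma}$ and $\mathbf{\Gamma}_\sigma$ quantify existentially over $\tau$ and $\rho$. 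A secondary subtlety is the behavior near $\xi=0$, where the moment/infimum optimization degenerates (the infimum is attained at $p=0$); there one simply uses the crude bound $\sup|\widehat\varphi|\le\|\varphi\|_{L^1}$ together with $g_\sigma(0)=0$, so that $\eqref{NejednakostPaley}$ holds on a neighborhood of the origin with any constant $C$ large enough. Once these bookkeeping issues are dispatched, both implications follow from the estimates $\eqref{ocenah}$ and Proposition~\ref{TeoremaAsocirana}.
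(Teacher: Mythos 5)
Your argument is correct and follows essentially the same route as the paper's source for this theorem: the paper states it without proof, citing \cite{PTT-03, TT-03}, where both directions are obtained exactly as you propose --- moment bounds derived from \eqref{OcenafNiz} plus differentiation under the integral sign for part b), and integration by parts followed by optimization over $p$ via the associated function $T_{\tau,\sigma}$ and the two-sided estimate of Proposition \ref{TeoremaAsocirana} for part a). The only cosmetic point is that \eqref{M4} as stated requires $\sigma_2>\sigma_1$, so absorbing the factor $C^{p^{\sigma}}$ at the \emph{same} $\sigma$ must go through the ``routine comparison'' you mention, namely $C^{p^{\sigma}}M^{\tau,\sigma}_p\leq M^{\tau+\log_2 C,\,\sigma}_p$ for $p\geq 2$, which is harmless precisely because $\mathcal{E}_{\sigma}(\mathbb{R})=\bigcup_{\tau>0}\mathcal{E}_{\tau,\sigma}(\mathbb{R})$ and $\mathbf{\Gamma}_{\sigma}$ quantifies existentially over $\rho$.
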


%\begin{rem}
%\label{RemarkPeli}
%Part $a)$ of Theorem \ref{Thm:Peli-Viner} can be easily extended to smooth functions that are not compactly supported but have sufficient decay at infinity. In particular, if a smooth function $\varphi$ satisfies $\widehat{\varphi} \notin \mathbf{\Gamma}_{\sigma}$, then $\varphi \notin \mathcal{E}_{\sigma}$.
%\end{rem}

We conclude this section with the following technical lemma, which will be used in the sequel.
\begin{lema}
\label{TehnickaLema}
   Let $\sigma>1$ and ${{\mathbf{\Gamma}}}_{\sigma}$ be as in Definition \ref{DefinicijaOpadanje}. Then $\phi(\xi) \in \mathbf{\Gamma}_{\sigma} (\mathbb R)$ if and only if $(1+|\xi|)^a \phi(\xi) \in \mathbf{\Gamma}_{\sigma} (\mathbb R)$ for arbitrary $a \in \mathbb{R}$.
\end{lema}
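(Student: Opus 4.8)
The plan is to reduce the equivalence to a single one-sided statement and then exploit the fact that $g_{\s}$ grows strictly faster than any logarithm, so that a polynomial weight is negligible against the exponential decay. Concretely, it suffices to prove the implication: if $\phi\in\mathbf{\Gamma}_{\s}(\mathbb R)$, then $(1+|\xi|)^a\phi(\xi)\in\mathbf{\Gamma}_{\s}(\mathbb R)$ for every $a\in\mathbb R$ (of either sign). Granting this, the forward direction of the statement is immediate, while the converse follows by applying the implication to $\psi(\xi):=(1+|\xi|)^a\phi(\xi)$ with exponent $-a$, since $(1+|\xi|)^{-a}\psi(\xi)=\phi(\xi)$. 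Continuity is preserved throughout, because $1+|\xi|\ge 1>0$, so $(1+|\xi|)^a$ is continuous for every real $a$.

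The key estimate is that $\ln(1+|\xi|)\prec g_{\s}(\xi)$ as $|\xi|\to\infty$. To establish it I would set $t=\ln(1+|\xi|)$ and compute, using $\omega_{\s}(t)=t^{\s/(\s-1)}/W^{1/(\s-1)}(t)$ together with property $(W2)$ in the form $t/W(t)=e^{W(t)}$,
\begin{equation*}
\frac{g_{\s}(\xi)}{\ln(1+|\xi|)}=\frac{\omega_{\s}(t)}{t}=\left(\frac{t}{W(t)}\right)^{1/(\s-1)}=e^{W(t)/(\s-1)}\longrightarrow\infty,
\end{equation*}
since $W(t)\to\infty$ by \eqref{PosledicaLambert1.5} and $\s>1$. (Equivalently, this is condition $(\gamma)$ for the BMT function asymptotically equivalent to $g_{\s}$ recorded in Example~\ref{exampleBMT}.)

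With this in hand I would finish as follows. Assume $|\phi(\xi)|\le C e^{-\rho g_{\s}(\xi)}$ for some $\rho,C>0$ and write $(1+|\xi|)^a=e^{a\ln(1+|\xi|)}$. By the key estimate, there is $R>0$ such that $|a|\ln(1+|\xi|)\le \tfrac{\rho}{2}g_{\s}(\xi)$ whenever $|\xi|>R$; hence for such $\xi$,
\begin{equation*}
(1+|\xi|)^a|\phi(\xi)|\le C\,e^{a\ln(1+|\xi|)-\rho g_{\s}(\xi)}\le C\,e^{-\frac{\rho}{2}g_{\s}(\xi)}.
\end{equation*}
On the compact set $\{|\xi|\le R\}$ both $(1+|\xi|)^a|\phi(\xi)|$ and $e^{-\frac{\rho}{2}g_{\s}(\xi)}$ are continuous and the latter is bounded below by a positive constant, so after enlarging the constant the bound $(1+|\xi|)^a|\phi(\xi)|\le C' e^{-\frac{\rho}{2}g_{\s}(\xi)}$ holds for all $\xi\in\mathbb R$. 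This yields $(1+|\xi|)^a\phi\in\mathbf{\Gamma}_{\s}(\mathbb R)$ with decay parameter $\rho/2$.

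I do not anticipate a serious obstacle: the entire argument rests on the super-logarithmic growth of $g_{\s}$, which renders every polynomial factor harmless. The only mild point of care is that the implication must also hold for negative exponents, as the converse direction needs it; but this is automatic, since the key estimate controls $|a|\ln(1+|\xi|)$ irrespective of the sign of $a$.
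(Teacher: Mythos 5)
Your proof is correct, and it delivers the same conclusion as the paper's (decay parameter $\rho/2$ after absorbing the weight), but the key step is justified by a different mechanism. The paper also splits $e^{-\rho g_{\sigma}}=e^{-\frac{\rho}{2}g_{\sigma}}e^{-\frac{\rho}{2}g_{\sigma}}$, but it then bounds the factor $\max\{1,2^{a}|\xi|^{a}\}e^{-\frac{\rho}{2}g_{\sigma}(\xi)}$ by a finite supremum, rewriting it as $\sup_{t>0} f_{\rho/2,\sigma}(t)/t^{a}$ and invoking \eqref{OcenafNiz}, i.e.\ the associated-function machinery behind Proposition~\ref{TeoremaAsocirana}; the converse implication is then dismissed as ``similar.'' You instead prove directly from $(W2)$ and \eqref{PosledicaLambert1.5} that $g_{\sigma}(\xi)/\ln(1+|\xi|)=e^{W(\ln(1+|\xi|))/(\sigma-1)}\to\infty$, i.e.\ $\ln(1+|\xi|)\prec g_{\sigma}(\xi)$ (this is exactly BMT condition $(\gamma)$ for $g_{\sigma}$, cf.\ Example~\ref{exampleBMT}), absorb $e^{|a|\ln(1+|\xi|)}$ for $|\xi|>R$, and finish on $\{|\xi|\le R\}$ by compactness, where the continuity of $g_{\sigma}$ at the origin (Lemma~\ref{spanskalema}~b)) gives the needed positive lower bound for $e^{-\frac{\rho}{2}g_{\sigma}}$. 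Your route is more self-contained (no appeal to the sequences $M^{\tau,\sigma}_p$), treats real exponents of either sign uniformly — which matters, since the paper's supremum over all $\xi\neq 0$ is delicate for $a<0$ and really only works away from the origin — and your reduction of the equivalence to a single one-sided implication, applied to $(1+|\xi|)^{a}\phi$ with exponent $-a$, makes the converse direction explicit rather than leaving it as an exercise.
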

\begin{proof}
Let $\phi\in \mathbf{\Gamma}_{\sigma} (\mathbb R)$ and $a\in\mathbb{R}$. Then by \eqref{NejednakostPaley}, exist $\rho>0, C>0$ such that
\begin{align*}
    (1+|\xi|)^a |\phi (\xi)|
    &\leq (1+|\xi|)^a  C\exp\left\{-\rho \frac{\ln^{\frac{\sigma}{\sigma-1}} (1+|\xi|)}{ W^{\frac{1}{\sigma-1}} (\ln(1+|\xi|))}\right\}\\
    &= C (1+|\xi|)^a e^{-\rho \,g_{\sigma}(\xi)}  \\
    &\leq C \max\{1, 2^{a}|\xi|^a\} e^{-\frac{\rho}{2}\,g_{\sigma}(\xi)} e^{-\frac{\rho}{2}\,g_{\sigma}(\xi)}\\
    &\leq C_1 e^{-\frac{\rho}{2}\,g_{\sigma}(\xi)},\quad\quad |\xi|\geq 1,
\end{align*}
for $\displaystyle C_1 := C \sup_{\xi \neq 0} \{ \max\{1, 2^{a}|\xi|^a\} e^{-\frac{\rho}{2}\,g_{\sigma}(\xi)} \}.$ Note that $C_1$ is finite, because
$$\sup_{\xi\neq 0} |\xi|^a e^{-\frac{\rho}{2}\,g_{\sigma}(\xi)} = \sup_{t>0}  \frac{f_{\frac{\rho}{2},\sigma}(t)}{t^{a}} <\infty,$$
which follows easily by \eqref{OcenafNiz}. The other implication is trivial.
\iffalse
    Let $\phi\in \mathbf{\Gamma}_{\sigma} (\mathbb R)$ and $a>0$. Then by \eqref{NejednakostPaley} and $|\xi|\leq 1+|\xi|$, we have
    \begin{align*}
        |\xi|^a |\phi (\xi)| 
        &\leq  \exp\{a \ln (1+|\xi|)\} C\exp\left\{-\rho \frac{\ln^{\frac{\sigma}{\sigma-1}} (1+|\xi|)}{ W^{\frac{1}{\sigma-1}} (\ln(1+|\xi|))}\right\}\\
        & \leq C \exp\left\{-(\rho -a) \frac{\ln^{\frac{\sigma}{\sigma-1}} (1+|\xi|)}{ W^{\frac{1}{\sigma-1}} (\ln(1+|\xi|))}\right\}= C e^{-(\rho-a)g_{\sigma}(\xi)},\quad \xi\in\mathbb R,
    \end{align*} where $g_{\sigma}$ is given in \eqref{BazicnaFunkcija}.
    The last inequality follows from $x\geq W(x)$ for $ x > 0$, where $x=\ln (1+|\xi|)>0$. By choosing $\rho>a$, we get $\xi^a \phi (\xi)\in \mathbf{\Gamma}_{\sigma} (\mathbb R)$. The other implication is trivial.
\fi
\end{proof}

\begin{rem}
In \cite{Javier02}, it was noted that the sequence 
\[
M^{\tau,\sigma}_0 =1 \quad \text{and}\quad M^{\tau,\sigma}_p = p^{\tau p^{\sigma}}, \qquad p \in \mathbb{N},
\]
satisfies a strong non-quasianalyticity condition for every choice of $\tau > 0$ and $\sigma > 1$, i.e.,
\begin{align*}
\sum_{q=p}^{\infty} \frac{M_{q}^{\tau,\sigma}}{(q+1)\, M_{q+1}^{\tau,\sigma}}
   \leq C\, \frac{M_{p}^{\tau,\sigma}}{M_{p+1}^{\tau,\sigma}}, 
   \qquad p \in \mathbb{N}_0.  
\end{align*}
Moreover, if $\displaystyle m^{\tau,\sigma}_p = \frac{M^{\tau,\sigma}_{p+1}}{M^{\tau,\sigma}_p}$, $p \in \mathbb{N}_0$, the stronger assertion holds: the gamma index of the sequence $\mathbb M^{\tau,\sigma}=\{M^{\tau,\sigma}_p\}_{p\in \mathbb{N}_0}$, given by
\[
\gamma(\mathbb M^{\tau,\sigma})
   = \sup \left\{ \gamma>0 :\; 
        (\exists C>0)(\forall p\in\mathbb N_0)(\forall q\ge p)\,
        \frac{m^{\tau,\sigma}_p}{(p+1)^{\gamma}}
        \le C\, \frac{m^{\tau,\sigma}_q}{(q+1)^{\gamma}} 
     \right\},
\]
satisfies $\displaystyle \gamma(\mathbb M^{\tau,\sigma}) = \infty$.

This implies that the corresponding function classes $\mathcal{E}_{\tau,\sigma}(\mathbb{R})$, $\tau > 0$, $\sigma > 1$,  admit (ultraholomorphic) extensions to unbounded sectors in $\mathbb{C}$ of the form
\begin{equation}\label{defsektora}
S_{\gamma} := 
\left\{ z \in \mathcal{R} : |\arg(z)| < \frac{\gamma \pi}{2} \right\},
\end{equation}
for arbitrary $\gamma>0$, where $\mathcal{R}$ denotes the Riemann surface of the logarithm.  
%Next we briefly discuss such extensions which will be used in Section~\ref{section:example} to construct an example of function in $\mathcal{E}_{\sigma}(\mathbb{R})$.
\end{rem}

\subsection{An example of the extended Gevrey function}
\label{section:example}
First, we discuss the complex extension to sectors of the functions defined in \eqref{BazicnaFunkcija}. For our purposes, it is sufficient to consider unbounded sectors $S_{\gamma}$ given in \eqref{defsektora} with $0 < \gamma \leq 2$, taking into account only the principal branch of the logarithm.

If $T$ and $S$ are two unbounded sectors, we say that $T$ is a \emph{proper subsector} of $S$ if $\overline{T} \subseteq S \cup \{0\}$. By taking the Riemann surface of the logarithm $\mathcal{R}$ as our initial set, we ensure that the vertex $0$ is not included in ${T}$. For example, note that $S_2 = \mathbb{C} \setminus (-\infty,0]$ and $S_1 = \{ z \in \mathbb{C} \mid \mathrm{Re}(z) > 0 \}$; in this case, $S_1$ is a proper subsector of $S_2$.

We will use the following Lemma from \cite{Javier02}.

\begin{lema}
\label{spanskalema0}
Let $\tau>0$ and $\sigma>1$. Set $\displaystyle a_{\tau , \sigma } = \left(\frac{\sigma - 1}{\tau \sigma}\right)^{\frac{1}{\sigma - 1}}$ and $\displaystyle b_{\tau , \sigma} =  \frac{\sigma - 1}{\tau \sigma} \, e^{\frac{\sigma - 1}{\sigma}}$. 
\begin{itemize}
\item[a)] Function $\displaystyle g_{\tau,\sigma}(x)=\frac{\ln^{\frac{\sigma}{\sigma-1}} (1+x)}{  W^{\frac{1}{\sigma-1}} (b_{\tau,\sigma}\ln(1+x))}$ is positive and strictly increasing on $\mathbb R_{+}$.
\item [b)] Set
 \begin{align*}
 %\label{FunkcijaE}
 e_{\tau,\sigma}(z)=z \exp \left\{ - a_{\tau , \sigma} W^{-\frac{1}{\sigma-1}}( b_{\tau , \sigma}\ln (1+z)) \ln^{\frac{\sigma}{\sigma-1}}(1+z) \right\},\quad z\in  S_2,  
 \end{align*}
 where $\ln(\cdot)$ denotes a principal branch of ${\rm Ln}[\cdot]$. Then for every proper subsector $T$ in $S_2$  there exists $C_1,C_2,K_1,K_2>0$ such that
\begin{equation}
\label{NejednakostEtausigma}
 C_2 e_{\tau,\sigma}(K_2|z|)\leq|e_{\tau,\sigma}(z)|\leq C_1 e_{\tau,\sigma}(K_1|z|),\quad z\in T.
 \end{equation}
\end{itemize}
\end{lema}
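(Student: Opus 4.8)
The plan is to treat the two parts separately: (a) by a direct derivative computation, and (b) by a sectorial asymptotic analysis of $\ln(1+z)$ and of the complex Lambert $W$ function. For part (a), I would substitute $u=\ln(1+x)$, which is a strictly increasing bijection of $\mathbb{R}_+$ onto $\mathbb{R}_+$, so it suffices to show that $h(u)=u^{p}W^{-q}(b_{\tau,\sigma}u)$ is positive and strictly increasing on $\mathbb{R}_+$, where $p=\frac{\sigma}{\sigma-1}$ and $q=\frac{1}{\sigma-1}$ (note $p-q=1$). Positivity is immediate from $(W1)$, since $W(y)>0$ for $y>0$. For monotonicity I differentiate, using \eqref{prviizvodlambert} to replace $W'(b_{\tau,\sigma}u)$, and after simplification obtain
\[
h'(u)=u^{p-1}W^{-q}(b_{\tau,\sigma}u)\left[\,p-\frac{q}{1+W(b_{\tau,\sigma}u)}\,\right].
\]
Since $W(b_{\tau,\sigma}u)>0$ the bracket exceeds $p-q=1>0$, whence $h'>0$. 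This part is routine, and the precise values of $a_{\tau,\sigma},b_{\tau,\sigma}$ play no role beyond positivity.

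For part (b), write $e_{\tau,\sigma}(z)=z\exp\{-a_{\tau,\sigma}\,G(z)\}$ with $G(z)=\ln^{p}(1+z)\,W^{-q}(b_{\tau,\sigma}\ln(1+z))$, so that $|e_{\tau,\sigma}(z)|=|z|\exp\{-a_{\tau,\sigma}\,\Rp G(z)\}$, while $e_{\tau,\sigma}(Kr)=Kr\exp\{-a_{\tau,\sigma}g_{\tau,\sigma}(Kr)\}$ for $r=|z|$, since $G$ restricted to $\mathbb{R}_+$ coincides with $g_{\tau,\sigma}$. Absorbing the harmless factor $K$ into the constants, \eqref{NejednakostEtausigma} reduces to producing $K_1,K_2>0$ and $c>0$ with $g_{\tau,\sigma}(K_1 r)-c\le \Rp G(z)\le g_{\tau,\sigma}(K_2 r)+c$ for $z\in T$. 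First I would dispose of bounded $|z|$: on $\overline{T}\setminus\{0\}$ the function $e_{\tau,\sigma}$ is analytic and nonvanishing (the exponential never vanishes, $z\ne0$, and for $z$ in a proper subsector neither $1+z\in(-\infty,0]$ nor $b_{\tau,\sigma}\ln(1+z)\in(-\infty,-e^{-1}]$ can occur, as both would force $\arg z=\pi$), so comparability on any annulus $0<|z|\le R$ follows by continuity and compactness, while near $0$ one has $e_{\tau,\sigma}(z)\sim z$. Thus only large $r$ matters.

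The core is the sectorial asymptotics. For $z=re^{i\phi}$ with $|\phi|\le\theta_0<\pi$ and $r\to\infty$ I expand $\ln(1+z)=\ln r+i\phi+O(1/r)$, which gives $\Rp\ln(1+z)=\ln(1+|z|)+o(1)$, $|\ln(1+z)|=\ln(1+|z|)+o(1)$, and $\arg\ln(1+z)=O(1/\ln r)\to0$ uniformly in $\phi$. Feeding this into the complex analogue of $(W3)$, namely $W(w)\sim\ln w$ with $\arg W(w)\to0$ as $w\to\infty$ in $|\arg w|<\pi$, yields $|W(b_{\tau,\sigma}\ln(1+z))|\asymp W(b_{\tau,\sigma}\ln(1+|z|))$ and $\arg W(b_{\tau,\sigma}\ln(1+z))\to0$. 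Consequently $\arg G(z)=p\,\arg\ln(1+z)-q\,\arg W(b_{\tau,\sigma}\ln(1+z))\to0$, so
\[
\Rp G(z)=|G(z)|\cos(\arg G(z))=|\ln(1+z)|^{p}\,|W(b_{\tau,\sigma}\ln(1+z))|^{-q}\bigl(1-O((\ln r)^{-2})\bigr),
\]
which shares the leading behaviour $(\ln r)^{p}(\ln\ln r)^{-q}$ with $g_{\tau,\sigma}(r)$, with a deviation $|\Rp G(z)-g_{\tau,\sigma}(r)|=O\bigl((\ln r)^{p-2}(\ln\ln r)^{-q}\bigr)$.

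Finally I would match this deviation by rescaling. Since $g_{\tau,\sigma}(Kr)-g_{\tau,\sigma}(r)\asymp p\ln K\,(\ln r)^{p-1}(\ln\ln r)^{-q}$ grows one logarithmic power faster than the deviation above, choosing any $K_1<1<K_2$ gives $g_{\tau,\sigma}(K_1 r)\le \Rp G(z)\le g_{\tau,\sigma}(K_2 r)$ for all large $r$, using the monotonicity from part (a); together with the bounded-$|z|$ estimate this yields \eqref{NejednakostEtausigma}. (When $\sigma\ge2$, i.e.\ $p\le2$, the deviation is already bounded and one may take $K_1=K_2=1$.) I expect the main obstacle to be the uniform control of the \emph{complex} principal branch of $W$ on the image cone $\{b_{\tau,\sigma}\ln(1+z):z\in T\}$ — both its modulus and, crucially, the vanishing of its argument — since the reduction of $\Rp G(z)$ to its modulus and the subsequent power-counting both rest on that cone collapsing onto $\mathbb{R}_+$.
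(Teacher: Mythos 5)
First, a point of order: the paper does not prove this lemma at all --- it is imported verbatim from \cite{Javier02} (``We will use the following Lemma from \cite{Javier02}''), so there is no internal proof to compare against, and your argument must be judged on its own merits. Part (a) is complete and correct: after the substitution $u=\ln(1+x)$, your computation of $h'(u)$ via \eqref{prviizvodlambert} is exactly right, and the bracket indeed exceeds $p-q=1>0$. Part (b) is a sound route to \eqref{NejednakostEtausigma}, and it is genuinely different in spirit from the cited source: in \cite{Javier02} the function $e_{\tau,\sigma}$ arises as a flat function in ultraholomorphic classes and the sectorial comparability comes from that machinery (nonquasianalyticity/proximate-order type arguments), whereas you argue directly through uniform sectorial asymptotics of $\ln(1+z)$ and of the complex principal branch of $W$; the needed expansion $W(w)=\ln w-\ln\ln w+o(1)$, uniformly for $|\arg w|\le \pi-\delta$, is available in the paper's own reference \cite{LambF}. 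The elementary route buys self-containedness at the cost of exactly the uniform control of complex $W$ that you correctly single out as the crux. Your treatment of bounded $|z|$ (analyticity and nonvanishing on $\overline{T}\setminus\{0\}$, plus $e_{\tau,\sigma}(z)\sim z$ at the origin) is also correct.

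The one step that is not justified as written is the deviation bound $|\Rp G(z)-g_{\tau,\sigma}(r)|=O\bigl((\ln r)^{p-2}(\ln\ln r)^{-q}\bigr)$: it does not follow from ``$|W(b_{\tau,\sigma}\ln(1+z))|\asymp W(b_{\tau,\sigma}\ln(1+r))$ and $\arg W\to 0$'' alone, since a two-sided comparison with unspecified constants would permit a deviation of order $g_{\tau,\sigma}(r)$ itself, which the rescaling gain $g_{\tau,\sigma}(Kr)-g_{\tau,\sigma}(r)\asymp(\ln r)^{p-1}(\ln\ln r)^{-q}$ cannot absorb. The sharp bound is in fact true, but it requires the quantitative observation that the perturbation is almost purely imaginary: with $w=b_{\tau,\sigma}\ln(1+z)$ and $w_0=b_{\tau,\sigma}\ln(1+r)$ one has $\Rp(w-w_0)=O(1/r)$ while $\operatorname{Im}(w-w_0)=b_{\tau,\sigma}\arg(1+z)+O(1/r)$ is bounded; since $W'(w_0)$ is real and of size $\asymp 1/\ln r$, the first-order term $iW'(w_0)\operatorname{Im}(w-w_0)$ is imaginary and moves $|W(w)|$ only at second order, giving $|W(w)|=W(w_0)+O(1/\ln^2 r)$, and likewise $|\ln(1+z)|=\ln(1+r)+O(1/\ln r)$ with relative error $O(1/\ln^{2}r)$; the same rates give $\arg G(z)=O(1/\ln r)$, which is what the cosine step actually needs (mere convergence to $0$ is not enough). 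That said, your overall scheme is robust to this gap: even the crude first-order bound $|W(w)-W(w_0)|=O(1/\ln r)$ yields a relative error $O\bigl(1/(\ln r\,\ln\ln r)\bigr)=o(1/\ln r)$ in $|W|^{-q}$, which the rescaling gain still dominates, so \eqref{NejednakostEtausigma} follows with any $K_1<1<K_2$. The only casualty is your parenthetical claim that $K_1=K_2=1$ suffices for $\sigma\ge 2$: that genuinely requires the sharp second-order deviation bound, so it should either be proved via the imaginary-perturbation argument above or dropped.
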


In the next lemma we discuss properties of the functions given in \eqref{BazicnaFunkcija}.
%Consider elements of $S_{\gamma}$ that corresponds to the principal branch of the logarithm. Then $T\prec S_{\gamma}$ may be viewed as a unbounded sector in the strip $(-\pi,\pi]$ .

%For instance, the princpipal branch of the Lambert function described in $(W5)$ is one example of $T$.

\begin{lema}
    \label{spanskalema}
Let $\sigma>1$. 
\begin{itemize}
\item [a)]  Function $\displaystyle g_{\sigma}(x)=\omega_{\sigma}(\ln (1+x))=\frac{\ln^{\frac{\sigma}{\sigma-1}} (1+x)}{ W^{\frac{1}{\sigma-1}} (\ln(1+x))}$ is positive and strictly increasing on $\mathbb R_{+}$. Moreover, for any $C>0$ we have
\begin{equation}
\label{IzlazakKonstanti}
 g_{\sigma}(C x)\asymp g_{\sigma}(x),\quad x\to \infty.
\end{equation}  %Moreover, $f_{\rho,\sigma}(1/t)$ decays faster than polynomial at infinity.
\item[b)] For every $a,b>0$
    \begin{align*}
    %\begin{split}
    %\label{ocenae}
    %&\lim_{x\to \infty} x^k e^{-\omega_{\sigma} (x)} = 0, \quad
    \lim_{x\to \infty} e^{ax} e^{-b\omega_{\sigma} (x)} = 0,  \quad
    \omega_{\sigma}(x) \sim x,\,\, x\to 0,\quad
    \lim_{x\to 0^+} \omega_{\sigma}(x) = 0.
    %\end{split}
    \end{align*}
    In particular,  $\displaystyle \omega_{\sigma}(x) = \frac{x^{\frac{\sigma}{\sigma-1}}}{W^{\frac{1}{\sigma-1}}(x)}$ is continuous on $[0,+\infty)$.
    %which implies that $f_{\rho, \sigma}$ is continous on $\mathbb{R}$.

\item[c)]
Function $f_{\rho,\sigma}(x)=\exp\{-\rho g_{\sigma}(1/x)\}$ is strictly increasing and continuous on $[0,\infty)$ with $0 \leq |f_{\rho,\sigma}(x)|<1$. 

For every $\varepsilon>0$
\begin{align*}
    \int_0^{\varepsilon} f_{\rho,\sigma} (x) \, dx < \infty.
\end{align*}

Moreover, a family $\{f_{\rho,\sigma}(x)\}, \rho >0 , \sigma >1$ is decreasing  with respect to $\rho$, and increasing with respect to $\sigma$ for every fixed $x\in \mathbb R_+$.

\item[d)]
For every proper subsector $T$ in $S_2$ there exist $C_1,C_2,\rho',\rho''>0$  such that
\begin{align}
\label{eq.Bounds_e_h_subsectors_real_axis}
%\begin{split}
C_2 f_{\rho'' , \sigma } \left( |z|\right)
\leq  \left| f_{\rho_{\sigma},\sigma} (z) \right|
\leq C_1 f_{\rho' , \sigma } \left( |z| \right),\quad z\in T,
   % \end{split} 
\end{align} where $\rho_{\sigma}=e^{-1/\sigma}$ and
\begin{align*}
%\label{OptimalFlat}
\displaystyle f_{\rho_{\sigma},\sigma} (z)=\exp \left\{ - e^{-\frac{1}{\sigma}} W^{-\frac{1}{\sigma-1}}\left(\ln \left(1+\frac{1}{z}\right)\right) \ln^{\frac{\sigma}{\sigma-1}} \left(1+\frac{1}{z}\right) \right\}, \quad z\in S_2.  
\end{align*}
 %C_1e_{\tau,\sigma}(K_1|z|)\le |e_{\tau,\sigma}(z)|\le C_2e_{\tau,\sigma}(K_2|z|),\quad z\in T.
% C_1 &K_1 |z| \, \exp \left( -a_{\tau , \sigma}  \frac{\ln ^{\frac{\sigma}{\sigma -1}} (K_1 |z| + 1)}{W^{\frac{1}{\sigma - 1}} (b_{\tau , \sigma} \ln (K_1 |z|+1))}   \right) \\
%&\leq \left| z \exp \left( -a_{\tau , \sigma} \frac{\ln ^{\frac{\sigma}{\sigma -1}} (z + 1)}{W^{\frac{1}{\sigma - 1}} (b_{\tau , \sigma} \ln (z + 1))}   \right) \right|\\
%&\leq C_2 K_2 |z| \, \exp \left( -a_{\tau , \sigma} \frac{\ln ^{\frac{\sigma}{\sigma -1}} (K_2 |z| + 1)}{W^{\frac{1}{\sigma - 1}} (b_{\tau , \sigma} \ln (K_2 |z|+1))}   \right), 
\end{itemize}

\end{lema}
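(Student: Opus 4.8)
The plan is to reduce everything to the identity $\omega_\sigma(x)=x\,e^{W(x)/(\sigma-1)}$, which is immediate from $(W2)$: the latter gives $x/W(x)=e^{W(x)}$, whence $x^{1/(\sigma-1)}/W^{1/(\sigma-1)}(x)=e^{W(x)/(\sigma-1)}$ and thus $g_\sigma(x)=\ln(1+x)\,e^{W(\ln(1+x))/(\sigma-1)}$. For part a), positivity holds since $\ln(1+x)>0$ and $W(\ln(1+x))>0$ for $x>0$ by $(W1)$, and strict monotonicity follows because $g_\sigma$ is a product of two positive, strictly increasing factors ($W$ is increasing by $(W1)$). For \eqref{IzlazakKonstanti} I would factor
\[
\frac{g_\sigma(Cx)}{g_\sigma(x)}=\frac{\ln(1+Cx)}{\ln(1+x)}\,\exp\Big\{\tfrac{1}{\sigma-1}\big(W(\ln(1+Cx))-W(\ln(1+x))\big)\Big\}
\]
and show the right-hand side tends to $1$: the first factor clearly does, and by the mean value theorem the exponent equals $\tfrac{1}{\sigma-1}W'(\xi)\ln\tfrac{1+Cx}{1+x}$ with $\xi\to\infty$, which tends to $0$ because $\ln\tfrac{1+Cx}{1+x}\to\ln C$ and $W'(\xi)=W(\xi)/(\xi(1+W(\xi)))\to0$ by \eqref{prviizvodlambert} and \eqref{PosledicaLambert1.5}. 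A ratio tending to $1$ gives $\asymp$.

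Parts b) and c) are routine consequences of the same factorization. Writing $e^{ax}e^{-b\omega_\sigma(x)}=\exp\{x(a-b\,e^{W(x)/(\sigma-1)})\}$ and using $W(x)\to\infty$ yields the first limit; for $x\to0$, \eqref{LambertU0} gives $e^{W(x)/(\sigma-1)}\to1$, hence $\omega_\sigma(x)\sim x$ and $\omega_\sigma(x)\to0$, which together with continuity on $(0,\infty)$ and $\omega_\sigma(0)=0$ proves continuity on $[0,\infty)$. For c) I set $f_{\rho,\sigma}(0)=0$; continuity and $0\le f_{\rho,\sigma}<1$ follow since $g_\sigma(1/x)>0$ on $(0,\infty)$ and $g_\sigma(1/x)\to\infty$ as $x\to0^+$, while strict monotonicity and integrability on $(0,\varepsilon)$ are immediate (the latter from $f_{\rho,\sigma}\le1$). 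Monotonicity in $\rho$ is clear, and monotonicity in $\sigma$ follows once more from the factored form: for fixed $t>0$ the map $\sigma\mapsto e^{W(t)/(\sigma-1)}$ is decreasing (since $W(t)>0$), so $g_\sigma(1/x)$ decreases in $\sigma$ and $f_{\rho,\sigma}$ increases.

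Part d) is the main obstacle, and the plan is to deduce it from Lemma \ref{spanskalema0} b). First I specialize the parameter to $\tau=\tfrac{\sigma-1}{\sigma}e^{(\sigma-1)/\sigma}$, for which one checks $b_{\tau,\sigma}=1$ and $a_{\tau,\sigma}=e^{-1/\sigma}=\rho_\sigma$; then $e_{\tau,\sigma}(z)=z\exp\{-\rho_\sigma W^{-1/(\sigma-1)}(\ln(1+z))\ln^{\sigma/(\sigma-1)}(1+z)\}$ and the algebraic identity $f_{\rho_\sigma,\sigma}(z)=z\,e_{\tau,\sigma}(1/z)$ holds. Since the inversion $z\mapsto1/z$ maps $S_2$ onto itself and sends each proper subsector $T$ to a proper subsector $1/T$ of the same opening, I apply \eqref{NejednakostEtausigma} on $1/T$, substitute $w=1/z$, and multiply by $|z|$. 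The real-variable identity $|z|\,e_{\tau,\sigma}(K/|z|)=K\,f_{\rho_\sigma,\sigma}(|z|/K)$ then gives
\[
C_2K_2\,f_{\rho_\sigma,\sigma}(|z|/K_2)\le|f_{\rho_\sigma,\sigma}(z)|\le C_1K_1\,f_{\rho_\sigma,\sigma}(|z|/K_1),\qquad z\in T.
\]

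It remains to absorb the dilations $|z|/K_j$ into a change of $\rho$. Writing $f_{\rho_\sigma,\sigma}(|z|/K)=\exp\{-\rho_\sigma g_\sigma(K/|z|)\}$ and using the scaling relation \eqref{IzlazakKonstanti} from part a), for $|z|$ near $0$ (i.e.\ $1/|z|\to\infty$) I bound $(1-\delta)g_\sigma(1/|z|)\le g_\sigma(K/|z|)\le(1+\delta)g_\sigma(1/|z|)$, which converts the two sides into $f_{\rho',\sigma}(|z|)$ and $f_{\rho'',\sigma}(|z|)$ with $\rho'=(1-\delta)\rho_\sigma$ and $\rho''=(1+\delta)\rho_\sigma$. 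For $|z|$ bounded away from $0$ every function is continuous and tends to $1$ as $|z|\to\infty$ (because $g_\sigma(1/|z|)\to0$), so the inequalities survive after enlarging the constants. The two points requiring genuine care are the verification that inversion preserves properness of the subsector and the uniform passage, over the whole unbounded sector, between the dilation $|z|/K$ and the shift in $\rho$; everything else is bookkeeping with the constants supplied by Lemma \ref{spanskalema0}.
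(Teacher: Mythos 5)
Your proof is correct, and for part d) --- the substantive point of the lemma --- it follows essentially the same route as the paper: the specialization $\tau_\sigma=\frac{\sigma-1}{\sigma}e^{(\sigma-1)/\sigma}$ so that $a_{\tau_\sigma,\sigma}=\rho_\sigma$ and $b_{\tau_\sigma,\sigma}=1$, the identity $f_{\rho_\sigma,\sigma}(z)=z\,e_{\tau_\sigma,\sigma}(1/z)$ (the paper states it as $e_{\tau_\sigma,\sigma}(z)=z\,f_{\rho_\sigma,\sigma}(1/z)$), the observation that $z\mapsto 1/z$ preserves proper subsectors of $S_2$, and the absorption of the dilation constants $K_1,K_2$ into modified exponents $\rho',\rho''$ via \eqref{IzlazakKonstanti}; you are in fact more careful than the paper about this last step, splitting $|z|$ small from $|z|$ bounded away from $0$, which the paper compresses into a single sentence. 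Where you genuinely diverge is in parts a)--c). For \eqref{IzlazakKonstanti} the paper invokes the result of \cite{TT-01} that $g_\sigma$ is asymptotically equivalent to a Braun--Meise--Taylor weight and then iterates the BMT condition $(\alpha)$, whereas you prove directly, via the mean value theorem applied to $W$ together with \eqref{prviizvodlambert} and \eqref{PosledicaLambert1.5}, that $g_\sigma(Cx)/g_\sigma(x)\to 1$; this is self-contained and strictly stronger than $\asymp$, and it is precisely what lets you take $\rho'=(1-\delta)\rho_\sigma$ and $\rho''=(1+\delta)\rho_\sigma$ arbitrarily close to $\rho_\sigma$ in d), a sharpening the paper's bound does not yield. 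Similarly, in a) the paper obtains positivity and strict monotonicity by identifying $g_\sigma=g_{\tau_\sigma,\sigma}$ in Lemma \ref{spanskalema0}, while you read them off the factorization $g_\sigma(x)=\ln(1+x)\,e^{W(\ln(1+x))/(\sigma-1)}$; in c) your monotonicity in $\sigma$ (monotone dependence of $e^{W(t)/(\sigma-1)}$ on $\sigma$, since $W(t)>0$) replaces the paper's logarithmic-derivative computation, and your integrability argument (boundedness by $1$ on a finite interval) replaces the paper's change of variables $s=1/x$. The trade-off is clear: the paper's proof leans on its surrounding toolkit (Lemma \ref{spanskalema0} and the BMT machinery of \cite{TT-01}), keeping the argument short but externally dependent, while yours is elementary and self-contained at the cost of redoing estimates those results already encapsulate.
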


\begin{proof}
Throughout the proof we set $\displaystyle \t_{\sigma}=\frac{\sigma - 1}{\sigma} \, e^{\frac{\sigma - 1}{\sigma}}$. 

%which implies that $b_{\t_{\sigma},\sigma}=1$. 

$a)$ Note that $g_{\sigma} = g_{\tau_{\sigma},\sigma}$, where $g_{\tau,\sigma}$ is the function introduced in Lemma~\ref{spanskalema0}. Therefore, $g_{\sigma}$ is positive and strictly increasing. Moreover, \cite[Theorem~3.1]{TT-01} implies that $\displaystyle g_{\sigma}(k) = \omega_{\sigma}(\ln(1+|k|))$, $k>0$, is equivalent to a BMT function (see also Example \ref{exampleBMT}). Hence it satisfies the BMT condition~$(\alpha)$, which implies ~\eqref{IzlazakKonstanti}.

%follows by successive application of $n_C$ times of $(\alpha)$, and the fact that $g_{\sigma}$ is increasing. More precisely, for the upper bound we can find $n_C\in\mathbb N_0$ such that $C\leq 2^{n_C}$, and therefore by successive application of $(\alpha)$ of $n_C$ times we get
%\begin{align*}
   % g_{\sigma} (C|x|)
    %\leq g_{\sigma} (2^{n_C} |x|)
    %\leq \dots
    %\leq C_1 g_{\sigma} (|x|),\quad x\in\mathbb R.
%\end{align*}
%For the lower bound, if $C\geq 1$ then we immediately have $g_{\sigma} (|x|) \leq g_{\sigma} (C|x|) $. On the another hand, if $0<C<1$ then there exists $n_C\in\mathbb N_0$ such that $1/2^{n_C} \leq C$, and similarly we obtain
%\begin{align*}
 %   g_{\sigma} (C|x|)
  %  \geq g_{\sigma} \left(\frac{1}{2^{n_C}} |x|\right)
   % \geq \dots
   % \geq C_2g_{\sigma} (|x|),\quad x\in\mathbb R.
%\end{align*}

%The fact that $f_{\rho,\sigma}(1/t)$ decays faster than polynomial at infinity, follows from
%\begin{align*}
 %   \exp\left\{-\rho \frac{\ln^{\frac{\sigma-1+1}{\sigma-1} } (1+t)}{W^{\frac{1}{\sigma-1}} (\ln (1+t))} \right\}
%    = (1+t)^{-\left( \frac{\ln (1+t)}{W(\ln (1+t))} \right)^{\frac{1}{\sigma-1}}}.
%\end{align*}

 %Let us first prove the statement about the behavior of $e^{W(x)}$ when $x\to +\infty$.

$b)$ By the Lemma \ref{LemaLambert} and $(W2)$ it is obvious that
 \begin{align*}
     &\lim_{x\to +\infty} \exp \left\{ ax - b\frac{ x^{\frac{\sigma}{\sigma - 1}}}{W^{\frac{1}{\sigma - 1}} (x)} \right\}
     = \lim_{x\to +\infty} \exp \left\{ x\left( a - b\left( \frac{x}{W(x)} \right)^{\frac{1}{\sigma-1}} \right) \right\}\\
     &=\lim_{x\to +\infty} \exp \left\{ x\left( a - b\,e^{\frac{1}{\sigma-1}W(x)} \right) \right\}=0.
     \end{align*}
  Moreover, by \eqref{LambertU0}
  \begin{align*}
      \lim_{x\to 0} \frac{\omega_{\sigma} (x)}{x}
      = \lim_{x\to 0} \frac{x^{\frac{\sigma}{\sigma-1}-1}}{W^{\frac{1}{\sigma-1}} (x)}
      = \lim_{x\to 0} \left(\frac{x}{W(x)}\right)^{\frac{1}{\sigma-1}}
      =1.
  \end{align*}

$c)$ The fact that $f_{\rho,\sigma}$ is strictly increasing and continuous on $[0,+\infty)$, follows from $a)$ and $b)$. In particular, for $\varepsilon>0$ we have
\begin{align*}
    \int_0^{\varepsilon} e^{-\rho \omega_{\sigma} \left( \ln \left( 1+\frac{1}{x} \right) \right)} \, dx
    = \int_{\frac{1}{\varepsilon}}^{+\infty} e^{-\rho \omega_{\sigma} \left( \ln \left( 1+s \right) \right)} \frac{1}{s^2} \, ds
    \leq \int_{\frac{1}{\varepsilon}}^{+\infty} \frac{ds}{s^2}
    <+\infty.
\end{align*}
The fact that $f_{\rho , \sigma}$ is a decreasing function with respect to $\rho$ follows directly from the definition of $f_{\rho , \sigma}$ in \eqref{BazicnaFunkcija}. For a fixed $x\in\mathbb R$, we note
\begin{align*}
    F_{x} (\sigma) := g_{\sigma} (x)>0, \quad \sigma>1.
\end{align*}
Then we have
\begin{align*}
   \frac{d}{d\sigma} F_{x} (\sigma)
    = \frac{F_{x} (\sigma)}{(\sigma-1)^2} \ln \frac{W\left(\ln\left( 1+|x| \right)\right)}{\ln \left( 1+|x| \right)}
    <0,
\end{align*}
because $x\geq W(x)$ for all $x>0$. This implies that $f_{\rho , \sigma}$ is an increasing function in terms of $\sigma$.
  
$d)$ Note that $e_{\t_{\sigma},\sigma}(z)=\displaystyle z f_{\rho_{\sigma},\sigma}(1/z)$ and hence \eqref{NejednakostEtausigma} implies that
 \begin{align}
\label{ocena}
    C_2 f_{{\rho_{\sigma}} , \sigma} \left( K_2/|z| \right)
    \leq |f_{{\rho_{\sigma} } , \sigma} \left( 1/z \right)|
    \leq C_1  f_{{\rho_{\sigma} } , \sigma} \left( K_1/|z| \right)
    , \quad z\in T.
\end{align} 
Since  $\arg\frac{1}{z}=-\arg z ,$ $ z\neq 0$, note that \eqref{ocena} remains valid if we put $\omega=1/z\in T$. Since $\displaystyle f_{{\rho_{\sigma} } , \sigma} \left( K |\omega| \right)=e^{-\rho_{\sigma}g_{\sigma}(1/(K|\omega|))}$, $K>0$, we can apply \eqref{IzlazakKonstanti} and \eqref{eq.Bounds_e_h_subsectors_real_axis} follows.
\end{proof}

In the next section we will construct a wavelet using a particular function from $\E_{\sigma}(\mathbb R)$. For $\sigma>1$  let us consider 
\begin{equation}
\label{PrimerExtended}
\displaystyle f_{\rho_{\sigma},\sigma} (x)=\exp \left\{ - \rho_{\sigma
} \,W^{-\frac{1}{\sigma-1}} \left(\ln \left(1+\frac{1}{|x|}\right)\right) \ln^{\frac{\sigma}{\sigma-1}} \left(1+\frac{1}{|x|}\right) \right\},\quad x\in \mathbb R,
\end{equation} where $\rho_{\sigma}=e^{-1/\sigma}$. First we prove that $f_{\rho_{\sigma},\sigma}$ is smooth but not analytic on $\mathbb R$. 

The following Lemma holds.

\begin{lema}
\label{LemmaFlat}
For $\sigma>1$ and $\rho_{\sigma}=e^{-1/\sigma}$, let $f_{\rho_{\sigma},\sigma}$ be as in  \eqref{PrimerExtended}. Then it holds $\lim_{x\to 0} f_{\rho_{\sigma},\sigma} ^{(j)} \left( x \right) = 0$ for all $j\in\mathbb N_0$.
\end{lema}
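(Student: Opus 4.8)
The plan is to show that for $x>0$ every derivative $f_{\rho_\sigma,\sigma}^{(j)}(x)$ is bounded by a fixed negative power of $x$ (up to a logarithmic factor) times $f_{\rho_\sigma,\sigma}(x)$ itself, and then to annihilate this polynomial growth using the super\nobreakdash-polynomial flatness of $f_{\rho_\sigma,\sigma}$ at the origin supplied by \eqref{OcenafNiz}. Since $f_{\rho_\sigma,\sigma}(x)=e^{-\rho_\sigma g_\sigma(1/|x|)}$ is even, it suffices to treat the one\nobreakdash-sided limit $x\to 0^+$, and then conclude for $x\to 0^-$ from $f_{\rho_\sigma,\sigma}^{(j)}(x)=(-1)^j f_{\rho_\sigma,\sigma}^{(j)}(-x)$. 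I first record the flatness estimate: for $x>0$ the function coincides with the member $f_{\rho,\sigma}$ of \eqref{BazicnaFunkcija} corresponding to $\rho=\rho_\sigma$, so \eqref{OcenafNiz} furnishes $\tau,B_1>0$ with $f_{\rho_\sigma,\sigma}(x)\le B_1 M^{\tau,\sigma}_p x^{p}$ for all $x>0$ and $p\in\mathbb N_0$; in particular $f_{\rho_\sigma,\sigma}(x)=o(x^{N})$ as $x\to0^+$ for every $N$.

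The heart of the argument is to estimate the derivatives of $G(x):=g_\sigma(1/x)$ near $0$. Writing $g_\sigma(u)=\omega_\sigma(\ln(1+u))$ with $\omega_\sigma(t)=t^{\sigma/(\sigma-1)}W(t)^{-1/(\sigma-1)}$, I would establish that, as $u\to\infty$,
$$
g_\sigma^{(n)}(u)=O\big((\ln u)^{c_n}\,u^{-n}\big),\qquad n\in\mathbb N,
$$
for suitable constants $c_n$. This combines three facts: first, $(W3)$ gives $W(t)=O(\ln t)$ and $1+W(t)\asymp\ln t$, which together with $(W4)$ yield $W^{(m)}(t)=O\big((\ln t)^{D_m}t^{-m}\big)$; second, applying the Leibniz and Fa\`a di Bruno rules to $\omega_\sigma(t)=t^{\alpha}W(t)^{-\beta}$, with $\alpha=\sigma/(\sigma-1)$ and $\beta=1/(\sigma-1)$, gives $\omega_\sigma^{(p)}(t)=O\big(t^{\alpha-p}(\ln t)^{F_p}\big)$; third, composing with $t=\ln(1+u)$, whose derivatives are $O\big((1+u)^{-j}\big)$, and invoking Fa\`a di Bruno once more collapses every term to the common order above. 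Transferring through the inner substitution $x\mapsto 1/x$, whose derivatives are $O(x^{-j-1})$, then gives $G^{(k)}(x)=O\big((\ln(1/x))^{c}\,x^{-k}\big)$ as $x\to0^+$.

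Finally, set $\phi(x)=-\rho_\sigma G(x)$, so that $f_{\rho_\sigma,\sigma}(x)=e^{\phi(x)}$ for $x>0$. By the Fa\`a di Bruno formula the $j$\nobreakdash-th derivative equals $f_{\rho_\sigma,\sigma}(x)\,B_j\big(\phi'(x),\dots,\phi^{(j)}(x)\big)$, where $B_j$ is the complete Bell polynomial. Each monomial of $B_j$ is a product $\prod_i(\phi^{(k_i)}(x))^{m_i}$ with $\sum_i k_i m_i=j$, hence of order $O\big((\ln(1/x))^{C}x^{-j}\big)$ by the previous step, and therefore $|f_{\rho_\sigma,\sigma}^{(j)}(x)|\le C_j (\ln(1/x))^{C} x^{-j} f_{\rho_\sigma,\sigma}(x)$. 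Applying the flatness estimate with $p=j+2$ bounds the right\nobreakdash-hand side by a constant multiple of $(\ln(1/x))^{C}x^{2}$, which tends to $0$ as $x\to0^+$. By evenness the same holds as $x\to0^-$, so $\lim_{x\to0}f_{\rho_\sigma,\sigma}^{(j)}(x)=0$ for every $j\in\mathbb N_0$, as claimed.

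I expect the main obstacle to be the bookkeeping in the middle paragraph: verifying that the iterated chain\nobreakdash- and product\nobreakdash-rule expansions of the Lambert\nobreakdash-$W$ factor together with the logarithmic inner map $\ln(1+u)$ never produce worse than a fixed power of $\ln u$, while leaving the expected decay $u^{-n}$ intact. The saving grace is that only crude upper bounds are needed, not sharp asymptotics, so the logarithmic exponents $c_n,F_p,D_m$ may be left implicit.
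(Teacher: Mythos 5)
Your proposal is correct and follows essentially the same route as the paper's proof: both expand the derivatives via Leibniz and the Fa\'a di Bruno formula, use $(W4)$ together with $W(t)\asymp\ln t$ to show that the exponent contributes at most $x^{-j}$ times logarithmic factors, and then annihilate this growth by the super-polynomial flatness of $f_{\rho_{\sigma},\sigma}$ at the origin. The only differences are organizational: the paper tracks the explicit form of the summands \eqref{fsigmaT} and, after substituting $s=\ln\left(1+\frac{1}{x}\right)$, invokes Lemma \ref{spanskalema}~b) (i.e.\ $e^{as}e^{-b\omega_{\sigma}(s)}\to 0$), whereas you carry crude $O$-bounds throughout and quote \eqref{OcenafNiz} with $p=j+2$ --- two packagings of the same flatness fact.
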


Proof of the Lemma \ref{LemmaFlat} is rather technical and therefore given in the Appendix. In the following proposition we capture the regularity of $f_{\rho_{\sigma},\sigma}$ by means of the classes $\mathcal{E}_{\sigma}(\mathbb{R})$.

%For the proof of the following Lemma we refer to Appendix.

%\begin{lema}
%\label{LemaApendix}
%For $\sigma>1$ and $\rho_{\sigma}=e^{-1/\sigma}$ we have .
%\end{lema}

\begin{te}
\label{PrimerExtGevrey}
Let $\sigma>1$, $\rho_{\sigma}=e^{-1/\sigma}$, and let $f_{\rho_{\sigma},\sigma}$ be given by  \eqref{PrimerExtended}. Then $f_{\rho_{\sigma},\sigma} \in\mathcal E_{\sigma}(\mathbb R)$.
\end{te}
\begin{proof}

Since $f_{\rho_{\sigma},\sigma}$ is even on $\mathbb R$ and Lemma \ref{LemmaFlat} holds, we estimate the derivatives of $f_{\rho_{\sigma},\sigma}$ on $\mathbb R_+$. Choose subsector $T$ in $S_2$ (for instance $S_1$) such that for all $x>0$ it holds $D=\{z\in \mathbb C\,\,|\,\,|z-x|\leq \frac{x}{2}\}\subset T$.

Note that $|z-x|=\frac{x}{2}$ implies $\frac{x}{2}\leq |z|\leq\frac{3x}{2}$. Using Cauchy's integral formula and (\ref{eq.Bounds_e_h_subsectors_real_axis}) we obtain
\begin{align}
\label{NejednakostLema1}
    \left| \frac{d^j}{dx^j} f_{\rho_{\sigma},\sigma} (x) \right|
    &= \left| \frac{j!}{2\pi i} \int_{|z-x|
    =\frac{x}{2}} \frac{f_{\rho_{\sigma},\sigma}(z)}{(z-x)^{j+1}} \, dz \right|
    \leq \frac{j!}{2\pi } \int_{|z-x|=\frac{x}{2}} \frac{|f_{\rho_{\sigma},\sigma}(z)|}{|z-x|^{j+1}} \, |dz|\nonumber\\
    &\leq \frac{j!2^j}{\pi x^{j+1}}  \int_{|z-x|=\frac{x}{2}} C_1 \, f_{\rho',\sigma} \left( |z| \right) \, |dz|\nonumber\\
    &\leq \frac{C_1 j!2^j}{\pi x^{j+1}}  \int_{|z-x|
    =\frac{x}{2}} f_{\rho',\sigma} \left( \frac{3x}{2} \right) \, |dz|\nonumber\\
    &= \frac{C_1 j!2^j}{x^j} \, f_{\rho',\sigma} \left( \frac{3x}{2} \right), \quad x>0,\,\, j\in \mathbb N_0,
   \end{align}
for suitable constants  $C_1,\rho>0$, where we also used that $f_{\rho,\sigma}$ are increasing.

If the function $g_{\sigma}$ is given in \eqref{BazicnaFunkcija}, then \eqref{IzlazakKonstanti} implies

\begin{equation}
\label{NejednakostLema2}
 f_{\rho',\sigma} \left( \frac{3x}{2} \right)
 =\exp\left\{-\rho' g_{\sigma}\Big(\frac{2}{3x}\Big)\right\} 
 \leq L \exp\left\{-\rho'' g_{\sigma}\Big(\frac{1}{x}\Big)\right\}
 =L f_{\rho'',\sigma} \left( x \right),
 \end{equation} for $x>0$ and suitable $L,\rho''>0$.

Take $\mathbb K\subset\subset \mathbb R_+$. Using \eqref{NejednakostLema1}, \eqref{NejednakostLema2}, \eqref{OcenafNiz}, and simple inequality $j!\leq C^{j^\sigma}$, $C>1$, $j\in\mathbb N_0$, for suitable $C_2,\tau>0$ we obtain
$$\left| \frac{d^j}{dx^j} f_{\rho_{\sigma},\sigma} (x) \right|\leq C_1 (2C)^{j^{\sigma}} \frac{f_{\rho'',\sigma}\left( x \right)}{x^j}  \leq C^{j^{\sigma}+1}_2 M^{\tau,\sigma}_j,\quad x\in \mathbb K,\;j\in\mathbb N_0,$$ 
and this proves the assertion.
\end{proof}

We conclude this section by proving that the choice of $\rho_{\sigma}=e^{-1/\sigma}$ in \eqref{PrimerExtended} is not essential.

\begin{prop}
\label{proizvoljnorho}
 Let $\sigma>1$. Then for any $\rho>0$ 
 $$ f_{\rho,\sigma}\in \E_{\sigma}(\mathbb R)\backslash \bigcup_{1<\sigma'<\sigma}\E_{\sigma'}(\mathbb R),$$ 
 where $f_{\rho,\sigma}$ are given in \eqref{BazicnaFunkcija}.
\end{prop}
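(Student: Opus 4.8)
The statement contains a positive inclusion, $f_{\rho,\sigma}\in\E_{\sigma}(\mathbb R)$, and a sharpness claim, $f_{\rho,\sigma}\notin\E_{\sigma'}(\mathbb R)$ for every $\sigma'\in(1,\sigma)$. For the inclusion the plan is to reduce an arbitrary $\rho>0$ to the distinguished value $\rho_{\sigma}=e^{-1/\sigma}$ already treated in Proposition \ref{PrimerExtGevrey}. The key observation is that on the complex extension one has $|f_{\rho,\sigma}(z)|=|f_{\rho_{\sigma},\sigma}(z)|^{\rho/\rho_{\sigma}}$, since $|e^{w}|=e^{\Rp w}$ and the two exponents differ only by the positive real factor $\rho/\rho_{\sigma}$. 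Raising the two-sided bound \eqref{eq.Bounds_e_h_subsectors_real_axis} to the power $\rho/\rho_{\sigma}$ and using $f_{\rho',\sigma}(\,\cdot\,)^{\rho/\rho_{\sigma}}=f_{\rho'\rho/\rho_{\sigma},\sigma}(\,\cdot\,)$ reproduces exactly the same form of estimate for $f_{\rho,\sigma}$, namely $\widetilde C_2\,f_{\tilde\rho'',\sigma}(|z|)\le|f_{\rho,\sigma}(z)|\le\widetilde C_1\,f_{\tilde\rho',\sigma}(|z|)$ on any proper subsector $T$ of $S_2$. Consequently the Cauchy-integral argument of Proposition \ref{PrimerExtGevrey} applies verbatim, and flatness at the origin (the analogue of Lemma \ref{LemmaFlat}) follows from the same estimate together with \eqref{OcenafNiz}, since $f_{\tilde\rho'',\sigma}(x)/x^{j}\le B_1 M^{\tau,\sigma}_{j+1}x\to0$ as $x\to0^{+}$. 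This yields $f_{\rho,\sigma}\in\E_{\sigma}(\mathbb R)$.

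For the sharpness claim I would argue by contradiction: suppose $f_{\rho,\sigma}\in\E_{\tau',\sigma'}(\mathbb R)$ for some $\sigma'\in(1,\sigma)$ and $\tau'>0$. Fix a compact neighbourhood $[0,x_0]$ of the origin. Because $f_{\rho,\sigma}$ is flat at $0$, Taylor's formula with Lagrange remainder gives, for each $p\in\mathbb N$ and $x\in(0,x_0)$, a point $\xi\in(0,x)$ with $f_{\rho,\sigma}(x)=\frac{x^{p}}{p!}\,f_{\rho,\sigma}^{(p)}(\xi)$. Estimating the remainder by the class bound \eqref{DerivativeEstimates} and optimising over $p$ leads to $f_{\rho,\sigma}(x)\le C\inf_{p}\frac{C^{p^{\sigma'}}}{p!}\,M^{\tau',\sigma'}_p\,x^{p}=C\,e^{-\widetilde T(1/x)}$, where $\widetilde T(y)=\sup_{p}\ln\frac{y^{p}}{a_p}$ is the associated function (in the sense of the definition preceding Proposition \ref{TeoremaAsocirana}) of the sequence $a_p=C^{p^{\sigma'}}M^{\tau',\sigma'}_p/p!$.

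The decisive step is then to control $\widetilde T$ from below. Since $a_p\le C'' M^{\bar\tau,\sigma'}_p$ for a suitable $\bar\tau>\tau'$ (the factor $C^{p^{\sigma'}}$ is absorbed into a larger $\bar\tau$ because $C^{p^{\sigma'}}\le p^{(\bar\tau-\tau')p^{\sigma'}}$ for large $p$, while $1/p!\le1$), the associated function satisfies $\widetilde T(y)\ge T_{\bar\tau,\sigma'}(y)-\ln C''$, and Proposition \ref{TeoremaAsocirana} then gives $\widetilde T(y)\ge B\,g_{\sigma'}(y)-\mathrm{const}$ for large $y$. Inserting $y=1/x$ and recalling $f_{\rho,\sigma}(x)=e^{-\rho g_{\sigma}(1/x)}$ forces $\rho\,g_{\sigma}(1/x)\ge B\,g_{\sigma'}(1/x)-\mathrm{const}$ as $x\to0^{+}$. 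This is impossible: since $s\mapsto s/(s-1)$ is strictly decreasing, $\sigma'<\sigma$ gives $\sigma'/(\sigma'-1)>\sigma/(\sigma-1)$, and comparing the leading factors $\ln^{\sigma/(\sigma-1)}(1+t)$ (using \eqref{PosledicaLambert1.5} for the $W$-terms) one obtains $g_{\sigma}\prec g_{\sigma'}$, i.e. $g_{\sigma'}(1/x)/g_{\sigma}(1/x)\to\infty$, contradicting the last inequality. Hence $f_{\rho,\sigma}\notin\E_{\sigma'}(\mathbb R)$ for every $\sigma'\in(1,\sigma)$.

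The routine parts are the Cauchy estimate (copied from Proposition \ref{PrimerExtGevrey}) and the verification $g_{\sigma}\prec g_{\sigma'}$. I expect the main obstacle to lie in the sharpness part, specifically in making precise that the perturbed sequence $a_p=C^{p^{\sigma'}}M^{\tau',\sigma'}_p/p!$ has associated function comparable to $g_{\sigma'}$ so that Proposition \ref{TeoremaAsocirana} is applicable: one must show that the combinatorial factors $1/p!$ and $C^{p^{\sigma'}}$ are of lower order than $M^{\tau',\sigma'}_p=p^{\tau'p^{\sigma'}}$ in the exponent, and it is exactly here that the hypothesis $\sigma'>1$ enters, through $p^{\sigma'}\ln p\gg p\ln p$.
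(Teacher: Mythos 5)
Your proposal is correct, and its skeleton matches the paper's: reduce general $\rho$ to the distinguished value $\rho_{\sigma}$ via the identity $f_{\rho,\sigma}=(f_{\rho_{\sigma},\sigma})^{\rho/\rho_{\sigma}}$, then for sharpness argue by contradiction using flatness at the origin, Taylor's formula, and the derivative bounds \eqref{DerivativeEstimates}. The differences are in execution. For the inclusion, the paper does not rerun the Cauchy estimates: it invokes the superposition property (Proposition \ref{OsobineKlasa}, part $v)$) with the map $z\mapsto z^{\rho/\rho_{\sigma}}$, analytic on $S_2$, applied to $f_{\rho_{\sigma},\sigma}\in\mathcal{E}_{\sigma}(\mathbb R)$ from Proposition \ref{PrimerExtGevrey}; your route of raising the sector bounds \eqref{eq.Bounds_e_h_subsectors_real_axis} to the power $\rho/\rho_{\sigma}$ and repeating the Cauchy argument is equally valid, and it has the side benefit of yielding flatness at $0$ directly from the estimate $f_{\tilde\rho'',\sigma}(x)/x^{j}\to 0$, more cleanly than the Appendix computation behind Lemma \ref{LemmaFlat}. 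For the non-inclusion, the paper uses the integral form of the Taylor remainder and then condition \eqref{M4} to absorb $C^{p^{\sigma'}}M^{\tau_0,\sigma'}_{p}/p!$ into $C'M^{\tau,\sigma}_{p}$ for \emph{arbitrary} $\tau>0$; this gives $f_{\rho,\sigma}(t)\leq C' h_{\tau,\sigma}(t)$ for every $\tau$, which contradicts the lower bound $A\,h_{\tau_1,\sigma}\leq f_{\rho,\sigma}$ in \eqref{ocenah}, since by Proposition \ref{TeoremaAsocirana} shrinking $\tau$ strictly increases the decay constant multiplying $g_{\sigma}$. You instead stay at the level $\sigma'$: you bound the associated function of $a_p=C^{p^{\sigma'}}M^{\tau',\sigma'}_{p}/p!$ below by $T_{\bar\tau,\sigma'}$, apply Proposition \ref{TeoremaAsocirana} to get $f_{\rho,\sigma}(x)\leq C e^{-B g_{\sigma'}(1/x)}$, and conclude via the elementary divergence $g_{\sigma'}(t)/g_{\sigma}(t)\to\infty$. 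Both finishes are sound; the paper's is shorter because \eqref{M4} and \eqref{ocenah} are ready-made, while yours makes visible exactly where $\sigma'<\sigma$ is used (through $g_{\sigma}\prec g_{\sigma'}$) rather than encapsulating it inside \eqref{M4}.
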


\begin{proof}
Take an arbitrary $\rho>0$. It is clear that Lemma~\ref{LemmaFlat} also holds for
\[
f_{\rho,\sigma} = (f_{\rho_{\sigma},\sigma})^{\rho/\rho_{\sigma}}.
\]
Moreover, since the function $z^{\alpha}$, $\alpha>0$, is analytic on $S_2=\mathbb{C}\setminus(-\infty,0]$,  Proposition~\ref{OsobineKlasa} $e)$ implies that $\displaystyle f_{\rho,\sigma}\in \mathcal{E}_{\sigma}(\mathbb{R})$, and therefore Proposition~\ref{PrimerExtGevrey} holds as well.

Let us now show that
\[
f_{\rho,\sigma}\notin \bigcup_{1<\sigma'<\sigma}\mathcal{E}_{\sigma'}.
\]
Assume the opposite, that $f_{\rho,\sigma}\in \mathcal{E}_{\sigma'}$ for some $\sigma'<\sigma$. Then, for a small $t>0$, using \eqref{M4} we obtain
\[
f_{\rho,\sigma}(x)
   = \int_{0}^{x} \frac{(x-s)^{p-1}}{(p-1)!} f^{(p)}_{\rho,\sigma}(s)\, ds
   \leq \frac{x^{p}}{p!}\, C^{p^{\sigma'}+1} M^{\tau_0,\sigma'}_{p}
   \leq C'\, x^{p}\, M^{\tau,\sigma}_{p},\qquad p\in\mathbb{N},
\]
for arbitrary $\tau>0$ and some constant $C'>0$ (depending on $\tau$).
This contradicts the left-hand side of \eqref{ocenah}.
\end{proof}

%\begin{rem}
%\label{Napomena}
%Let $\sigma>1$. Note that the choice of $\rho_{\sigma}=e^{-1/\sigma}$ in  \eqref{PrimerExtended} is not crucial. In particular, 
\begin{rem}
Note that Proposition~\ref{OsobineKlasa} $a)$ and Proposition~\ref{proizvoljnorho} imply that, for arbitrary $\rho>0$ and $\sigma>1$, holds $\dss f_{\rho,\sigma} \notin \bigcup_{t>1} \mathcal{G}_{t}$, where $\mathcal{G}_{t}$ denotes the Gevrey classes.
\end{rem}

%\begin{align}
%\label{fTilda}
%\widetilde{f}_{\rho,\sigma} (t) :=
%\begin{cases}
%f_{\rho,\sigma} (t) \quad &, t>0 \\
%0 \quad &, t \leq 0,
%\end{cases}
%\end{align} 

%In the proof of the Proposition \ref{PrimerExtGevrey}, we could also use a different arguments. In particular, the function $\displaystyle z e_{\tau,\sigma}(1/z)$, where $e_{\tau,\sigma}$ is defined in \eqref{OptimalFlat} is the optimal flat function in $S_2$ for $M^{\tau,\sigma}_p$, as it is noted in \cite[Remark 4.6.]{Javier03} (see also \cite{Javier02}).
%The derivatives of such functions vanish at the origin, and their ultraholomorphic extensions, when restricted to real domain, possess the desired regularity. However, here we presented a direct proof based on properties of the Lambert $W$ function.
%Functions of the form \( h_{\tau,\sigma}(t) \) have been recently used in the construction of optimal flat functions in ultradifferentiable classes. The derivatives of such functions vanish at the origin, and they can be ultraholomorphically extended to suitable sectors in complex plane. For more details we refer to \cite{Javier02,Javier03} for more details.

 %&\leq C_1 \, j! \, 2^j \, A_h \left(\frac{1}{h}\right)^{j^{\sigma}} j^{\tau j^{\sigma}}, \quad j\in \mathbb{N}_0 ,

%\end{rem}

\section{Wavelet construction}
\label{section:waveletconstruction}
In this section, we denote by $m_{0}(\xi)$ a low-pass filter of an orthonormal wavelet satisfying the following minimal requirements:
\begin{align}
\label{CozM0}
\begin{split}
    &\textit{i)} \
    m_0 \text{ is a continuous, even, $2\pi-$ periodic function on } \mathbb R, m_0 (0)=1,
    \\
    &\textit{ii)} 
    \inf_{\xi\in[-\pi/3,\pi/3 ]} |m_0(\xi)|\not=0,
    \\
    &\textit{iii)} \
    |m_0(\xi)|^2+|m_0(\xi+\pi)|^2=1.
\end{split}
\end{align}
%\begin{center}
% $m_0$ is a continuous, even, $2\pi-$ periodic function on $\mathbb R$,
% \begin{equation}
% \label{CozM0}
%\inf_{\xi\in[-\pi/2,\pi/2 ]} |m_0(\xi)|\not=0, 
% \end{equation}
%\begin{equation*}
%\label{KvadratiM0}
%\displaystyle |m_0(\xi)|^2+|m_0(\xi+\pi)|^2=1.
%\end{equation*}
%\end{center}

%Let $\varphi$ be the scaling function associated with MRA. It is well known that if $m_0$ satisfies conditions $i)$, $ii)$ and $iii)$ in \eqref{CozM0}, then 

%\begin{equation}
%\label{OdnosM0iPhi}
%\displaystyle \hat\varphi (\xi)=\prod_{j=1}^{\infty}m_0(2^{-j}\xi),\quad \xi\in\mathbb R,
%\end{equation} 
%is a continuous function. 

%Moreover, if $m_0$ is smooth then $\hat\varphi$ is smooth. On the other hand, if $\hat\varphi$ has a polynomial decay and $\hat\varphi(0)=1$, then $m_0(\xi)$ is a smooth function and \eqref{OdnosM0iPhi} holds (see \cite{HW}). It turns out that we can further control the decay rate of $\hat\varphi$ (and hence the regularity of the wavelet given in \eqref{MRAWavelet}) by controlling the behavior of $m_0(\xi)$ near the point $\displaystyle \xi_0=\frac{2\pi}{3}$. For that we need the following definition.

%where ${\rm coz}\, m_0$  is the cozero set (complement of a zero set) of $m_0$. Note that \eqref{CozM0} and \eqref{KvadratiM0} implies that $|m_0(\xi)|=1$ on $\displaystyle [0,\frac{\pi}{5}]$. 

%with \( \operatorname{supp}\widehat{\varphi} \subset [-\tfrac{4\pi}{3}, \tfrac{4\pi}{3}] \) (see \cite{HW}).

%For more details concerning invariant cycles and their applications to wavelet construction, we refer to \cite{HW1}. 

 Recall that ${\rm coz}\, m_0$  denotes the cozero set (complement of a zero set) of $m_0$. Let us start with the following results, see \cite[Lemmas 2.3 and 3.1]{Fukuda} and \cite[Theorem 1]{Fukuda2}.

\begin{lema}
\label{LemmaFukuda}
Let $r>0$, $n\in \mathbb N$, and let $m_0$ be a low-pass filter that satisfies conditions $i)$, $ii)$ and $iii)$ in \eqref{CozM0} such that $\operatorname{coz} m_0 \cap[0, \pi] \subset\left[0,\left(\frac{2}{3}+r\right) \pi\right)$. If $\varphi$ is a scaling function associated with MRA, then for  $r \leq \frac{2}{15}$ it holds
$$
\operatorname{coz} \hat{\varphi} \cap\left[2^n \pi, 2^{n+1} \pi\right] \subset\left(\frac{2^{n+2}}{3} \pi-\left(3+(-1)^n\right) r \pi, \frac{2^{n+2}}{3} \pi+\left(3-(-1)^n\right) r \pi\right) .
$$
In particular, if $r = \frac{2}{15}$ then $\operatorname{coz} m_0 \cap[0, \pi]\subset \left[0, \frac{4}{5} \pi\right)$ and
\begin{equation}
\label{NejednakostFukuda1}
\displaystyle \left|\xi-\frac{2^{n+2}}{3} \pi\right|<\frac{8}{15} \pi, \quad \xi \in \operatorname{coz} \hat{\varphi}\, \cap \left[2^n \pi, 2^{n+1} \pi\right].
\end{equation} In addition, for $\xi \in \operatorname{coz} \hat{\varphi}\, \cap \left[2^n \pi, 2^{n+1} \pi\right]$ following inequality holds
\begin{multline}
\label{NejednakostFukuda2}
\inf _{\xi \in A_n}\left|m_0(\xi)\right|\left|\prod_{j=1}^{n+1} m_0\left(\frac{2}{3} \pi-(-1)^{n-j} \frac{\xi-\frac{2^{n+2}}{3} \pi}{2^j}\right)\right| \leq|\hat{\varphi}(\xi)|\\
\leq\left|\prod_{j=1}^{n+1} m_0\left(\frac{2}{3} \pi-(-1)^{n-j} \frac{\xi-\frac{2^{n+2}}{3} \pi}{2^j}\right)\right|,
\end{multline}
where $\displaystyle A_n=\left(\frac{\pi}{3}-\frac{3+(-1)^n}{2^{n+2}} \pi, \frac{\pi}{3}+\frac{3-(-1)^n}{2^{n+2}} \pi\right)$, $n\in \mathbb N$.

If $\displaystyle\frac{2\pi}{3}\not\in {\rm int}({\rm supp}\,\, m_0)$ then $\varphi$ is band-limited.

\end{lema}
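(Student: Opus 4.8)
The plan is to extract everything from the infinite product \eqref{OdnosM0iPhi}. Since $\hat\varphi(\xi)=\prod_{j\ge1}m_0(2^{-j}\xi)$ with each factor continuous, one has $\xi\in\operatorname{coz}\hat\varphi$ if and only if $2^{-j}\xi\in\operatorname{coz} m_0$ for every $j\ge1$. By the hypothesis $\operatorname{coz} m_0\cap[0,\pi]\subset[0,(\tfrac23+r)\pi)$ together with evenness and $2\pi$-periodicity, this is equivalent to requiring that each $2^{-j}\xi$, reduced modulo $2\pi$, lie within distance $(\tfrac23+r)\pi$ of an even multiple of $\pi$, i.e. avoid the forbidden band around $\pi$. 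First I would turn these countably many membership conditions into explicit interval constraints on $\xi$ and intersect them.

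Next I would prove the first support inclusion by induction on $n$. The halving identity $\hat\varphi(\xi)=m_0(\xi/2)\hat\varphi(\xi/2)$ shows that $\xi\in\operatorname{coz}\hat\varphi\cap[2^n\pi,2^{n+1}\pi]$ forces $\xi/2\in\operatorname{coz}\hat\varphi\cap[2^{n-1}\pi,2^n\pi]$, so the inductive hypothesis already confines $\xi$ to a dyadic dilate of the interval at level $n-1$. That dilate is too wide, and the sharpening comes from imposing directly that the two dyadic scales nearest the boundary $\tfrac23\pi$ remain in $\operatorname{coz} m_0$: one scale forcing $2^{-j}\xi$ to stay in the support lobe just below an even multiple of $2\pi$ (this pins the lower endpoint), a coarser scale forcing $2^{-j}\xi$ to remain in the central lobe about $0$ (this pins the upper endpoint). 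Collecting them yields exactly the asymmetric interval with half-widths $(3\pm(-1)^n)r\pi$, the alternation in $(-1)^n$ recording whether $\tfrac{2^{n+2}}{3}\pi$ sits just above or below the relevant multiple of $2\pi$. Specializing to $r=\tfrac2{15}$ gives $\tfrac23+r=\tfrac45$ and the uniform bound $\tfrac8{15}\pi$ of \eqref{NejednakostFukuda1}. The main obstacle I expect is precisely this interval bookkeeping: one must track, uniformly in $n$ and across all dyadic scales, which even multiple of $\pi$ each $2^{-j}\xi$ approaches and verify that the accumulated constraints close up to exactly the stated interval rather than a larger one. It is the threshold $r\le\tfrac2{15}$ that keeps consecutive support intervals consistent under doubling and prevents them from leaking into the forbidden band near $\pi$; pinning down this constant is the delicate part.

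For the two-sided estimate \eqref{NejednakostFukuda2} I would split the product at $j=n+1$, writing $\hat\varphi(\xi)=\big(\prod_{j=1}^{n+1}m_0(2^{-j}\xi)\big)\,\hat\varphi(2^{-(n+1)}\xi)$, and set $\xi=\tfrac{2^{n+2}}{3}\pi+\delta$ with $|\delta|<\tfrac8{15}\pi$. A routine reduction modulo $2\pi$, using that $2^{m}\pi/3$ reduces to $\pm\tfrac23\pi$ with the sign alternating in $m$, gives $2^{-j}\xi\equiv\mp\tfrac23\pi+2^{-j}\delta$, so evenness of $m_0$ rewrites the $j$-th factor as $m_0\!\big(\tfrac23\pi-(-1)^{n-j}\tfrac{\delta}{2^j}\big)$, which is the product displayed in \eqref{NejednakostFukuda2}. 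The tail $\hat\varphi(2^{-(n+1)}\xi)$ has argument $\tfrac23\pi+2^{-(n+1)}\delta$, hence its leading factor is $m_0$ evaluated at a point of the interval $A_n$ about $\tfrac\pi3$, while all deeper factors have arguments in $[-\tfrac\pi3,\tfrac\pi3]$, where hypothesis ii) of \eqref{CozM0} keeps $|m_0|$ bounded below. Using $|m_0|\le1$ (from iii) in \eqref{CozM0}) for the upper bound and $\inf_{A_n}|m_0|$ for the lower bound then yields \eqref{NejednakostFukuda2}.

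Finally, the band-limited conclusion follows from the same factorization: along $[2^n\pi,2^{n+1}\pi]$ the critical arguments $2^{-(n+1)}\xi=\tfrac23\pi+2^{-(n+1)}\delta$ and $2^{-n}\xi\equiv-\tfrac23\pi+2^{-n}\delta$ lie in neighborhoods of $\tfrac23\pi$ and $-\tfrac23\pi$ whose radii tend to $0$ as $n\to\infty$. If $\tfrac{2\pi}{3}\notin\operatorname{int}(\operatorname{supp} m_0)$, then $m_0$ vanishes near $\tfrac23\pi$, and by evenness near $-\tfrac23\pi$ on the matching side; since the offsets at the two consecutive scales $j=n,n+1$ both carry the sign of $\delta$, for every $\xi$ in the band one of these two factors vanishes once $n$ is large. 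Hence $\hat\varphi\equiv0$ beyond some $2^N\pi$, so $\hat\varphi$ has compact support and $\varphi$ is band-limited.
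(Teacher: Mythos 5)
This lemma is not proved in the paper at all: it is imported verbatim from the cited works \cite{Fukuda, Fukuda2}, so your attempt has to be judged on its own merits rather than against an in-paper argument. Your handling of the support inclusion is sound in outline: the induction through the halving identity $\hat\varphi(\xi)=m_0(\xi/2)\,\hat\varphi(\xi/2)$, the per-scale constraints coming from the coz-bands of $m_0$ around even multiples of $\pi$, the alternation with the parity of $n$, and the role of $r\le\tfrac{2}{15}$ in preventing the dilated interval from reaching the coz-band around the next even multiple of $2\pi$ are exactly the right bookkeeping; likewise your reduction of $m_0(2^{-j}\xi)$ modulo $2\pi$ and evenness to the factors $m_0\bigl(\tfrac{2\pi}{3}-(-1)^{n-j}\tfrac{\xi-\frac{2^{n+2}}{3}\pi}{2^j}\bigr)$ is correct. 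The genuine gap is in the lower bound of \eqref{NejednakostFukuda2}.

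You bound the tail $\hat\varphi(2^{-(n+1)}\xi)$ by saying its leading factor lies in $A_n$ while ``all deeper factors have arguments in $[-\tfrac{\pi}{3},\tfrac{\pi}{3}]$, where hypothesis ii) of \eqref{CozM0} keeps $|m_0|$ bounded below.'' This cannot work: there are \emph{infinitely many} deeper factors, and a per-factor lower bound $c\in(0,1)$ gives $\prod_{i\ge 2}c=0$. Worse, the inequality you must prove, $|\hat\varphi(2^{-(n+1)}\xi)|\ge \inf_{A_n}|m_0|$, forces the tail product $\prod_{i\ge 2}\bigl|m_0\bigl(2^{-i-(n+1)}\xi\bigr)\bigr|$ to be $\ge 1$, hence (each factor being $\le 1$ by iii)) identically equal to $1$. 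What is actually needed is that $|m_0|\equiv 1$ at all the deeper arguments, and this comes from iii) \emph{together with the support hypothesis}, not from ii): $\operatorname{coz} m_0\cap[0,\pi]\subset[0,\tfrac{4}{5}\pi)$ forces $m_0(\xi+\pi)=0$, hence $|m_0(\xi)|=1$, for $|\xi|\le\tfrac{\pi}{5}$ (this is Remark \ref{Jednako1} of the paper); and the deeper arguments $2^{-i}\bigl(\tfrac{2\pi}{3}+2^{-(n+1)}\delta\bigr)$, $i\ge 2$, satisfy the numerically tight bound $\tfrac{2\pi}{3\cdot 2^{i}}+2^{-(n+1+i)}|\delta|<\tfrac{\pi}{6}+\tfrac{\pi}{30}=\tfrac{\pi}{5}$ using $|\delta|<\tfrac{8}{15}\pi$ and $n\ge 1$. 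The same missing fact also underlies your opening ``if and only if'': without knowing the factors are eventually exactly $1$, nonvanishing of every factor does not imply nonvanishing of the infinite product. Two further inaccuracies in the band-limited part: the offsets at scales $j=n$ and $j=n+1$ are $-\delta/2^{n}$ and $+\delta/2^{n+1}$, i.e.\ they carry \emph{opposite} signs, not ``both the sign of $\delta$'' as you write (opposite signs are precisely what lets one-sided vanishing of $m_0$ at $\tfrac{2\pi}{3}$ kill one of the two factors); and $\tfrac{2\pi}{3}\notin\operatorname{int}(\operatorname{supp} m_0)$ does not literally yield vanishing of $m_0$ on a neighborhood of $\tfrac{2\pi}{3}$, only zeros of $m_0$ accumulating at it, though the paper's own Remark \ref{Jednako1} glosses the hypothesis the same way you do.
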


\begin{rem}
\label{Jednako1}
Note that the condition
\[
\frac{2\pi}{3} \notin \operatorname{int}(\operatorname{supp} m_{0})
= \operatorname{int}\big(\overline{\operatorname{coz} m_{0}}\big),
\]
implies that \(m_{0}\) vanishes in a neighborhood of \(\frac{2\pi}{3}\).
In our construction, we prescribe the local decay of \(m_{0}\) near \(\frac{2\pi}{3}\), so that
\[
\frac{2\pi}{3} \notin \operatorname{coz} m_{0},
\qquad \text{but} \qquad
\frac{2\pi}{3} \in \operatorname{int}(\operatorname{supp} m_{0}).
\]

%in our construction, however, we control its local decay, and obtain a non–band-limited scaling function $\varphi$.

If $\operatorname{coz} m_{0} \cap [0, \pi] \subset \left[0, \frac{4}{5} \pi\right)$, note that property~$iii)$ in \eqref{CozM0} implies
\[
|m_{0}(\xi)| = 1 \quad \text{for } \xi \in \left[0, \frac{\pi}{5}\right].
\]
Moreover, $|m_{0}(\xi)| \leq 1$ for all $\xi \in \mathbb{R}$, and since $\displaystyle m_{0}\big(-\frac{2\pi}{3}\big) = 0$, we also have $\displaystyle \Big|m_{0}\left(\frac{\pi}{3}\right)\Big| = 1$. This yields to non-band-limited scaling function $\varphi$ given in \eqref{OdnosM0iPhi}.
\end{rem}

In the following theorem, we control the local behavior of \( m_0(\xi) \) using the functions $f_{\rho,\sigma}$ defined in \eqref{BazicnaFunkcija}, to obtain a global decay estimates for $\hat\varphi$. Moreover, we consider $\displaystyle \sigma_{\eta}=\frac{\sigma+\eta (\sigma -1)}{1+\eta (\sigma-1)}$, $\sigma>1$, $\eta>1$. Note that $\sigma_{\eta}\in (1,\sigma_{0})$ where $\sigma_0 = \min \{2,\sigma\}$.

\begin{te}
\label{TeoremaLowPass}
   Let $m_0$ be a low-pass filter that satisfies conditions $i)$, $ii)$ and $iii)$ in \eqref{CozM0} such that $\operatorname{coz} \, m_0 \cap [0,\pi] \subset [0,\frac{4\pi}{5})$ and $\varphi$ be the associated scaling function. Moreover, let $\rho>0$, $\s>1$, $f_{\rho,\sigma}$ be as in \eqref{BazicnaFunkcija} and ${{\mathbf{\Gamma}}}_{\sigma}$ as in Definition \ref{DefinicijaOpadanje}.

   Then the following is true: if $m_0$ satisfies
   
    \begin{align}
    \label{uslovte}
    \begin{split}
        C_1 f_{\rho_1 , \sigma} \left(  \xi - \frac{2\pi}{3} \right)
        \leq |m_0 (\xi)|
        \leq C_0 f_{\rho_0 , \sigma} \left( \xi - \frac{2\pi}{3} \right),
        \quad \xi\in \left[ \frac{2\pi}{3}-\varepsilon , \frac{2\pi}{3}+\varepsilon \right],
    \end{split}
    \end{align} for some constants $\varepsilon,C_0, C_1,\rho_1,\rho_0>0$
    then 
    $$
    \widehat{\varphi}\in {{\mathbf{\Gamma}}}_{\sigma}\backslash \bigcup_{1<\sigma'<\sigma_{\eta} } {{\mathbf{\Gamma}}}_{\sigma'},$$ for each $\eta>1$. Consequently, $\displaystyle \varphi\in {{\mathcal{E}}}_{\sigma}\backslash \bigcup_{1<\sigma'<\sigma_{\eta} } {{\mathcal{E}}}_{\sigma'} $.
\end{te}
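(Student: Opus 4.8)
The plan is to estimate $\widehat\varphi$ directly from the product \eqref{OdnosM0iPhi} block by block, using Lemma~\ref{LemmaFukuda} to replace $\widehat\varphi$ on a dyadic block $[2^n\pi,2^{n+1}\pi]$ by the finite product in \eqref{NejednakostFukuda2} and then feeding the local two-sided bound \eqref{uslovte} into each factor. Since $m_0$, and hence $\widehat\varphi$, are even, I restrict to $\xi>0$. Writing $y=\xi-\tfrac{2^{n+2}}{3}\pi$, so that $|y|<\tfrac{8\pi}{15}$ by \eqref{NejednakostFukuda1}, the arguments in \eqref{NejednakostFukuda2} are $\zeta_j=\tfrac{2\pi}{3}-(-1)^{n-j}\tfrac{y}{2^j}$ with $|\zeta_j-\tfrac{2\pi}{3}|=|y|/2^j$. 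For indices with $|y|/2^j\le\varepsilon$ the hypothesis \eqref{uslovte} applies and, since $f_{\rho,\sigma}$ is even with $f_{\rho,\sigma}(t)=e^{-\rho g_\sigma(1/t)}$, each such factor is squeezed between $C_1 e^{-\rho_1 g_\sigma(2^j/|y|)}$ and $C_0 e^{-\rho_0 g_\sigma(2^j/|y|)}$; the finitely many factors with $|y|/2^j>\varepsilon$ are harmless (bounded above by $1$ for the upper estimate, and below by a positive constant on the relevant range for the lower estimate). Thus everything reduces to the behaviour of the sum $S_n(y)=\sum_j g_\sigma(2^j/|y|)$.

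The key technical step is to show $S_n(y)\asymp (n+1)\,g_\sigma(\xi)\asymp L\,g_\sigma(\xi)$, where $L=\ln(1+\xi)$ and $n+1\asymp L$ (from $\xi\asymp 2^n$). On one hand the single largest term $g_\sigma(2^{n+1}/|y|)$ is $\asymp g_\sigma(\xi)$ by \eqref{IzlazakKonstanti}, since its argument is comparable to $\xi$; this already gives $S_n(y)\gtrsim g_\sigma(\xi)$. On the other hand, $g_\sigma$ being increasing, every term is at most the last, so $S_n(y)\le (n+1)\,g_\sigma(2^{n+1}/|y|)\lesssim (n+1)g_\sigma(\xi)$. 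Because $g_\sigma(\xi)=\omega_\sigma(L)=L^{\sigma/(\sigma-1)}/W^{1/(\sigma-1)}(L)$ grows faster than linearly in $L$, the sum dominates any geometric prefactor $C_0^{\,n}$ or $C_1^{\,n}$, which is then absorbed by an infinitesimal change of $\rho$.

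For membership, the single-term lower bound $S_n(y)\gtrsim g_\sigma(\xi)$ gives $|\widehat\varphi(\xi)|\le C e^{-\rho_0' g_\sigma(\xi)}$ on every block, hence for all $\xi$ (trivial off $\operatorname{coz}\widehat\varphi$ and on bounded ranges), so $\widehat\varphi\in\mathbf{\Gamma}_{\sigma}$. For non-membership I exhibit a sequence: fix $y_0$ with $0<|y_0|\le\varepsilon$ and set $\xi_n=\tfrac{2^{n+2}}{3}\pi+y_0\in\operatorname{coz}\widehat\varphi\cap[2^n\pi,2^{n+1}\pi]$, for which every $\zeta_j$ satisfies $|y_0|/2^j\le\varepsilon$, so \eqref{uslovte} applies to all factors. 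Using $\inf_{A_n}|m_0|\to 1$ (as $A_n\downarrow\{\pi/3\}$ and $|m_0(\pi/3)|=1$, cf.\ Remark~\ref{Jednako1}) together with $S_n(y_0)\le (n+1)g_\sigma(\xi_n)$, I obtain $|\widehat\varphi(\xi_n)|\ge c\,e^{-\rho_1(n+1)g_\sigma(\xi_n)}$. The exponent $\sigma_\eta$ now enters through the identity $\tfrac{\sigma_\eta}{\sigma_\eta-1}=\tfrac{\sigma}{\sigma-1}+\eta$, so that for $\sigma'<\sigma_\eta$
\[
\frac{g_{\sigma'}(\xi_n)}{(n+1)g_\sigma(\xi_n)}\asymp L^{\frac{\sigma'}{\sigma'-1}-\frac{\sigma}{\sigma-1}-1}\,W^{\frac{1}{\sigma-1}-\frac{1}{\sigma'-1}}(L),
\]
whose $L$-exponent exceeds $\eta-1>0$. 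Hence the slowly varying $W$-factor is dominated and the ratio tends to $\infty$, contradicting any estimate $|\widehat\varphi(\xi_n)|\le C e^{-\rho' g_{\sigma'}(\xi_n)}$; thus $\widehat\varphi\notin\mathbf{\Gamma}_{\sigma'}$. The statement for $\varphi$ then follows from Paley--Wiener: Theorem~\ref{Thm:Peli-Viner}(b) gives $\varphi\in\mathcal E_\sigma$, and Remark~\ref{RemarkPeli} converts $\widehat\varphi\notin\mathbf{\Gamma}_{\sigma'}$ into $\varphi\notin\mathcal E_{\sigma'}$.

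I expect the main obstacle to be the sum estimate $S_n(y)\asymp (n+1)g_\sigma(\xi)$ together with the comparison in the last display: one must keep the power of $L=\ln(1+\xi)$ as the leading quantity and verify that the role of $\eta>1$ is precisely to secure a strictly positive $L$-exponent $\eta-1$, so that the borderline logarithmic ($W$) factors never interfere. The gap of size $n+1\asymp L$ between the single-term lower bound and the full-sum upper bound is exactly what produces the critical window $\sigma'<\sup_{\eta>1}\sigma_\eta=2-\tfrac1\sigma$. Careful bookkeeping of the prefactors $C_0^{\,n},C_1^{\,n}$ and of the endpoint factor $\inf_{A_n}|m_0|$, and the verification that the chosen $\xi_n$ genuinely lie in $\operatorname{coz}\widehat\varphi$, are the remaining routine points.
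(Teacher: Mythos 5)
Your proposal is correct, and it follows the paper's skeleton: Lemma \ref{LemmaFukuda} combined with \eqref{uslovte} applied factor by factor on dyadic blocks, test points at fixed distance from $\frac{2^{n+2}}{3}\pi$, absorption of the geometric prefactors $C_0^{n},C_1^{n}$ into the exponential decay (Lemma \ref{TehnickaLema}), and Theorem \ref{Thm:Peli-Viner} with Remark \ref{RemarkPeli} at the end. The membership half is essentially identical to the paper's. Where you genuinely diverge is the decisive estimate for non-membership. The paper writes $g_\sigma(2^j/\varepsilon)=\omega_\sigma(a_j)=a_je^{\frac{1}{\sigma-1}W(a_j)}$, pulls out $a_{n+1}\exp\left\{\frac{1}{\sigma_\eta-1}W(a_{n+1})\right\}=\omega_{\sigma_\eta}(a_{n+1})$, and is left with the series $C_\eta=\sum_j e^{-\eta W(a_j)}=\sum_j\left(W(a_j)/a_j\right)^\eta$, whose convergence is exactly where $\eta>1$ is needed; this yields $\sum_{j=1}^{n+1}g_\sigma(2^j/\varepsilon)\le C_\eta\, g_{\sigma_\eta}(\xi_n)$ (see \eqref{DonjaOcena2}), followed by the comparison $g_{\sigma_\eta}\le\rho\, g_{\sigma'}$ for $\sigma'<\sigma_\eta$ (see \eqref{PrethodnaNejedn3}). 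You instead use the crude bound (number of terms) $\times$ (largest term), $S_n\le (n+1)\,g_\sigma(2^{n+1}/|y_0|)\asymp \ln(1+\xi_n)\,g_\sigma(\xi_n)$, and compare this directly with $g_{\sigma'}$ through the identity $\frac{\sigma_\eta}{\sigma_\eta-1}=\frac{\sigma}{\sigma-1}+\eta$, so that $\eta>1$ enters only as positivity of the $L$-exponent $\eta-1$. Your intermediate bound is in fact asymptotically sharp (the terms $\omega_\sigma(a_j)$ grow like a power of $j$, so the sum really is comparable to $(n+1)$ times its top term, whereas the paper's bound by $C_\eta\, g_{\sigma_\eta}$ is lossier), it avoids the convergent-series device entirely, and it delivers the whole window $\sigma'\in\left(1,2-\frac{1}{\sigma}\right)=\bigcup_{\eta>1}(1,\sigma_\eta)$ in one stroke, making transparent that $\eta$ is merely a parameter exhausting that interval; the paper's route buys an estimate that plugs directly into its ready-made comparison of the functions $g_{\sigma_\eta}$ and $g_{\sigma'}$.

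One point to repair in the write-up: the blanket claim $S_n(y)\asymp(n+1)g_\sigma(\xi)$ is not true uniformly over a dyadic block, since $|y|=\left|\xi-\frac{2^{n+2}}{3}\pi\right|$ can be arbitrarily small there, in which case $g_\sigma(2^{n+1}/|y|)$ is far larger than $g_\sigma(\xi)$ and the upper half of the equivalence fails. This does not damage your argument, because you use only the lower half $S_n(y)\gtrsim g_\sigma(\xi)$ (valid for all admissible $y$ by monotonicity of $g_\sigma$ and \eqref{IzlazakKonstanti}) for membership, and the upper half only at the test points where $|y|=|y_0|$ is a fixed positive constant; but the two halves should be stated separately, each with its range of validity, rather than as a single two-sided equivalence.
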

\begin{proof}
      Without loss of generality, we may choose $\varepsilon<\frac{8\pi}{15}$ so that \eqref{uslovte} holds.  Moreover, let $n_0\in\mathbb{N}$ be such that $\displaystyle \frac{8\pi}{15 \cdot 2^{n_0}}< \varepsilon$. Then for all $\xi\in \operatorname{coz} \, \widehat{\varphi} \cap [2^n \pi , 2^{n+1}\pi]$ and $n\geq n_0$, the right-hand side of \eqref{NejednakostFukuda2} implies
    \begin{align}
    \label{ocenaFurijePhi}
        |\widehat{\varphi} (\xi)|
        &\leq \left| \prod_{j=1}^{n+1} m_0 \left( \frac{2\pi}{3} - (-1)^{n-j} \frac{\xi - \frac{2^{n+2}}{3}\pi}{2^j} \right) \right|\nonumber \\
        &\leq \left| \prod_{j=n_0}^{n+1} m_0 \left( \frac{2\pi}{3} - (-1)^{n-j} \frac{\xi - \frac{2^{n+2}}{3}\pi}{2^j} \right) \right|,
    \end{align}
     where in the second inequality we used the fact that $|m_0 (\xi)|\leq 1 ,$ $ \xi\in\mathbb{R}$, see Remark \ref{Jednako1}. 
     
     Further note that for all $n_0\leq j\leq n+1$ and $\xi\in \operatorname{coz} \, \widehat{\varphi} \cap [2^n \pi , 2^{n+1}\pi]$, \eqref{NejednakostFukuda1} implies
    \begin{equation}
    \label{NejednakostTacka}
    \left|  (-1)^{n-j+1} \frac{\xi - \frac{2^{n+2}}{3}\pi}{2^j} \right| 
    = \frac{|\xi-\frac{2^{n+2}}{3}\pi |}{2^j}
    \leq \frac{\frac{8\pi}{15\cdot 2^{n_0}}}{2^{j-n_0} }
    \leq\frac{\varepsilon}{2^{j-n_0}},\quad n_0\leq j\leq n+1.
    \end{equation}  
    Now from (\ref{uslovte}) we have
    \begin{align}
    \label{OcenaFurijePhi2}
        \left|m_0 \left( \frac{2\pi}{3} - (-1)^{n-j} \frac{\xi - \frac{2^{n+2}}{3}\pi}{2^j} \right) \right|
        \leq C_0 f_{\rho_0 , \sigma} \left( (-1)^{n-j+1} \frac{\xi - \frac{2^{n+2}}{3}\pi}{2^j} \right)\nonumber\\
        =C_0 f_{\rho_0 , \sigma} \left(  \frac{|\xi-\frac{2^{n+2}}{3}\pi |}{2^j}\right)\leq C_0 f_{\rho_0 , \sigma} \left( \frac{\varepsilon}{2^{j-n_0}} \right),\quad n_0\leq j\leq n+1,
    \end{align} 
    for $\xi\in \operatorname{coz} \, \widehat{\varphi} \cap [2^n \pi , 2^{n+1}\pi]$, where for the last inequality we used \eqref{NejednakostTacka} and the fact that $f_{\rho,\sigma}$ is even, and increasing on $\mathbb R_+$ (see Lemma \ref{spanskalema} $a)$). 
   
    Moreover, since $\displaystyle f_{\rho_0 , \sigma} \left( \frac{\varepsilon}{2^{j-n_0}} \right)=\exp\Big\{-\rho_0 g_{\sigma}\left( \frac{2^{j-n_0}}{\varepsilon} \right)\Big\}$,  using again Lemma \ref{spanskalema} $a)$, for  $\xi\in \operatorname{coz} \, \widehat{\varphi} \cap [2^n \pi , 2^{n+1}\pi]$ we obtain
    \begin{align}
    \label{ocenaFurijePhi3}
        \sum_{j=1}^{n+1} g_{\sigma} \left( \frac{2^{j-n_0}}{\varepsilon} \right) 
        &\geq g_{\sigma} \left( \frac{2^{n+1-n_0}}{\varepsilon} \right)\geq g_{\sigma}\left( \frac{1}{2^{n_0}\varepsilon \pi} \xi\right)\geq C_1 g_{\sigma}  \left( \xi \right),
    \end{align} where the last inequality follows from \eqref{IzlazakKonstanti}. 
    
    Finally using \eqref{ocenaFurijePhi}, \eqref{OcenaFurijePhi2}, \eqref{ocenaFurijePhi3}, and noting that  $2^n \pi \leq |\xi| \leq 2^{n+1} \pi $ is equivalent to  $n + \log_2 \pi \leq \log_2 |\xi| \leq n+1 + \log_2 \pi$, we obtain
       % where in the last inequality we used $2^n \pi \leq |\xi| \leq 2^{n+1} \pi $, or equivalently $n + \log_2 \pi \leq \log_2 |\xi| \leq n+1 + \log_2 \pi$. Therefore, we have
    \begin{align*}
        |\widehat{\varphi} (\xi)|
        &\leq  \prod_{j=n_0}^{n+1} C_0 f_{\rho_0 , \sigma} \left( \frac{\varepsilon}{2^{j-n_0}} \right)
        = C \prod_{j=1}^{n+1} C_0 f_{\rho_0 , \sigma} \left( \frac{\varepsilon}{2^{j-n_0}} \right)\\
        &= C C_0^{n+1} \exp \left\{ -\rho_0 \sum_{j=1}^{n+1}  g_{\sigma} \left( \frac{2^{j-n_0}}{\varepsilon} \right) \right\}\\
        &\leq C C_0^{n+1} \exp \left\{ - C_1\rho_0  g_{\sigma} \left( \xi \right) \right\} \\
        &\leq C C_0^{\log_2 |\xi| + 1 - \log_2 \pi }   \exp \left\{ - \rho'_0  g_{\sigma} \left( \xi \right) \right\} \\
        &\leq C' |\xi|^{\nu_0} \exp \left\{ - \rho'_0  g_{\sigma} \left( \xi \right) \right\},
    \end{align*} 
    where $\rho'_0=C_1 \rho_0$, $\displaystyle C^{-1}=\prod_{j=1}^{n_0-1} C_0 f_{\rho_0 , \sigma} \left( \frac{\varepsilon}{2^{j-n_0}} \right)$, $\displaystyle C'= C C_0^{1 - \log_2 \pi }$ and $\displaystyle \nu_0 = \log_2 C_0$. Now Lemma \ref{TehnickaLema} implies that $\hat\varphi\in {\mathbf\Gamma}_{\sigma}$. 

     It remains to show that for any $\eta>1$, $\displaystyle \widehat{\varphi}\notin \bigcup_{1<\sigma'<\sigma_{\eta} } {\mathbf\Gamma}_{\sigma'}$, where $\sigma_{\eta}=\frac{\sigma+\eta (\sigma -1)}{1+\eta (\sigma-1)}$.

It is sufficient to evaluate $\hat\varphi$ at the points 
    \begin{equation}
    \label{TackeXiN}
    \xi_n := \frac{2^{n+2}}{3}\pi - (-1)^n \varepsilon \in \operatorname{coz} \, \widehat{\varphi} \cap [ 2^n \pi , 2^{n+1} \pi],\quad n\in \mathbb N,\quad \varepsilon<\frac{8\pi}{15}.
    \end{equation} In particular, we will prove that for arbitrary $1<\sigma'<\sigma_{\eta}$ and $C,\rho>0$ there exists $n'_0\in \mathbb N$ such that
    \begin{equation}
    \label{TrebaPokazati}
    |\widehat{\varphi} (\xi_n)|
    \geq C\, \exp\{-\rho\, g_{\sigma'}(\xi_n)\},\quad n\geq n'_0,
\end{equation} where $\xi_n$ are given in \eqref{TackeXiN}.

 Since  $m_0 \left( \frac{\pi}{3} \right) = 1$ (see Remark \ref{Jednako1}), there exist $n_1\in \mathbb{N}$ such $ \inf_{\xi\in A_n} |m_0 (\xi)| >0$ for all $n\geq n_1$, where $A_n$ is given in Lemma \ref{LemmaFukuda}.

Set $\tilde{C}=\inf_{\xi\in A_n} |m_0 (\xi)|$, and note that it depends only on $n_1$. From Lemma \ref{LemmaFukuda}, (\ref{uslovte}) and $n+\log_2 \pi \leq |\xi| \leq n+1+\log_2 \pi$ for $n\geq n_1$ we obtain
    \begin{align}
    \label{DonjaOcena1}
        |\widehat{\varphi} (\xi_n)|
        &\geq \inf_{\xi\in A_n} |m_0 (\xi)| \left| \prod_{j=1}^{n+1} m_0 \left( \frac{2\pi}{3} - (-1)^{n-j} \frac{\xi_n - \frac{2^{n+2}}{3} \pi}{2^j} \right) \right|\nonumber\\
        &= \tilde{C} \left| \prod_{j=1}^{n+1} m_0 \left( \frac{2\pi}{3} - \frac{(-1)^{n-j}}{2^j} \left(\frac{2^{n+2}}{3}\pi - (-1)^n \varepsilon - \frac{2^{n+2}}{3} \pi \right)\right) \right|\nonumber\\
        &= \tilde{C} \left| \prod_{j=1}^{n+1} m_0 \left( \frac{2\pi}{3} + (-1)^j \frac{\varepsilon}{2^j} \right) \right|\nonumber\\
        &\geq \tilde{C} C^{n+1}_1 \prod_{j=1}^{n+1}  f_{\rho_1 , \sigma} \left(  (-1)^j \frac{\varepsilon}{2^j}  \right)\nonumber\\
        &\geq \tilde{C_1} |\xi_n|^{\nu_1} \exp \left\{ -\rho_1 \sum_{j=1}^{n+1} g_{\sigma}\left(\frac{2^j}{\varepsilon} \right) \right\}
    \end{align} 
    for $\tilde C_1 = \tilde{C} C_1 ^{-\log_2 \pi}$ and $\nu_1 = \log_2 C_1$.
    
Take arbitrary $\eta>1$ set $\sigma_{\eta}=\frac{\sigma+\eta(\sigma-1)}{1+\eta (\sigma-1)}\in (1,\sigma_0)$. Moreover, if we denote $\displaystyle a_j=\ln \left(1+\frac{2^j}{\varepsilon}\right)$, then $g_{\sigma}\left(\frac{2^j}{\varepsilon} \right)=\omega_{\sigma}(a_j)$, where $\omega_{\sigma}$ is given in \eqref{BazicnaFunkcija}. Hence
using  property $(W2)$ of the Lambert $W$ function, Lemma \ref{spanskalema} $a)$, and that $2^n \pi\leq \xi_n\leq 2^{n+1}\pi$ we obtain
\begin{align}
\label{DonjaOcena2}
&\sum_{j=1}^{n+1} g_{\sigma}\left(\frac{2^j}{\varepsilon} \right) =\sum_{j=1}^{n+1} \omega_{\sigma} \left( a_j \right)=\sum_{j=1}^{n+1} a_j \exp\left\{{\frac{1}{\sigma-1}W(a_j)}\right\}\nonumber \\
&\leq a_{n+1} \exp\left\{\frac{1 +\eta(\sigma-1)}{\sigma-1}W(a_{n+1})\right\} \cdot \sum_{j=1}^{n+1} \exp\{-\eta W(a_j)\}\nonumber \\
&\leq a_{n+1}\exp\left\{\frac{1}{\sigma_{\eta}-1}W(a_{n+1})\right\}\sum_{j=1}^{\infty}\frac{W^{\eta}(a_j)}{a^{\eta}_j}\nonumber\\
&= C_{\eta}\, g_{\sigma_{\eta}} \left( \frac{2^{n+1}}{\varepsilon} \right)\leq C_{\eta}\, g_{\sigma_{\eta}} \left( \frac{2\xi_n}{\pi \varepsilon} \right)\leq C' C_{\eta}\, g_{\sigma_{\eta}} \left( \xi_n \right),
\end{align} 
for suitable $C'>0$ where $\displaystyle C_{\eta}:=\sum_{j=1}^{\infty}\frac{W^{\eta}(a_j)}{a^{\eta}_j}$. Let us show that $C_{\eta}<\infty$.

 We need to prove that the series $ \sum_{j=1}^{\infty}\frac{W^{\eta}(a_j)}{a^{\eta}_j}$ is convergent. Note that 
 $$a_j\sim  j\ln 2-\ln\varepsilon\sim j\ln2,\quad j\to \infty,$$
 and therefore by \eqref{PosledicaLambert1.5} 
 $$\frac{W^{\eta}(a_j)}{a^{\eta}_j} \sim \frac{\ln^{\eta} (j \ln 2)}{j^{\eta} \ln^{\eta} 2 }\sim C \frac{\ln^{\eta}j}{j^{\eta}},\quad j\to \infty,$$
 where $C=1/\ln^{\eta} 2$. Clearly series $\sum\frac{\ln^{\eta}j}{j^{\eta}}$ is convergent for $\eta>1$ and hence $C_{\eta}<\infty$.
 
 %Now we can apply integral test of covergence to obtain
%\begin{align*}
    %\int_{1}^{\infty}\frac{\ln ^{\eta} t}{t^{\eta}} \, dt=(\eta-1)^{-\eta}\int_{0}^{\infty} s^{\eta}e^{-s} \, ds =(\eta-1)^{-\eta} \, \Gamma(\eta+1) <\infty,
%\end{align*} where $\Gamma$ denotes the gamma function.

Now using \eqref{DonjaOcena1} and \eqref{DonjaOcena2} we obtain
\begin{equation}
\label{PrethodnaNejedn}
|\widehat{\varphi} (\xi_n)|\geq\tilde{C_1} |\xi_n|^{\nu_1} \exp\{-\rho'_1\, g_{\sigma_{\eta}}(\xi_n)\} \geq \tilde{C}_2 \exp\{-\rho'_2\, g_{\sigma_{\eta}}(\xi_n)\}, \quad n\geq n_1,
%\geq \tilde{C_1} (2^n \pi)^{\nu_1} \exp(-\rho'_1\, g_{\sigma_{\eta}}(\xi_n)),\,\,n\geq n_1,
\end{equation} for suitable $\tilde{C}_2 , \rho'_2>0$, where we used Lemma \ref{TehnickaLema} and $\xi_n$ are given in \eqref{TackeXiN}. 

%Note that for arbitrary $C>0$ one can find $n_2\in \mathbb N$  such that \eqref{PrethodnaNejedn}
%\begin{align}
%\label{PrethodnaNejedn2}
%    \tilde{C_1} (2^n \pi)^{\nu_1}\geq C ,\quad n\geq n_2.
%\end{align}

Take arbitrary $\rho>0$ and $1<\sigma'<\sigma_{\eta}$. Then we can write 
$$\frac{1}{1-\sigma'} = \frac{1}{1-\s_{\eta}}+c_{\sigma'},$$ for suitable $c_{\sigma'}>0$. Moreover, since 
$$\lim_{\xi\to\infty} e^{-c_{\sigma'} W(\ln (1+|\xi|))} = 0,$$ we can choose $|\xi_0|$ large enough to obtain $e^{-c_{\sigma'} W(\ln (1+|\xi|))}< \rho$ when $|\xi|>|\xi_0|$. Therefore, using the property $(W2)$ we have
\begin{align}
\label{PrethodnaNejedn3}
    g_{\s_{\eta}}(\xi)
    &= \ln (1+|\xi|) \exp\left\{\frac{1}{\s_{\eta}-1} W(\ln (1+|\xi|))\right\}\nonumber\\
    &= \ln (1+|\xi|) \exp\left\{\frac{1}{\sigma'-1} W(\ln (1+|\xi|))\right\}\nonumber\\
    & \, \quad \exp\left\{-c_{\sigma'} W(\ln (1+|\xi|))\right\}\nonumber\\
    &\leq \rho \ln (1+|\xi|) \exp\left\{\frac{1}{\sigma'-1} W(\ln (1+|\xi|))\right\},\nonumber\\
    &=\rho g_{\sigma'}(\xi),\quad |\xi|>|\xi_0|.
    \end{align}

Choose $n_2\in \mathbb N$ such that $|\xi_n| >|\xi_0|$ for $n\geq n_2$, where $\xi_n$ are given in \eqref{TackeXiN}. Now \eqref{TrebaPokazati} follows from \eqref{PrethodnaNejedn}, and \eqref{PrethodnaNejedn3} for $n'_0=\max\{n_1,n_2\}$.

The fact that $\displaystyle \varphi\in {{\mathcal{E}}}_{\sigma}\backslash \bigcup_{1<\sigma'<\sigma_{\eta} } {{\mathcal{E}}}_{\sigma'} $ now follows from Theorem \ref{Thm:Peli-Viner}.
\end{proof}

\begin{rem}
    Although Theorem \ref{TeoremaLowPass} holds for all $\sigma>1$, the result is optimal when $1<\sigma\leq 2$. This is due to the restriction $\eta>1$ in $\sigma_{\eta}=\frac{\sigma+\eta (\sigma -1)}{1+\eta (\sigma-1)}$, that we needed because constant $C_{\eta}$ that appear in \eqref{DonjaOcena2} is not finite for $0<\eta\leq 1$. 

    \iffalse
    Moreover, for $\eta>1$ we have $t:=\eta (\sigma - 1)>\sigma - 1>\sigma - 2$, and therefore
    \begin{align*}
       \frac{\sigma+t}{1+t}
       < \frac{t+2+t}{t+1}
        = 2.
    \end{align*}
    In particular, $\sigma_{\eta}<2$ for all $\sigma,\eta>1$. 
    This restriction will not influence our further construction.
    \fi
\end{rem}

Now we are ready to prove our main result.%, Theorem \ref{MainResult}.

\begin{te}
\label{MainResult}
For given  $\sigma>1$ there exists an orthonormal wavelet $\psi$ such that 
$$\displaystyle \psi\in \E_{\sigma}(\mathbb R)\backslash \bigcup_{1<\sigma'<\s_{\eta}}\E_{\sigma'}(\mathbb R) \quad{\rm and}\quad \displaystyle \hat\psi\in \E_{\sigma}(\mathbb R)\backslash \bigcup_{1<\sigma'<\sigma}\E_{\sigma'}(\mathbb R),$$ for each $\eta>1$ where $\displaystyle \sigma_{\eta}=\frac{\sigma+\eta (\sigma -1)}{1+\eta (\sigma-1)}$.
\end{te} 

\begin{proof}
It is sufficient to construct a low-pass filter 
$m_0 \in \mathcal{E}_{\sigma} \setminus \bigcup_{1 < \sigma' < \sigma} \mathcal{E}_{\sigma'}$ 
that satisfies the assumptions of Theorem~\ref{TeoremaLowPass}. The decay and regularity of $\hat\psi$ then follow from \eqref{MRAWavelet}.
%Then, clearly $\psi$ has the desired regularity. On the other hand, the regularity of the scaling function and the wavelet in the Fourier domain follows from \eqref{MRAWavelet}, \eqref{OdnosM0iPhi}, and the properties of $\mathcal{E}_{\sigma}$ given in Proposition~\ref{OsobineKlasa}.

Let
\begin{align*}
\gamma_{\sigma} (\mu) :=
\begin{cases}
f_{\sigma} (\mu), \quad & \mu>0 \\
0, \quad & \mu \leq 0,
\end{cases}
\end{align*}
where $f_{\sigma}=f_{1,\sigma}$ is given in \eqref{BazicnaFunkcija}. Note Proposition \ref{proizvoljnorho} implies that $\gamma_{\sigma}\in \mathcal{E}_{\sigma} (\mathbb{R})$. 

Let us define 
\begin{align*}
\delta_{\sigma} (\xi) :=
\begin{cases}
\left( \int_0^1 \gamma_{\sigma} (\mu) \gamma_{\sigma} (1-\mu) \, d\mu \right)^{-1} \int_0^{\xi} \gamma_{\sigma} (\mu) \gamma_{\sigma} (1-\mu) \, d\mu,  \quad & \xi>0 \\
0, \quad & \xi \leq 0.
\end{cases}
\end{align*}
It is clear that $\delta_{\sigma} (\xi) = 1$ for all $\xi\geq 1$, and note $\delta_{\sigma}\in \mathcal{E}_{\sigma} (\mathbb{R})$. This follows from the property $\widetilde{(M.2)'}$ of the $M^{\tau,\sigma}_p$, and Proposition \ref{OsobineKlasa} $c)$.  By choosing sufficiently small $\varepsilon>0$ and $0<\rho,\rho'<1$, we have 
\begin{align}
\label{ocenaGamma}
    \gamma_{\sigma} (\mu)
    &\leq f_{\rho , \sigma} (\xi) f_{1-\rho , \sigma} (\mu)\nonumber , \\
    \gamma_{\sigma} (1-\mu)
    &\geq f_{\rho' , \sigma} (\xi) f_{1-\rho' , \sigma} (1-\mu),\quad \mu\leq \xi < \varepsilon.
\end{align} This follows from the fact that $1-\mu \geq 1-\xi \geq \xi$ when $\xi\in[0,\varepsilon)$, and because $f_{\rho,\sigma}$ are increasing (see Lemma \eqref{spanskalema} $a)$). 

Since $\displaystyle \int_{0}^{\varepsilon} f_{\rho,\sigma}(\mu)d\mu<\infty$, for all $\rho>0$ (see Lemma \ref{spanskalema} $c)$), the estimates in \eqref{ocenaGamma} imply that there exist $\rho_1, \rho'_1,C,C'>0$ such that 
\begin{align*}
    C f_{\rho_1 , \sigma} (\xi)
    \leq \delta_{\sigma} (\xi)
    \leq C' f_{\rho'_1 , \sigma} (\xi), \quad \xi \in [0,\varepsilon).
\end{align*} For $\sigma>1$ we define
\begin{align}
\label{theta}
    \theta_{\sigma} (\xi)
    := \left(1-\delta_{\sigma} \left( \frac{5\xi - \pi}{3\pi} \right) \right) \delta_{\sigma} \left( \frac{|\xi - \frac{2\pi}{3}|}{d} \right) , \quad \xi\in \left[ \frac{\pi}{2} , \pi \right), \; d\in \left( 0 , \frac{\pi}{6} \right].
\end{align}
This function can be extended on $[0,2\pi]$ so that $\theta_{\sigma} (\xi) + \theta_{\sigma} (\xi+\pi)=1$, and then extended further on $\mathbb{R}$ so that it is $2\pi$ periodic. 

The desired low-pass filter $m_0$ is then given by
\begin{align}
\label{NasLowPass}
    m_0 (\xi) = \sin \left( \frac{\pi}{2} \theta_{\sigma} (\xi) \right), \quad \xi\in\mathbb{R}.
\end{align} 

Indeed, note that Proposition \ref{OsobineKlasa} $c)$ and  $e)$ applied to $\E_{\sigma}$, and Proposition \ref{proizvoljnorho}  imply that $m_0 \in \mathcal{E}_{\sigma} \setminus \bigcup_{1 < \sigma' < \sigma} \mathcal{E}_{\sigma'}$.

%This means that Fourier transform of our scaling function $\hat\varphi$, and the wavelet $\hat\psi$ given in \eqref{MRAWavelet}, also belong to $\E_{\sigma}(\mathbb R)$.

Let us prove that $m_0$ in \eqref{NasLowPass} satisfies the assumptions of Theorem~\ref{TeoremaLowPass}. For all $\xi\in\mathbb{R}$ it holds
\begin{align*}
    m_0 (\xi+\pi) 
    = \sin \left( \frac{\pi}{2} \theta_{\sigma} (\xi+\pi) \right)
    = \sin \left( \frac{\pi}{2} - \frac{\pi}{2} \theta_{\sigma} (\xi) \right)
    = \cos \left( \frac{\pi}{2} \theta_{\sigma} (\xi) \right),
\end{align*}
and therefore $m_0 ^2 (\xi) + m_0 ^2 (\xi+\pi) = 1$, which is condition $iii)$ in \eqref{CozM0}. Conditions $i)$ and $ii)$ follows directly form the construction.

Note that the functions $m_0 (\xi)$ and $\theta_{\sigma} (\xi)$ have the same decay rate near $\xi = \frac{2\pi}{3}$ , because $\displaystyle \lim_{\xi\to\frac{2\pi}{3}} \frac{m_0 (\xi)}{\theta_{\sigma} (\xi)} = \frac{\pi}{2}$. Moreover, $\theta_{\sigma} (\xi)$ satisfies the inequalities \eqref{uslovte}. This follows since the term $\left(1-\delta_{\sigma} \left( \frac{5\xi - \pi}{3\pi} \right)\right)$ in \eqref{theta} is bounded on $\mathbb R$, and it is strictly positive in the neighborhood of $\xi_0=\frac{2\pi}{3}$. Therefore, $m_0(\xi)$ also satisfies \eqref{uslovte}.
%The fact that $\hat\psi\not \in \displaystyle \bigcup_{t>1}{\mathcal G}_{t}(\mathbb R)$ follows by the construction, since for $m_0 (\xi)$ we use a particular function from $\E_{\sigma}(\mathbb R)$ given in Theorem \ref{PrimerExtGevrey}. On the other hand, $\displaystyle  \psi\not \in \bigcup_{t>1}{\mathcal G}_{t}(\mathbb R)$ since the Theorem \ref{TeoremaLowPass} implies that $\hat\varphi\not \in \mathbf {\Gamma}_{\s_0}$, for some $\s_0\in (1,2)$. By the Paley-Wiener theorem, this means that $\psi$ is not extended Gevrey regular when $1<\sigma<\s_0$ and the conclusion follows from the part $i)$ of the Proposition \ref{OsobineKlasa}.
\end{proof}

For $\sigma>1$ let us define
\begin{equation}
\label{Nx}
N_x^\sigma:=\left\{f \in L^2\left(\mathbb{R}\right)\,\, \Big| \,\,\exp \left\{\rho\,\log^{\frac{\sigma}{\sigma-1}} |\xi|\right\} |\hat{f}(\xi)|  \leq C \text{ for some } \rho>0\right\}.
\end{equation} In \cite[Theorem~4.4]{Fukuda}, the authors also constructed a wavelet
\(\psi \in N_x^2\) by imposing polynomial estimates on \(m_0\) in a neighborhood
of \(\frac{2\pi}{3}\).
This approach allows them to obtain improved regularity of the wavelet in the
frequency domain.
On the other hand, our wavelet has regularity of the class
\(\mathcal{E}_{\sigma}(\mathbb{R})\) in both the time and frequency domains,
as a consequence of controlling \(m_0\) by the functions given in
\eqref{BazicnaFunkcija}.

To conclude this paper we compare our regularity with the one proposed by $N^{\sigma}_x$. We start with the following Remark.

\begin{rem}

Let $\displaystyle T(k)=\exp \left\{\rho\,\log^{\frac{\sigma}{\sigma-1}} k\right\},\, k>1,\,\sigma>1,$ be the function that describes the decay rate the decay of $\hat f$ in \eqref{Nx}. Then by the calculation done in \cite[Section 4.1]{Javier03}, it follows that $T(k)$ is associated function for $M_p=q^{p^{\sigma}}$, for suitable $q>1$ when $1<\sigma\leq 2$.
\end{rem}
%Note that simple inequality

%$$\tau p^{\sigma} \ln p > p^{\sigma} \ln q,\quad p> q^{1/\tau},\quad \tau>0,\,\, q,\sigma>1,$$ implies that there exist $p_0\in \mathbb N$ such that $M^{\tau,\sigma}_{p}> M_p$, $p\geq p_0$. Consequently, for any $1<\sigma\leq 2$ it holds $N_x^\sigma\subset \mathcal E_{\sigma}$. In that sense, we say that our wavelet constructed in Theorem \ref{MainResult} is less regular, since we do not have any restrictions on the parameter $\sigma>1$.

%In \cite[Theorem 4.4]{Fukuda}, authors constructed a wavelet that belong to $N_x^2$.

Now we can compare regularities in the following Lemma.

\begin{lema}
\label{PoslednjaLema}
    Let $N_x^\sigma$ be defined in \eqref{Nx}, and
    \begin{align*}
         \mathcal{E}_{\sigma} ^{\Gamma} 
         = \{ f\in L^2 (\mathbb R) \mid \hat{f}\in {{\mathbf{\Gamma}}}_{\sigma} (\mathbb R) \},
    \end{align*}
    for every $\sigma>1$, where ${\mathbf{\Gamma}}_{\sigma} (\mathbb R)$ is given in the Definition \ref{DefinicijaOpadanje}. Then
    \begin{align*}
        N_x^\sigma
        \subset
        \mathcal{E}_{\sigma} ^{\Gamma}
        \subset
        \mathcal{E}_{\sigma}.
    \end{align*}
    %where $\mathcal{E}_{\sigma}$ is given in \eqref{Epsilon_sigma}.
\end{lema}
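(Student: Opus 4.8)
The plan is to prove the two inclusions separately: the rightmost one, $\mathcal{E}_{\sigma}^{\Gamma}\subset\mathcal{E}_{\sigma}$, is an immediate consequence of the Paley--Wiener correspondence already in hand, while the leftmost one, $N_x^{\s}\subset\mathcal{E}_{\sigma}^{\Gamma}$, reduces to a comparison of the two decay rates, which is governed by the Lambert $W$ function.

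I would first dispose of $\mathcal{E}_{\sigma}^{\Gamma}\subset\mathcal{E}_{\sigma}$. If $f\in\mathcal{E}_{\sigma}^{\Gamma}$, then by definition $\widehat f\in{\mathbf{\Gamma}}_{\s}(\mathbb R)$, and part $b)$ of Theorem~\ref{Thm:Peli-Viner} (together with Remark~\ref{RemarkPeli}) yields at once $f\in\mathcal{E}_{\sigma}(\mathbb R)$. Hence this inclusion needs nothing beyond invoking the Paley--Wiener theorem.

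For $N_x^{\s}\subset\mathcal{E}_{\sigma}^{\Gamma}$, take $f\in N_x^{\s}$, so that there exist $\rho,C>0$ with $|\widehat f(\xi)|\leq C\exp\{-\rho\,\log^{\frac{\s}{\s-1}}|\xi|\}$ for large $|\xi|$. The heart of the matter is to compare the exponent $\log^{\frac{\s}{\s-1}}|\xi|$ appearing in $N_x^{\s}$ with the weight $g_{\s}(\xi)=\ln^{\frac{\s}{\s-1}}(1+|\xi|)\big/W^{\frac{1}{\s-1}}(\ln(1+|\xi|))$ from Definition~\ref{DefinicijaOpadanje}. Using $\log|\xi|\asymp\ln(1+|\xi|)$ and $W(x)\to\infty$ as $x\to\infty$ (recall $W(x)\sim\ln x$ by \eqref{PosledicaLambert1.5}), the quotient $g_{\s}(\xi)/\log^{\frac{\s}{\s-1}}|\xi|$ is, up to a positive constant, comparable to $W^{-\frac{1}{\s-1}}(\ln(1+|\xi|))$, which tends to $0$; hence
$$g_{\s}(\xi)\prec\log^{\frac{\s}{\s-1}}|\xi|,\qquad |\xi|\to\infty.$$
In particular there is $R>0$ with $g_{\s}(\xi)\leq\log^{\frac{\s}{\s-1}}|\xi|$ for $|\xi|\geq R$, so $|\widehat f(\xi)|\leq C e^{-\rho g_{\s}(\xi)}$ on that range. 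On the compact set $|\xi|\leq R$ both $\widehat f$ and $e^{-\rho g_{\s}(\xi)}$ are bounded and $e^{-\rho g_{\s}}$ is bounded below, so after enlarging the constant the estimate $|\widehat f(\xi)|\leq C' e^{-\rho g_{\s}(\xi)}$ holds for all $\xi$, i.e. $\widehat f\in{\mathbf{\Gamma}}_{\s}(\mathbb R)$ and $f\in\mathcal{E}_{\sigma}^{\Gamma}$.

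The genuine content, and the step I expect to be most delicate, is the rate comparison $g_{\s}\prec\log^{\frac{\s}{\s-1}}$: this is precisely where the Lambert factor $W^{-1/(\s-1)}$ separating the two weights enters, and it reflects the fact (noted in the Remark preceding this lemma) that $\log^{\frac{\s}{\s-1}}k$ is the associated function of $q^{p^{\s}}$, whereas $g_{\s}$ is that of the eventually larger sequence $p^{\t p^{\s}}$, so the ${\mathbf{\Gamma}}_{\s}$-decay is the slower one. A secondary technical point is the regularity of $\widehat f$: since the stated decay forces $\widehat f\in L^{1}$, one works with its continuous representative so that membership in ${\mathbf{\Gamma}}_{\s}$ is meaningful, and Lemma~\ref{TehnickaLema} may be used to absorb any polynomial factor when matching the two estimates.
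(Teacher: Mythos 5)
Your proposal is correct and follows essentially the same route as the paper's proof: the inclusion $\mathcal{E}_{\sigma}^{\Gamma}\subset\mathcal{E}_{\sigma}$ via Theorem~\ref{Thm:Peli-Viner}~b), and $N_x^{\s}\subset\mathcal{E}_{\sigma}^{\Gamma}$ via the observation that the divergence of $W^{\frac{1}{\s-1}}(\ln(1+|\xi|))$ makes $g_{\s}(\xi)$ eventually dominated by $\log^{\frac{\s}{\s-1}}|\xi|$, so the $N_x^{\s}$-decay implies the ${\mathbf{\Gamma}}_{\s}$-decay. The only cosmetic difference is that the paper fixes $|\xi_0|>\tfrac{1+\sqrt{5}}{2}$ and uses monotonicity of $W$ to produce the explicit constant $W^{\frac{1}{\s-1}}(\log|\xi_0|)/2^{\frac{\s}{\s-1}}$ relating the two weights, whereas you argue asymptotically that $g_{\s}\prec\log^{\frac{\s}{\s-1}}$; both give $|\widehat f(\xi)|\leq C e^{-\rho' g_{\s}(\xi)}$ for $|\xi|$ large, which is exactly where the paper's proof also stops.
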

\begin{proof}
    Let $\sigma>1$ and $|\xi_0|>\frac{1+\sqrt{5}}{2}$ be fixed. Then for $|\xi|>|\xi_0|$ we have
    \begin{align*}
        \log^{\frac{\sigma}{\sigma-1}} |\xi|
        &= W^{\frac{1}{\sigma-1}} (\log |\xi_0|) \frac{1}{W^{\frac{1}{\sigma-1}} (\log |\xi_0|)} \log^{\frac{\sigma}{\sigma-1}} |\xi|\\
       % &\geq \frac{W^{\frac{1}{\sigma-1}}(\log |\xi_0|)}{2^{\frac{\sigma}{\sigma-1}}} \frac{2^{\frac{\sigma}{\sigma-1}}\log^{\frac{\sigma}{\sigma-1}} |\xi|}{W^{\frac{1}{\sigma-1}}(\log |\xi|)}\\
        &\geq \frac{W^{\frac{1}{\sigma-1}}(\log |\xi_0|)}{2^{\frac{\sigma}{\sigma-1}}} \frac{\log^{\frac{\sigma}{\sigma-1}} (|\xi|^2)}{W^{\frac{1}{\sigma-1}}(\log (1+|\xi|))}\\
        &\geq \frac{W^{\frac{1}{\sigma-1}}(\log |\xi_0|)}{2^{\frac{\sigma}{\sigma-1}}} \frac{\log^{\frac{\sigma}{\sigma-1}} (1+|\xi|)}{W^{\frac{1}{\sigma-1}}(\log (1+|\xi|))},
    \end{align*}
    and therefore, if $f\in N_x^\sigma$ we conclude
    \begin{align*}
        |\hat{f}(\xi)|
        \leq C e^{-\rho\,\log^{\frac{\sigma}{\sigma-1}}|\xi|}
        \leq C e^{-\rho'g_{\sigma}(\xi)},
    \end{align*}
   for suitable $C,\rho'>0$ and $|\xi|$ large enough. This proves that $f\in \mathcal{E}_{\sigma} ^{\Gamma}$. Inclusion $\mathcal{E}_{\sigma} ^{\Gamma} \subset \mathcal{E}_{\sigma}$ follows directly from the Theorem \ref{Thm:Peli-Viner}.
\end{proof}

\subsection{Illustrations}\label{secIlustracije}
In this section, we present illustrations of the low-pass filter constructed in the proof of Theorem~\ref{MainResult}, together with the corresponding scaling function and wavelet. All figures were generated in \textsc{Matlab} (the code is available at https://sites.google.com/view/goalsproject/publications). The parameters used in \eqref{theta} and \eqref{NasLowPass} are $\sigma = 2$ and $d = \frac{\pi}{12}$.

\begin{figure}[H]
     \centering
     \includegraphics[scale=0.4]{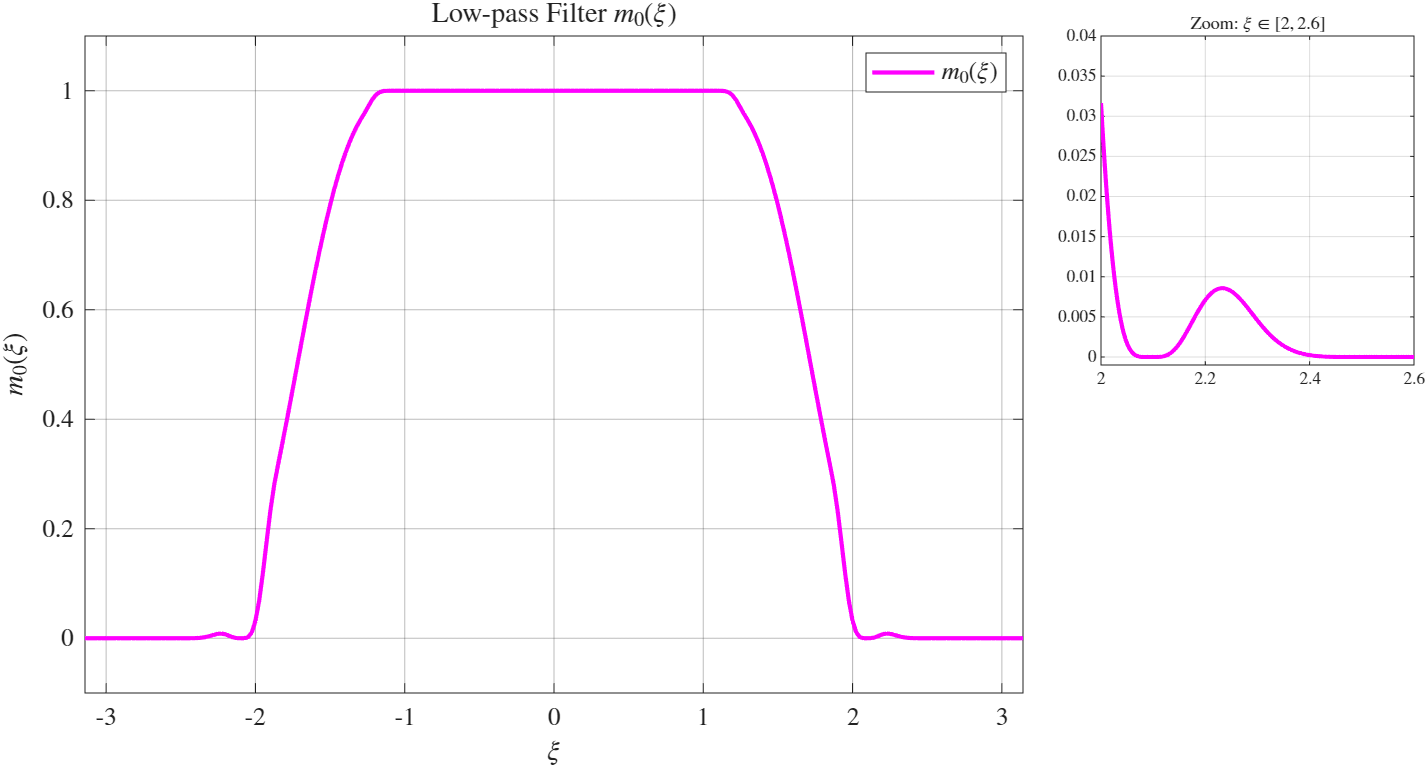}
     \caption{The plot of the low-pass filter \(m_0\) given in \eqref{NasLowPass}. Note that it features a small bump near \(\tfrac{2\pi}{3}\) (see the smaller piece), that enables control of its local decay.
}
     %\label{wavelet}
\end{figure}

\begin{figure}[H]
     \centering
     \includegraphics[scale=0.4]{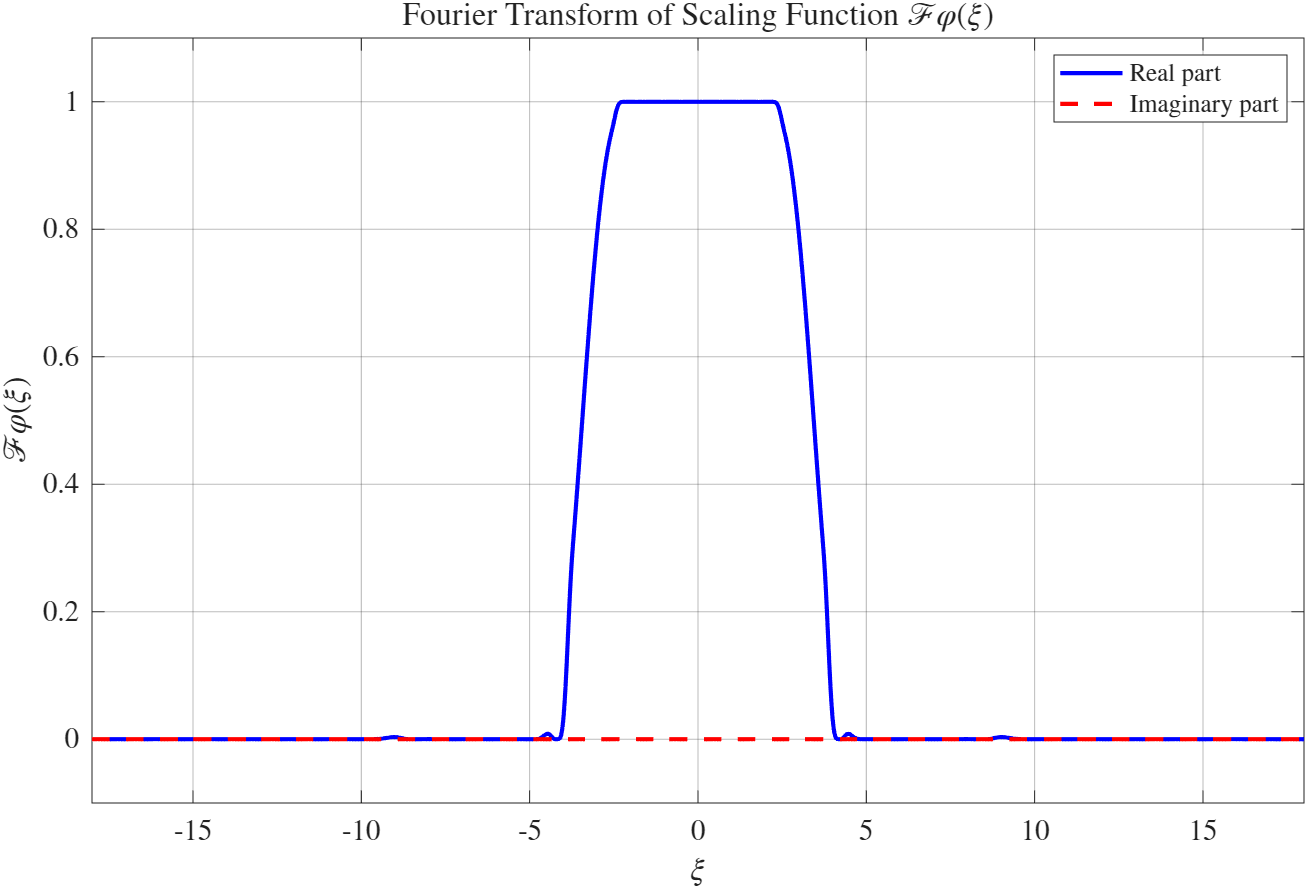}
     \caption{Plot of the Fourier transform of the scaling function; $\hat\varphi$. }
     %\label{wavelet}
\end{figure}

\begin{figure}[H]
     \centering
     \includegraphics[scale=0.4]{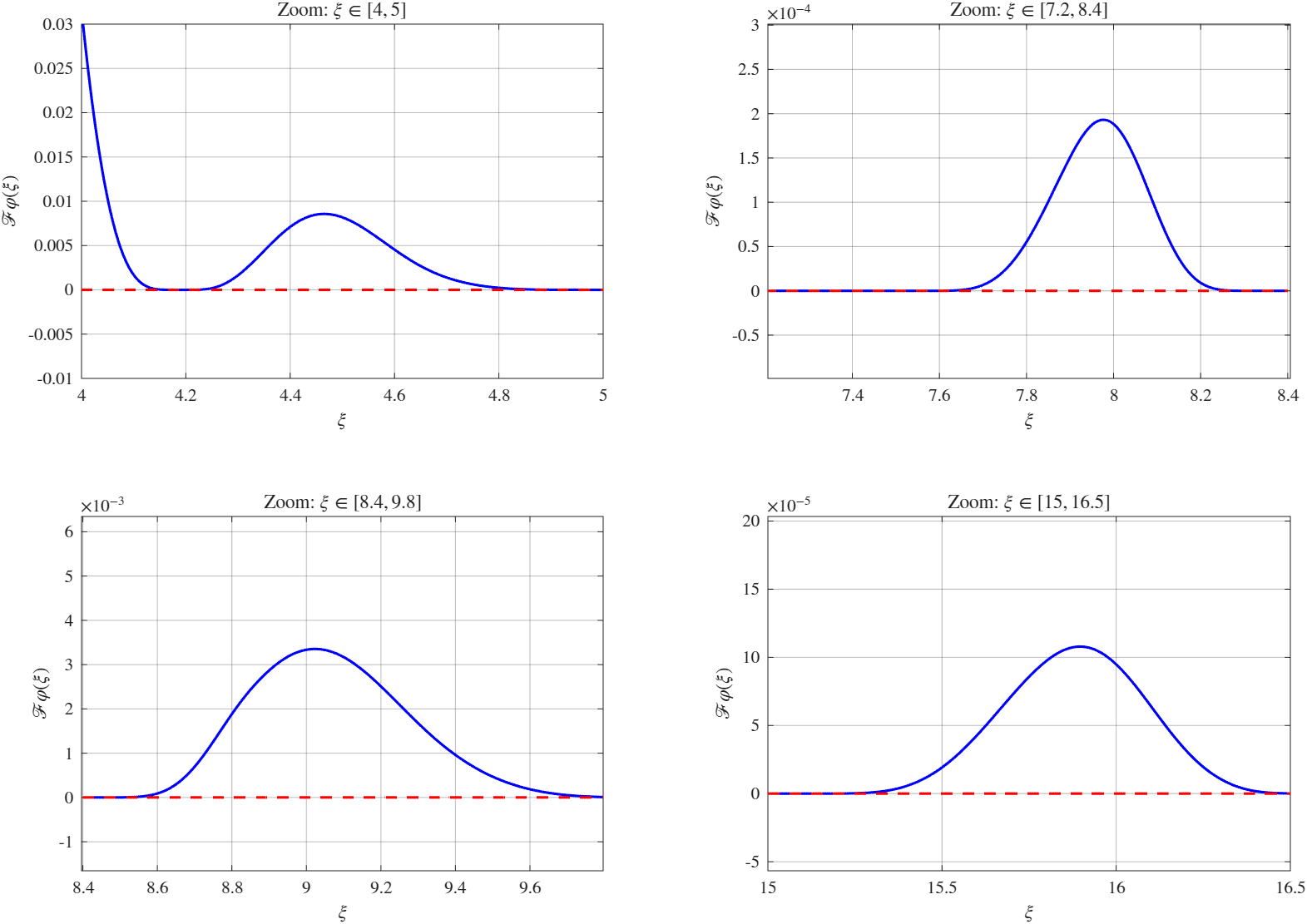}
     \caption{Note that \(\hat{\varphi}\) inherits the bumps of \(m_0\) near the invariant cycle points. Therefore \(\hat{\varphi}\) is not band-limited.
}
     %\label{wavelet}
\end{figure}

\begin{figure}[H]
     \centering
     \includegraphics[scale=0.4]{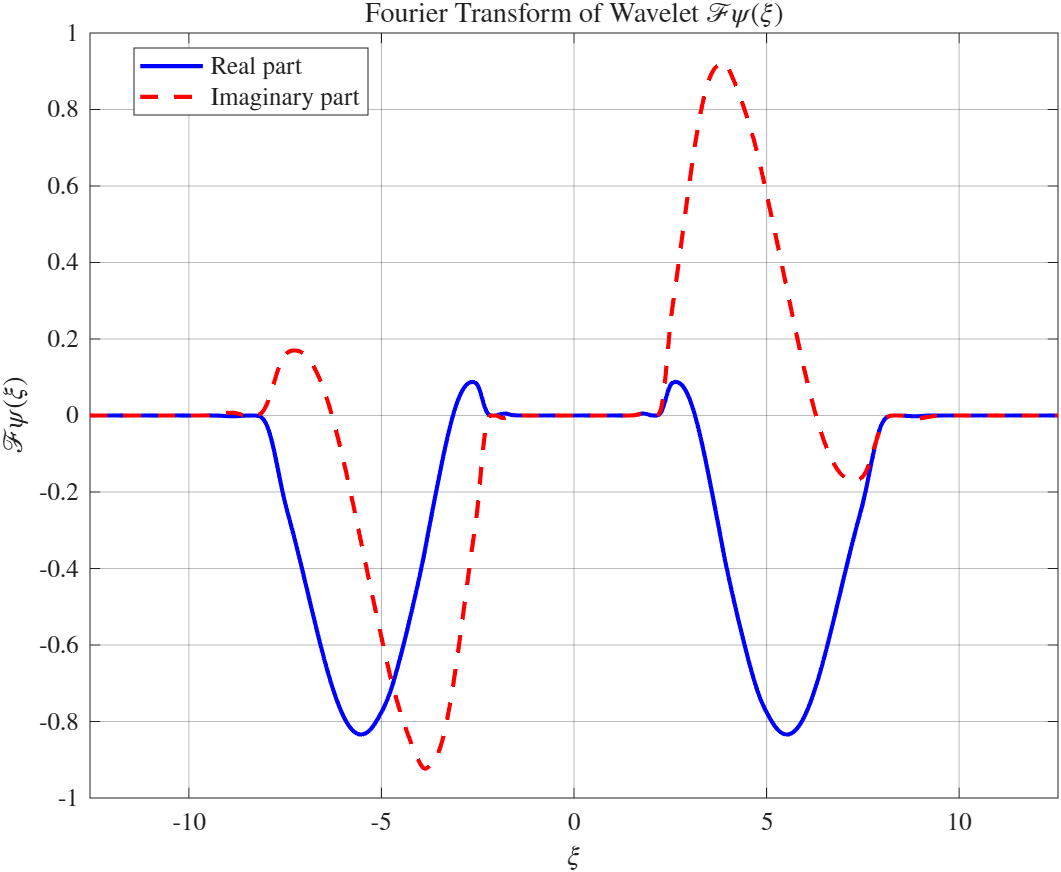}
     \caption{Plot of  Fourier transform of the wavelet; $\hat\psi$}
     %\label{wavelet}
\end{figure}

\begin{figure}[H]
     \centering
     \includegraphics[scale=0.4]{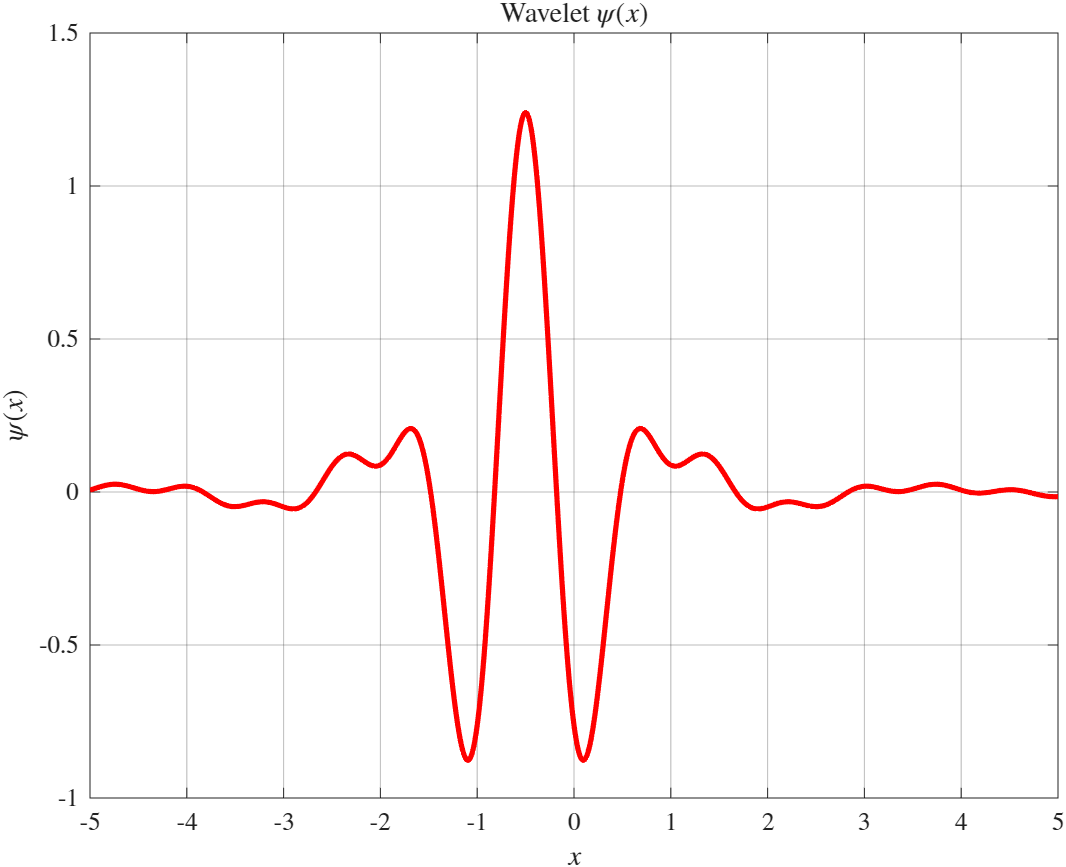}
     \caption{Plot of the wavelet $\psi$}
     %\label{wavelet}
\end{figure}

\section{Appendix}
\begin{proof}[{\bf Proof of Lemma \ref{LemmaFlat}}]

 %Since $f_{\rho_{\sigma},\sigma}$ is even on $\mathbb R$ we only consider its restriction to $\mathbb R_+$. 
 
 We prove that $\displaystyle \lim_{x\to 0^+} f_{\rho_{\sigma},\sigma} ^{(j)} \left( x \right) = 0$, where $f_{\rho_{\sigma},\sigma}$ is given in \eqref{PrimerExtended}.

For $x>0$ and $j\in\mathbb N$ we have
 \begin{equation}
\label{ntiIzvodf_sigma}
    f_{\rho_{\sigma},\sigma} ^{(j)} \left(x\right)
    = \left(e^{-\rho_{\sigma} g_{\sigma} \left(\frac{1}{x}\right) }\right)^{(j)}
    = \left( \left( e^{-\rho_{\sigma}\,\cdot\,} \circ \omega_{\sigma} (\,\cdot\,)\circ \ln\left( 1+\frac{1}{\cdot}\right) \right) (x) \right)^{(j)},
\end{equation} where $g_{\sigma}$ and $\omega_{\sigma}$ are given in \eqref{BazicnaFunkcija}.

Note that
\begin{equation}
\label{IzvodiLN}
\left(\ln \left(1+\frac{1}{x}\right)\right)^{(n)} = (-1)^{n-1} (n-1)! \left( \frac{1}{(x+1)^n} - \frac{1}{x^n} \right),\quad x>0,\; n\in\mathbb N.
\end{equation} 
%By the use of the Fa\'a di Bruno formula, we obtain
%\begin{align}
%\label{ntiizvode}
  %  \left( e^{-\rho_{\sigma}\omega_{\sigma} (x)} \right)^{(p)}
   % &= \sum_{\substack{m_1,\dots , m_p\in\mathbb{N} \\ m_1+2m_2+\dots+pm_p=p}} \frac{e^{-\rho_{\sigma} \omega_{\sigma} (x)} p! }{m_1! m_2!\dots m_p!} 
   % \prod_{n=1}^p \frac{(- \rho_{\sigma})^{m_n}}{n!^{m_n}} (\omega_{\sigma} ^{(n)}(x))^{m_n},
%\end{align}
%for all $x>0,\,p\in \mathbb N$.

Let us compute the $n$-th derivative of $\omega_{\sigma} (x)$. Using Leibniz product formula we obtain
\begin{align*}
    \omega_{\sigma}^{(n)} (x)
    &= \sum_{k=0}^n \binom{n}{k} \left(x^{\frac{\sigma}{\sigma-1}}\right)^{(n-k)} \left( \frac{1}{W^{\frac{1}{\sigma -1}}(x)} \right)^{(k)} \\
    &= \sum_{k=0}^n \binom{n}{k} \prod_{l=0}^{n-k-1}\left( \frac{\sigma}{\sigma - 1} - l \right) x^{\frac{\sigma}{\sigma-1}-n+k} \left( \frac{1}{W^{\frac{1}{\sigma -1}}(x)} \right)^{(k)},\,x>0,\, n\in\mathbb{N}.
\end{align*}
By Fa\'a di Bruno formula and (W4), for $0\leq k \leq n$ we have 
\begin{align*}
    &\left( \frac{1}{W^{\frac{1}{\sigma -1}}(x)} \right)^{(k)}
    = \sum_{\substack{m_1,\dots , m_k\in\mathbb{N} \\ m_1+2m_2+\dots+km_k=k}} \frac{k! (-1)^{m_1+m_2+\dots+m_k} }{m_1 ! m_2! \dots m_k!} \times \\
    &\times \prod_{j=0}^{m_1+\dots+m_k-1} \left(\frac{1}{\sigma -1} + j\right)
     \frac{1}{W(x)^{\frac{1}{\sigma -1} + m_1+\dots + m_k}}
     \prod_{s=1}^{k} \frac{1}{s!^{m_s}} \left( \frac{W^s (x) p_{s}(W(x)) }{ x^s (1+W(x))^{2s-1}} \right)^{m_s}\\
    &= \frac{1}{x^k} \frac{W(x)^{k-\frac{1}{\sigma -1}}}{(1+W(x))^{2k}} \sum_{\substack{m_1,\dots , m_k\in\mathbb{N} \\ m_1+2m_2+\dots+km_k=k}} A_k (m_1,\dots ,m_k) \left( \frac{1+W(x)}{W(x)} \right)^{m_1+\dots+m_k} \times \\
    & \times \prod_{s=1}^{k} (p_s (W(x)))^{m_s},\quad x>0,
\end{align*}
where
$$A_k (m_1,\dots ,m_k) = \frac{(-1)^{m_1+m_2+\dots+m_k} k!}{m_1 ! m_2! 2!^{m_2}  \dots m_k!k!^{m_k}} \prod_{j=0}^{m_1+\dots+m_k-1} \left(\frac{1}{\sigma - 1} + j\right).$$
Therefore,
\begin{align}
\label{IzvodOmega}
    &\omega_{\sigma}^{(n)} (x) = \sum_{k=0}^n a_{k,n}(\sigma) \, x^{\frac{\sigma}{\sigma-1}-n+k} \frac{1}{x^k} \frac{W^{k-\frac{1}{\sigma-1} }(x)}{(1+W(x))^{2k}} \nonumber\\
    & \quad \sum_{\substack{m_1,\dots , m_k\in\mathbb{N} \\ m_1+2m_2+\dots+km_k=k}} A_k (m_1,\dots ,m_k) \left( \frac{1+W(x)}{W(x)} \right)^{m_1+\dots+m_k}\nonumber 
      \prod_{s=1}^{k} (p_s (W(x)))^{m_s}\nonumber\\
    &= x^{\frac{\sigma}{\sigma-1}-n} \sum_{k=0}^n \sum_{\substack{m_1,\dots , m_k\in\mathbb{N} \\ m_1+2m_2+\dots+km_k=k}} a_{k,n}(\sigma) A_k (m_1,\dots ,m_k) \, \frac{W^{k-\frac{1}{\sigma-1} }(x)}{(1+W(x))^{2k}} \times \nonumber\\
    & \quad \times \left( \frac{1+W(x)}{W(x)} \right)^{m_1+\dots+m_k} \prod_{s=1}^{k} (p_s (W(x)))^{m_s},\quad x>0,\;n\in \mathbb N,
\end{align}
where $a_{k,n}(\sigma) = \binom{n}{k} \prod_{l=0}^{n-k-1}\left( \frac{\sigma}{\sigma - 1} - l \right)$. 
%where $C=(e^{-1/\sigma})^n$.

To avoid further technicalities, we observe that \eqref{IzvodiLN},  \eqref{IzvodOmega}, and successive application of the Fa\'a di Bruno formula, imply that \eqref{ntiIzvodf_sigma} can be written as (finite) sum of terms
\begin{align}
\label{fsigmaT}
\alpha_{\sigma} e^{-\rho_{\sigma} g_{\sigma} \left(\frac{1}{x}\right)} \left(\ln \left( 1+\frac{1}{x} \right)\right)^{a_\sigma} \frac{ W^{b_{\sigma}} \left(\ln \left( 1+\frac{1}{x} \right)\right) }{ \left( 1+W\left(\ln \left( 1+\frac{1}{x} \right)\right) \right)^c } \frac{1}{x^{d'} (x+1)^{d''} } , \quad x>0,
\end{align}
for suitable $\alpha_{\sigma}, a_{\sigma} , b_{\sigma} \in \mathbb{R} , \, c, d' , d'' \geq 0$, where we also recall that $p_s (W(x))$ appearing in \eqref{IzvodOmega} is polynomial with respect to $W(x)$.

Setting $\displaystyle s=\ln \left( 1+\frac{1}{x} \right)$ in \eqref{fsigmaT} and using \eqref{PosledicaLambert1.5}, we obtain
\begin{align}
\label{Sumandi} 
e^{-\rho_{\sigma}\omega_{\sigma} (s)} s^{a_{\sigma}} \frac{W^{b_{\sigma}} (s)}{(1+W(s))^c} \frac{(e^s-1)^{d'+d''}}{(e^s)^{d''}}
    \sim  e^{-\rho_{\sigma}\omega_{\sigma} (s)} s^{a_{\sigma}} \ln^{b_{\sigma}-c}(s) e^{d's},    
\end{align} when $s\to \infty$.

Note that there exist constants $\beta , d_0 > 0$ such that
 $$|s^{a_{\sigma}} \ln^{b_{\sigma}-c}(s) e^{d's}| \leq \beta e^{d_0 s},\quad s>0,$$ hence Lemma \ref{spanskalema} $b)$ and $\eqref{Sumandi}$ imply that each term of the form \eqref{fsigmaT} tends to 0 when $s\to+\infty$. Finally, this proves $\lim_{x\to 0^+} f_{\rho_{\sigma},\sigma} ^{(j)} ( x ) = 0$.
 \end{proof}

 \section{Acknowledgments}

The authors thank N. Teofanov for helpful discussions.
This research was supported by the Science Fund of the Republic of
Serbia, $\#$GRANT No. 2727, {\it Global and local analysis of operators and
distributions} - GOALS.  The authors were also supported by the Ministry of Science, Technological Development, and Innovation of the Republic of Serbia - the second and third authors by Grants No. 451-03-137/2025-03/200125 and 451-03-136/2025-03/200125, and the first author by Grant No. 451-03-136/2025-03/200156.

\end{document}